\algnewcommand\algorithmicinput{\textbf{INPUT: }}
\algnewcommand\Input{\item[\algorithmicinput]}
\algnewcommand\algorithmicoutput{\textbf{OUTPUT: }}
\algnewcommand\Output{\item[\algorithmicoutput]}
\numberwithin{equation}{section}
\providecommand{\customgenericname}{}
\newcommand{\newcustomtheorem}[2]{%
  \newenvironment{#1}[1]
  {%
   \renewcommand\customgenericname{#2}%
   \renewcommand\theinnercustomgeneric{##1}%
   \innercustomgeneric
  }
  {\endinnercustomgeneric}
}
\newtheorem{definition}{Definition}
\newtheorem{theorem}{Theorem}
\newtheorem{proposition}{Proposition}
\newtheorem{lemma}{Lemma}
\newtheorem{remark}{Remark}
\newcommand{\rset}{\mathbb{R}}
\newcommand{\ind}{\mathbf{1}}
\newcommand{\e}{\mathbb{E}}
\newcommand{\lp}{\mathrm{L}}
\newcommand{\m}{\mathcal}
\newcommand\sFor[2]{ \For{#1}#2\EndFor} 
\begin{document}
\title[Mean reflected SDEs with jumps]{Mean Reflected Stochastic Differential
  Equations with jumps\\}
\author[Ph. Briand]{Philippe Briand}
\address{Univ. Grenoble Alpes, Univ. Savoie Mont Blanc, CNRS, LAMA, 73000 Chambéry, France}
\email{philippe.briand@univ-smb.fr}

\author[A. Ghannoum]{Abir Ghannoum}
\address{Univ. Grenoble Alpes, Univ. Savoie Mont Blanc, CNRS, LAMA, 73000 Chambéry, France}
\address{Univ. Libanaise, LaMA-Liban, P.O. Box 37, Tripoli, Liban}
\email{abir.ghannoum@univ-smb.fr}

\author[C. Labart]{Céline Labart}
\address{Univ. Grenoble Alpes, Univ. Savoie Mont Blanc, CNRS, LAMA, 73000 Chambéry, France}
\email{celine.labart@univ-smb.fr}

\date{March 23, 2018}

\begin{abstract}
This paper is devoted to the study of reflected Stochastic Differential
Equations with jumps when the constraint is not on the paths of the solution
but acts on the law of the solution. This type of reflected equations have
been introduced recently by Briand, Elie and Hu \cite{BEH16} in
the context of BSDEs, when no jumps occur. In \cite{BCGL17}, the
authors study a numerical scheme based on particle systems to approximate
these reflected SDEs. In this paper, we prove existence and uniqueness of
solutions to this kind of reflected SDEs with jumps and we generalize
the results obtained in \cite{BCGL17} to this context.

\end{abstract}

\maketitle

\section{Introduction}

Reflected stochastic differential equations have been introduced in the
pionneering work of Skorokhod (see \cite{Sko61}), and their numerical approximations by Euler
schemes have been widely studied (see \cite{Slo94}, \cite{Slo01},
\cite{Lep95}, \cite{Pet95}, \cite{Pet97}). Reflected stochastic differential
equations driven by a Lévy process have also been studied in
the literature (see \cite{MR85}, \cite{KH92}). More recently, reflected
backward stochastic differential equations with jumps have been introduced and studied
(see \cite{HO03}, \cite{EHO05}, \cite{HH06}, \cite{Ess08}, \cite{CM08}, \cite{QS14}), as well as their
numerical approximation (see \cite{DL16a} and \cite{DL16b}). The main
particularity of our work comes from the fact that the constraint acts on the law of
the process $X$ rather than on its paths. The study of such equations is
linked to the mean field games theory, which has been introduced by Lasry and
Lions (see \cite{LL07a}, \cite{LL07b}, \cite{LL06a}, \cite{LL06b}) and whose probabilistic point of view is studied in \cite{CD18a}
and \cite{CD18b}. Stochastic differential equations
with mean reflection have been
introduced by Briand, Elie and Hu in their backward forms in \cite{BEH16}. In
that work, they show that mean reflected stochastic processes exist
and are uniquely defined by the associated system of equations of the following form:
\begin{equation}\label{eq:main2}
	\begin{cases}
	\begin{split}
	& X_t  =X_0+\int_0^t b(X_{s}) ds + \int_0^t \sigma(X_{s}) dB_s  + K_t,\quad t\geq 0, \\
	& \e[h(X_t)] \geq 0, \quad \int_0^t \e[h(X_s)] \, dK_s = 0, \quad t\geq 0.
	\end{split}
	\end{cases}
  \end{equation}
  Due to the fact that the reflection process $K$ depends on the law of the
  position, the authors of \cite{BCGL17}, inspired by mean field games, study the convergence of a numerical scheme based
  on particle systems to compute numerically solutions to \eqref{eq:main2}.\\

  In
this paper, we extend previous results to the case of jumps, i.e. we study existence and
uniqueness of solutions to the following mean reflected stochastic
differential equation (MR-SDE in the sequel)

\begin{equation}\label{eq:main}
	\begin{cases}
	\begin{split}
	& X_t  =X_0+\int_0^t b(X_{s^-}) ds + \int_0^t \sigma(X_{s^-}) dB_s  + \int_0^t\int_E F(X_{s^-},z) \tilde{N}(ds,dz) + K_t,\quad t\geq 0, \\
	& \e[h(X_t)] \geq 0, \quad \int_0^t \e[h(X_s)] \, dK_s = 0, \quad t\geq 0,
	\end{split}
	\end{cases}
\end{equation}
where $E=\mathbb{R}^*$, $\tilde{N}$ is a compensated Poisson measure
$\tilde{N}(ds,dz)=N(ds,dz)-\lambda(dz)ds$, and $B$ is a Brownian process
independent of $N$. We also propose a numerical scheme
based on a particle system to compute numerically solutions to \eqref{eq:main}
and study the rate of convergence of this scheme.
\medskip

Our main motivation for studying~\eqref{eq:main} comes from financial problems
submitted to risk measure constraints. Given any position $X$, its risk
measure $\rho(X)$ can be seen as the amount of own fund needed by the investor
to hold the position. For example, we can consider the following risk measure:
$\rho(X) = \inf\{m:\ \mathbb{E}[u(m+X)]\geq p\}$ where $u$ is a utility
function (concave and increasing) and $p$ is a given threshold (we refer the
reader to~\cite{ADEH99} and to~\cite{FS02} for more details on risk measures). Suppose that we are given a portfolio $X$ of assets whose dynamic, when
there is no constraint, follows the jump diffusion model
\begin{equation*}
d X_t = b(X_t) d t + \sigma(X_t) d B_t +\int_E F(X_{t-},z) \tilde{N}(dt,dz), \qquad t\geq 0.
\end{equation*}
 Given a risk measure $\rho$, one can
ask that $X_t$ remains an acceptable position at each time $t$. The constraint rewrites $\e \left[h(X_t)\right] \geq 0$ for $t\geq 0$ where $h=u-p$.

In order to satisfy this constraint, the agent has to add some cash in the portfolio through the time and the dynamic of the wealth of the portfolio becomes 
\begin{equation*}\label{eq:exempleport2}
  d X_t = b(X_t) d t + \sigma(X_t) d B_t +\int_E F(X_{t-},z) \tilde{N}(dt,dz)+d K_t, \qquad t\geq 0,
\end{equation*}
where $K_t$ is the amount of cash added up to time $t$ in the portfolio to balance the "risk" associated to $X_t$. Of course, the agent wants to cover the risk in a minimal way, adding cash only when needed: this leads to the Skorokhod condition $\e[h(X_t)] d K_t = 0$. Putting together all conditions, we end up with a dynamic of the form \eqref{eq:main} for the portfolio.

\medskip

The paper is organized as follows. In Section \ref{sec:EU},
we show that, under Lipschitz assumptions on $b$, $\sigma$ and $F$ and
bi-Lipchitz assumptions on $h$, the system admits a unique strong solution,
\emph{i.e.} there exists a unique pair of process $(X,K)$ satisfying system
\eqref{eq:main} almost surely, the process $K$ being an increasing and
deterministic process. Then, we show that, by adding some regularity on the
function $h$, the Stieltjes measure $dK$ is absolutely continuous with respect
to the Lebesgue measure and we obtain the explicit expression of its density. In Section \ref{sec:PMRSDE} we show that the system \eqref{eq:main} can be seen
as the limit of an interacting particles system with oblique reflection of
mean field type. This result allows to define in Section \ref{sec:NSMRSDE} an
algorithm based on this interacting particle system together with a classical
Euler scheme which gives a strong approximation of the solution of
\eqref{eq:main}. When $h$ is bi-Lipschitz, this leads to an approximation error in $L^2$-sense proportional to $n^{-1}
+ N^{-\frac{1}{2}}$, where $n$ is the number of points of
the discretization grid and $N$ is the number of particles. When $h$ is
smooth, we get an approximation error proportional to $n^{-1}
+ N^{-1}$. By the way, we improve the speed of convergence obtained in
\cite{BCGL17}. Finally, we illustrate these results numerically in Section \ref{sec:NI}.

\section{Existence, uniqueness and properties of the solution.}\label{sec:EU}
Throughout this paper, we consider the following set of assumptions.
\begin{customassumption}{(A.1)}
    \label{lip}
    {\color{white} \rule{\linewidth}{0.5mm} }
    (i) Lipschitz assumption: There exists a constant $C_p>0$, such that for all $x,x'\in\mathbb{R}$ and $p>0$, we have
    \begin{equation*}
    \mid b(x)-b(x')\mid^p+\mid \sigma(x)-\sigma(x')\mid^p+\int_E\mid F(x,z)-F(x',z)\mid^p\lambda(dz)\leq C_p\mid x-x'\mid^p.
    \end{equation*}
    (ii) The random variable $X_0$ is square integrable independent of $B_t$ and $N_t$.
\end{customassumption}

\begin{customassumption}{(A.2)}
    \label{bilip}
    {\color{white} \rule{\linewidth}{0.5mm} }
    (i) The function $h:\rset \longrightarrow \rset$ is an increasing function and there exist $0<m\leq M$ such that $$\forall x\in\rset,~\forall y\in\rset,~m|x-y|\leq|h(x)-h(y)|\leq M|x-y|.$$
    (ii) The initial condition $X_0$ satisfies: $\e [h(X_0)]\geq0$.
\end{customassumption}

\begin{customassumption}{(A.3)}
    \label{int}
    $\exists p>4$ such that $X_0$ belongs to $\lp^p$: $\e [|X_0|^p]<\infty$.
\end{customassumption}

\begin{customassumption}{(A.4)}
    \label{reg}
    The mapping $h$ is a twice continuously differentiable function with bounded derivatives.
\end{customassumption}

\subsection{Preliminary results}

Define the function
\begin{equation}
\label{H}
{H}    : \mathbb{R} \times \mathcal{P}(\mathbb{R}) \ni  (x,\nu) \mapsto \int h(x+z) \nu(dz),
\end{equation}
and the inverse function in space of $H$ evaluated at $0$, namely:
\begin{equation}
\bar{G}_0    : \mathcal{P}(\mathbb{R}) \ni \nu \mapsto \inf \{x  \in \mathbb{R} : H(x,\nu) \ge 0 \} , 
\end{equation}
as well as ${G}_0$, the positive part of $\bar{G}_0$:
\begin{equation}
{G}_0    : \mathcal{P}(\mathbb{R}) \ni \nu \mapsto \inf \{x  \ge 0 : H(x,\nu) \ge 0 \}. 
\end{equation}

We start by studying some properties of $H$ and $G_0$.

\begin{lemma}
Under \ref{bilip}, we have:
\begin{enumerate}
\item[(i)] For all $\nu$ in $\mathcal{P}(\mathbb{R})$, the mapping $H(\cdot,\nu): \mathbb{R}\ni x \mapsto H(x,\nu)$ is a bi-Lipschitz function, namely: 
    
    \begin{equation}
    \label{Hbilip}
      \forall x,y \in \mathbb{R}, m|x-y| \le |H(x,\nu)-H(y,\nu)|\le M|x-y|.
    \end{equation}
    
\item[(ii)]  For all $x$ in $\mathbb{R}$, the mapping $H(x,\cdot) : \mathcal{P}(\mathbb{R})\ni \nu \mapsto H(x,\nu)$ satisfies the following Lipschitz estimate:
    \begin{equation}
    \label{Hlip}
 \forall \nu,\nu' \in \mathcal{P}(\mathbb{R}),  |H(x,\nu)-H(x,\nu')|\le \left|\int h(x+\cdot) (d\nu-d\nu')\right|.
    \end{equation}
 \end{enumerate}   
\end{lemma}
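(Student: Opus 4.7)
The proof is essentially a direct computation from the definition of $H$ in \eqref{H}, using the bi-Lipschitz property of $h$ and the fact that $\nu$ is a probability measure. I would handle the two items separately.

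For part (i), I would start by writing
\[
H(x,\nu)-H(y,\nu) = \int \bigl[h(x+z)-h(y+z)\bigr]\,\nu(dz).
\]
The key observation is that, since $h$ is increasing by \ref{bilip}(i), the integrand $h(x+z)-h(y+z)$ has constant sign in $z$ (the sign of $x-y$). Therefore the absolute value can be moved inside the integral:
\[
|H(x,\nu)-H(y,\nu)| = \int \bigl|h(x+z)-h(y+z)\bigr|\,\nu(dz).
\]
Applying the bi-Lipschitz bound $m|x-y|\le |h(x+z)-h(y+z)|\le M|x-y|$ pointwise in $z$ and using $\nu(\mathbb{R})=1$ yields \eqref{Hbilip}.

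For part (ii), the estimate is immediate: by linearity of the integral in the measure variable,
\[
H(x,\nu)-H(x,\nu') = \int h(x+z)\,\nu(dz)-\int h(x+z)\,\nu'(dz) = \int h(x+\cdot)\,(d\nu-d\nu'),
\]
so taking absolute values gives exactly \eqref{Hlip}.

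There is no real obstacle here: the only mildly delicate point is the step in (i) where the monotonicity of $h$ is used to exchange $|\cdot|$ and the integral, which is needed to obtain the lower bound with the constant $m$ (a crude triangle-inequality argument would only yield the upper bound). Once this observation is made, both inequalities follow from the hypotheses on $h$ and the fact that $\nu$ is a probability measure.
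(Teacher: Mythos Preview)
Your proposal is correct and matches the paper's approach: the paper simply states that the proof is straightforward from the definition of $H$, and your argument is precisely the natural unpacking of that remark. The one point you highlight---using the monotonicity of $h$ to pass the absolute value inside the integral in part (i)---is indeed the only step requiring a moment's thought, and you handle it correctly.
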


\begin{proof}
The proof is straightforward from the definition of $H$ (see $\eqref{H}$).
\end{proof}

Note that thanks to Monge-Kantorovitch Theorem, assertion $\eqref{Hlip}$ implies that for all $x$ in $\mathbb{R}$, the function $H(x,\cdot)$ is Lipschitz continuous w.r.t. the Wasserstein-1 distance. Indeed, for two probability measures $\nu$ and $\nu'$, the Wasserstein-1 distance between $\nu$ and $\nu'$ is defined by:
$$W_1(\nu,\nu')=\sup_{\varphi~1-Lipschitz}\bigg|\int\varphi(d\nu-d\nu')\bigg|=\inf_{X\sim\nu~;~Y\sim\nu'}\e[|X-Y|].$$
Therefore
\begin{equation}
\label{Hwiss}
\forall\nu,\nu'\in\mathcal{P}(\mathbb{R}),~|H(x,\nu)-H(x,\nu')|\leq MW_1(\nu,\nu').
\end{equation}
Then, we have the following result about the regularity of $G_0$:
\begin{lemma}
\label{wiss}
Under \ref{bilip}, the mapping ${G}_0 :\mathcal{P}(\mathbb{R}) \ni \nu \mapsto {G}_0(\nu)$ is Lipschitz-continuous in the following sense:
\begin{equation}
|{G}_0(\nu)-{G}_0( \nu') | \le {1\over{m}}\left|\int h(\bar{G}_0(\nu)+\cdot) (d\nu-d\nu')\right|,
\end{equation}
where $\bar{G}_0(\nu)$ is the inverse of $H(\cdot,\nu) $ at point $0$. In particular
\begin{equation}
|{G}_0(\nu)-{G}_0( \nu') | \le {M\over{m}}W_1(\nu, \nu').
\end{equation}
\end{lemma}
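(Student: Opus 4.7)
The plan is to derive the bound by comparing the two equations $H(\bar{G}_0(\nu),\nu)=0$ and $H(\bar{G}_0(\nu'),\nu')=0$, which characterize $\bar{G}_0$ as the unique zero of $H(\cdot,\nu)$ in the first variable. Existence and uniqueness of this zero come from the bi-Lipschitz estimate~\eqref{Hbilip}: since $m>0$, the map $H(\cdot,\nu)$ is continuous and strictly increasing, and (using that $h$ itself is bi-Lipschitz) $H(x,\nu)\to\pm\infty$ as $x\to\pm\infty$, so the infimum defining $\bar{G}_0(\nu)$ is actually attained and $H(\bar{G}_0(\nu),\nu)=0$.

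The key algebraic step will be the identity
\begin{equation*}
H(\bar{G}_0(\nu),\nu') - H(\bar{G}_0(\nu'),\nu') \;=\; H(\bar{G}_0(\nu),\nu') - H(\bar{G}_0(\nu),\nu),
\end{equation*}
which is valid because both $H(\bar{G}_0(\nu'),\nu')$ and $H(\bar{G}_0(\nu),\nu)$ vanish. Applying the lower bound in~\eqref{Hbilip} (with the measure $\nu'$) to the left-hand side and the measure Lipschitz bound~\eqref{Hlip} to the right-hand side gives
\begin{equation*}
m\,|\bar{G}_0(\nu)-\bar{G}_0(\nu')| \;\le\; \left|\int h(\bar{G}_0(\nu)+\cdot)\,(d\nu-d\nu')\right|.
\end{equation*}

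To pass from $\bar{G}_0$ to $G_0$, I would use that $G_0(\nu)=\bar{G}_0(\nu)^+$ (immediate from the two definitions together with the fact that $H(\cdot,\nu)$ is strictly increasing), and that $x\mapsto x^+$ is $1$-Lipschitz, yielding $|G_0(\nu)-G_0(\nu')|\le|\bar{G}_0(\nu)-\bar{G}_0(\nu')|$. Combined with the inequality above, this gives the first claim. The Wasserstein estimate then follows by plugging~\eqref{Hwiss} into the right-hand side.

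The main obstacle, which is really just bookkeeping, is making sure $\bar{G}_0$ is a genuine zero of $H(\cdot,\nu)$ so that the cancellation trick above is legitimate, and verifying the identification $G_0=\bar{G}_0^+$ so that the Lipschitz constant is preserved when passing from $\bar{G}_0$ to $G_0$. Everything else is a direct application of the two parts of the previous lemma.
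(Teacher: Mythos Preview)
Your argument is correct and is exactly the standard proof: the paper does not reproduce it here but simply refers to \cite{BCGL17}, Lemma~2.5, whose content is precisely the cancellation trick you describe (the same computation is in fact carried out later in this paper, in the proof of Theorem~\ref{cv_part}~(ii), when showing that $s\mapsto\bar{G}_0(\mu_s)$ is locally Lipschitz). Nothing to add.
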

\begin{proof}
The proof is given in (\cite{BCGL17}, Lemma 2.5).
\end{proof}

\subsection{Existence and uniqueness of the solution of \eqref{eq:main}}

We emphasize that existence and uniqueness results hold only under \ref{lip} which is the standard assumption for SDEs and \ref{bilip} which is the assumption used in \cite{BEH16}. The convergence of particles systems requires only an additional integrability assumption on the initial condition, namely \ref{int}. We sometimes add the smoothness assumption \ref{reg} on $h$ in order to improve some of the results.

We first recall the existence and uniqueness result of  in the case of SDEs.

\begin{definition}
A couple of processes $(X,K)$ is said to be a flat deterministic solution to \eqref{eq:main} if $(X,K)$ satisfy \eqref{eq:main} with $K$ being a non-decreasing deterministic function with $K_0=0$.
\end{definition}
Given this definition we have the following result.

\begin{theorem}
\label{thrm_exacte}
Under Assumptions $\ref{lip}$ and $\ref{bilip}$, the mean reflected SDE \eqref{eq:main} has a unique deterministic flat solution $(X, K)$. Moreover,
\begin{equation}
\label{K_t}
\forall t\geq 0,~K_t=\sup_{s\leq t} \inf\{x\geq0:\e[h(x+U_s)] \geq 0\}=\sup\limits_{s\le t} {G}_0(\mu_{s}),
\end{equation} 
where $(U_t)_{0\leq t\leq T}$ is the process defined by:
\begin{equation}
\label{U_t}
U_t=X_0+\int_0^t b(X_{s^-}) ds + \int_0^t \sigma(X_{s^-}) dB_s  + \int_0^t\int_E F(X_{s^-},z) \tilde{N}(ds,dz),
\end{equation}
and $(\mu_{t})_{0\le t\le T}$ is the family of marginal laws of $(U_{t})_{0\le t\le T}$.
\end{theorem}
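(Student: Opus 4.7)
The plan is a Picard-type fixed point argument on the space of càdlàg adapted processes, exploiting the fact that the mean reflection condition determines $K$ explicitly from the law of $U$.

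First I would establish a structural reduction: for any candidate deterministic flat solution $(X,K)$, setting $U_t$ as in \eqref{U_t} forces $X_t = U_t + K_t$, and, writing $\mu_t = \mathcal{L}(U_t)$, the constraint $\mathbb{E}[h(X_t)] \geq 0$ becomes $H(K_t,\mu_t) \geq 0$. By the bi-Lipschitz monotonicity \eqref{Hbilip} of $H(\cdot,\mu_t)$ this forces $K_t \geq \bar G_0(\mu_t)$; combining with $K_t \geq 0$ and the non-decreasing character of $K$ gives $K_t \geq \sup_{s\leq t} G_0(\mu_s)$. Conversely, the candidate $K_t^\star := \sup_{s\leq t} G_0(\mu_s)$ is non-decreasing, starts at $0$ (since $\mathbb{E}[h(X_0)]\geq 0$ implies $G_0(\mu_0)=0$) and can only grow at times where $H(K_t^\star,\mu_t)=0$, so the Skorokhod relation $\int_0^t \mathbb{E}[h(X_s)]\,dK_s = 0$ is satisfied. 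This proves that, given $U$, the reflector $K$ is uniquely determined by formula \eqref{K_t}.

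Next I would set up the fixed point. For a fixed horizon $T$, let $\mathcal{S}^2_T$ be the space of càdlàg adapted processes with norm $\|X\|_T = (\mathbb{E}[\sup_{t\leq T}|X_t|^2])^{1/2}$, and define $\Phi : \mathcal{S}^2_T \to \mathcal{S}^2_T$ by $\Phi(X)_t = U_t(X) + K_t(X)$, where $U(X)$ is obtained by plugging $X$ into the right-hand side of \eqref{U_t} and $K_t(X) = \sup_{s\leq t} G_0(\mathcal{L}(U_s(X)))$. By the previous step, fixed points of $\Phi$ are exactly the deterministic flat solutions. For $X,Y \in \mathcal{S}^2_T$, combining Doob's inequality, Itô's isometry, the Burkholder--Davis--Gundy inequality applied to the compensated Poisson integral and Assumption~\ref{lip} with $p=2$ yields
\[
\mathbb{E}\Bigl[\sup_{r\leq t}|U_r(X)-U_r(Y)|^2\Bigr] \le C \int_0^t \mathbb{E}\Bigl[\sup_{r\le s}|X_r-Y_r|^2\Bigr] ds,
\]
while Lemma~\ref{wiss} together with the Kantorovich bound $W_1(\mathcal{L}(U_s(X)),\mathcal{L}(U_s(Y))) \leq \mathbb{E}[|U_s(X)-U_s(Y)|]$ gives
\[
\sup_{s\leq t}|K_s(X)-K_s(Y)|^2 \le \frac{M^2}{m^2}\,\mathbb{E}\Bigl[\sup_{s\leq t}|U_s(X)-U_s(Y)|^2\Bigr].
\]
Summing these estimates and applying Gronwall's lemma, $\Phi$ is a strict contraction on $\mathcal{S}^2_{T_0}$ for some $T_0$ depending only on the Lipschitz constants from \ref{lip} and the ratio $M/m$, so it admits a unique fixed point there. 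Iterating on successive intervals of length $T_0$ extends the solution uniquely to $[0,T]$ for any $T$, and the explicit form \eqref{K_t} is immediate from the definition of $K(X)$ at the fixed point.

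The main obstacle is controlling the non-local dependence of $K$ on the law of $U$ in a norm that is compatible with the $\mathcal{S}^2$-contraction of the jump SDE: this is exactly what the Wasserstein-Lipschitz bound for $G_0$ (Lemma~\ref{wiss}) provides, and it is available only thanks to the bi-Lipschitz assumption~\ref{bilip} on $h$. Once this estimate is in hand, the jumps bring no extra difficulty, since the $L^2$-isometry of the compensated Poisson integral and the Lipschitz control on $F$ in~\ref{lip}(i) let the standard SDE machinery go through verbatim.
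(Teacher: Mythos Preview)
Your proposal is correct and follows essentially the same route as the paper: a Picard fixed-point argument on the space of square-integrable càdlàg processes, where the reflector $K$ is explicitly reconstructed from the law of $U$ via $\sup_{s\le t}G_0(\mu_s)$, and the contraction is obtained by combining the Lipschitz/BDG estimates on the SDE part with the Wasserstein-Lipschitz bound for $G_0$ from Lemma~\ref{wiss}. Your preliminary ``structural reduction'' showing that any flat deterministic solution must have $K_t=\sup_{s\le t}G_0(\mu_s)$ is slightly more explicit than the paper's verification, and your invocation of Gronwall at the contraction step is superfluous (the bound $C\int_0^T\cdots\,ds\le CT\|X-Y\|^2$ already yields a contraction for small $T$ directly), but neither point affects the validity of the argument.
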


\begin{proof}
The proof for the case of continuous backward SDEs is given in \cite{BEH16}. For the ease of the reader, we sketch the proof for the forward case with jumps.

Let $\hat{X}$ be a given process such that, for all $t>0$, $\e\big[\sup_{s\leq t}|\hat X_s|^2\big]<\infty$.
We set $$\hat U_t=X_0+\int_0^t b(\hat X_{s^-}) ds + \int_0^t \sigma(\hat X_{s^-}) dB_s  + \int_0^t\int_E F(\hat X_{s^-},z) \tilde{N}(ds,dz),$$ and define the function $K$ by setting 
\begin{equation}
\label{K}
K_t=\sup_{s\leq t} \inf\{x\geq0:\e[h(x+\hat U_s)] \geq 0\}=\sup\limits_{s\le t} {G}_0(\hat\mu_{s}).
\end{equation}
The function $K$ being given, let us define the process $X$ by the formula
$$X_t=X_0+\int_0^t b(\hat X_{s^-}) ds + \int_0^t \sigma(\hat X_{s^-}) dB_s  + \int_0^t\int_E F(\hat X_{s^-},z) \tilde{N}(ds,dz)+K_t.$$
Let us check that $(X, K)$ is the solution to \eqref{eq:main}. By definition of $K$, $\e[h(X_t)] \geq 0$ and we have $dK$ almost everywhere,  $$K_t=\sup_{s\leq t} \inf\{x\geq0:\e[h(x+\hat U_s)] \geq 0\}>0,$$
so that $\e[h(X_t)]=\e[h(\hat U_t+K_t)]=0~~~dK$-a.e. since $h$ is continuous and nondecreasing.\\
\\
Next, we consider the set $\mathcal{C}^2=\{X~ \mbox{càdlàg},~\e(\sup_{t\leq T}|X_s|^2)<\infty\}$ and the map $\Xi:\mathcal{C}^2\longrightarrow \mathcal{C}^2$ which associates to $\hat X$ the process $X$. Let us show that $\Xi$ is a contraction. Let $\hat X,~\hat X'\in \mathcal{C}^2$ be given, and define $K$ and $K'$ as above, using the same Brownian motion. We have from Assumption $\ref{lip}$, Cauchy-Schwartz and Doob inequality

\begin{equation*}
 \begin{aligned}
\e\bigg[\sup_{t\leq T}|X_t-X'_t|^2\bigg] & \leq 4 \e\bigg[\sup_{t\leq T}\Bigg\{\bigg|\int_0^t \bigg(b(\hat X_{s^-})-b(\hat X'_{s^-})\bigg) ds\bigg|^2 + \bigg|\int_0^t \bigg(\sigma(\hat X_{s^-})-\sigma(\hat X'_{s^-})\bigg) dB_s\bigg|^2 \\&~~~~ + \bigg|\int_0^t\int_E \bigg(F(\hat X_{s^-},z)-F(\hat X'_{s^-},z)\bigg) \tilde{N}(ds.dz)\bigg|^2 + |K_t-K'_t|^2\Bigg\}\bigg]\\
& \leq  4 \Bigg\{\e\bigg[\sup_{t\leq T}t\int_0^t \Big|b(\hat X_{s^-})-b(\hat X'_{s^-})\Big|^2 ds\bigg] + \e\bigg[\sup_{t\leq T}\bigg|\int_0^t \bigg(\sigma(\hat X_{s^-})-\sigma(\hat X'_{s^-})\bigg) dB_s\bigg|^2\bigg] \\&~~~~ + \e\bigg[\sup_{t\leq T}\bigg|\int_0^t\int_E \bigg(F(\hat X_{s^-},z)-F(\hat X'_{s^-},z)\bigg) \tilde{N}(ds,dz)\bigg|^2\bigg] + \sup_{t\leq T}|K_t-K'_t|^2\Bigg\} \\
& \leq C \Bigg\{T\e\bigg[\int_0^T \Big|b(\hat X_{s^-})-b(\hat X'_{s^-})\Big|^2 ds\bigg]+ \e\bigg[\int_0^T \Big|\sigma(\hat X_{s^-})-\sigma(\hat X'_{s^-})\Big|^2ds\bigg] \\&~~~~ + \int_0^T\int_E\e\bigg[\Big|F(\hat X_{s^-},z)-F(\hat X'_{s^-},z)\Big|^2\bigg] \lambda(dz)ds + \sup_{t\leq T}|K_t-K'_t|^2\Bigg\}\\
& \leq C\Bigg\{T^2C_1\e\bigg[\sup_{t\leq T}|\hat X_{t^-}-\hat X'_{t^-}|^2\bigg]+ TC_1\e\bigg[\sup_{t\leq T}|\hat X_{t^-}-\hat X'_{t^-}|^2\bigg] \\&~~~~ +  TC_1\e\bigg[\sup_{t\leq T}|\hat X_{t^-}-\hat X'_{t^-}|^2\bigg] + \sup_{t\leq T}|K_t-K'_t|^2\Bigg\}\\
& \leq C\bigg(T^2C_1+TC_2\bigg)\e\bigg[\sup_{t\leq T}|\hat X_{t}-\hat X'_{t}|^2\bigg]+C\sup_{t\leq T}|K_t-K'_t|^2.
 \end{aligned}
\end{equation*}
   
From the representation $\eqref{K}$ of the process $K$ and Lemma \ref{wiss}, we have that  
 
\begin{equation*}
 \begin{aligned}
 \sup_{t\leq T}|K_t-K'_t|^2 & \leq \frac{M}{m} \e\bigg[\sup_{t\leq T}|\hat U_t-\hat U'_t|^2\bigg]\\
 &\leq C(T^2C_1+TC_2)\e\bigg[\sup_{t\leq T}|\hat X_{t}-\hat X'_{t}|^2\bigg].
 \end{aligned}
\end{equation*}  
 
Therefore,

\begin{equation*}
 \begin{aligned}
  \e\bigg[\sup_{t\leq T}|X_t-X'_t|^2\bigg] & \leq  C(1+T)T\e\bigg[\sup_{t\leq T}|\hat X_{t}-\hat X'_{t}|^2\bigg].
 \end{aligned}
\end{equation*}   
 Hence, there exists a positive $\mathcal{T}$, depending on $b$, $\sigma$, $F$ and $h$ only, such that for all $T <\mathcal{T}$, the map $\Xi$ is a contraction. We first deduce the existence and uniqueness of solution on $[0, \mathcal{T}]$ and then on $\rset^+$ by iterating the construction.  
\end{proof}

\subsection{Regularity results on $K$, $X$ and $U$}

\begin{remark}
\label{rem}
Note that from this construction, we deduce that for all $0\leq s< t$: 
    \begin{align*}
     &K_{t}-K_{s}\\
   &= \sup\limits_{s\le r\le t} \inf  \left\{x\ge 0 : \mathbb{E}\left[ h \left( x+X_s+\int_s^{r} b(X_{u^-}) du +\int_s^{r} \sigma(X_{u^-}) dB_u + \int_s^{r}\int_E F(X_{u^-},z) \tilde{N}(du,dz) \right)\right] \right\}.    
    \end{align*}       
\end{remark}

\begin{proof}
From the representation $\eqref{K_t}$ of the process $K$, we have
\begin{equation*}
\begin{aligned}
K_t&=\sup_{r\leq t} \inf\bigg\{x\geq0:\e[h(x+U_r)] \geq 0\bigg\}\\&
=\sup_{r\leq t} G_0(U_r)\\&
=\max\bigg\{\sup_{r\leq s}G_0(U_r),\sup_{s\leq r\leq t} G_0(U_r)\bigg\}\\&
=\max\bigg\{K_s,\sup_{s\leq r\leq t} G_0(U_r)\bigg\}\\&
=\max\bigg\{K_s,\sup_{s\leq r\leq t} G_0(X_s-K_s+U_r-U_s)\bigg\}\\&
=\max\bigg\{K_s,\sup_{s\leq r\leq t} \bigg[\bar{G_0}(X_s-K_s+U_r-U_s)^+\bigg]\bigg\}.
\end{aligned}
\end{equation*}
By the definition of $\bar{G_0}$, we can observe that for all $y\in\mathbb{R}$, $\bar{G_0}(X+y)=\bar{G_0}(X)-y$, so we get
\begin{equation*}
\begin{aligned}
K_t&=\max\bigg\{K_s,\sup_{s\leq r\leq t} \bigg[\bigg(K_s+\bar{G_0}(X_s+U_r-U_s)\bigg)^+\bigg]\bigg\}\\&
=K_s+\max\bigg\{0,\sup_{s\leq r\leq t} \bigg[\bigg(K_s+\bar{G_0}(X_s+U_r-U_s)\bigg)^+-K_s\bigg]\bigg\}.
\end{aligned}
\end{equation*}
Note that $\sup_r (f(r)^+)=(\sup_r f(r))^+=\max(0,\sup_r f(r))$ for all function $f$, and obviously
\begin{equation*}
\begin{aligned}
K_t&=K_s+\sup_{s\leq r\leq t} \bigg[\bigg\{\bigg(K_s+\bar{G_0}(X_s+U_r-U_s)\bigg)^+-K_s\bigg\}^+\bigg]\\&
=K_s+\sup_{s\leq r\leq t}\bigg[\bigg(\bar{G_0}(X_s+U_r-U_s)\bigg)^+\bigg]\\&
=K_s+\sup_{s\leq r\leq t}G_0(X_s+U_r-U_s),
\end{aligned}
\end{equation*}
and so
\begin{equation*}
\begin{aligned}
K_t-K_s=\sup_{s\leq r\leq t}G_0(X_s+U_r-U_s).
\end{aligned}
\end{equation*}
\end{proof}
In the following, we make an intensive use of this representation formula of the process $K$.\\

Let $(\m{F}_s)_{s\geq0}$ be a filtration on $(\Omega,\m{F},\mathbb{P})$ such that $(X_s)_{s\geq0}$ is an $(\m{F}_s)_{s\geq0}$-adapted process.
\begin{proposition}
\label{propriete_1}
Suppose that Assumptions $\ref{lip}$ and $\ref{bilip}$ hold. Then, for all $p\geq 2$, there exists a positive constant $K_p$, depending on $T$, $b$, $\sigma$, $F$ and $h$ such that
\begin{enumerate}
\item[(i)] $\e\big[\sup_{t\leq T}|X_t|^p\big]\leq K_p\big(1+\e\big[|X_0|^p\big]\big).$
\item[(ii)]$\forall~0\leq s\leq t\leq T,~~~~\e\big[\sup_{s\leq u\leq t}|X_u|^p|\m{F}_s\big]\leq C\big(1+|X_s|^p\big).$

\end{enumerate}
\end{proposition}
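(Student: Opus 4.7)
The plan for both parts is to exploit the decomposition $X_t = U_t + K_t$ coming from Theorem~\ref{thrm_exacte}, where $U$ is the pure stochastic part defined in \eqref{U_t} and $K$ is a deterministic, non-decreasing function. Since $K$ is deterministic, moment bounds on $X$ reduce to moment bounds on $U$ together with pointwise bounds on $K$, and the Lipschitz property of $G_0$ with respect to the Wasserstein-1 distance (Lemma~\ref{wiss}) provides the required control of $K$ through the law of $U$.

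For (i), I first estimate $\e[\sup_{s\le t}|U_s|^p]$ by applying H\"older's inequality to the drift term, the Burkholder--Davis--Gundy inequality to the Brownian integral, and Kunita's inequality to the compensated Poisson integral. Combined with the linear growth of $b$, $\sigma$, $F$ that follows from Assumption~\ref{lip}, this yields
\begin{equation*}
\e\Big[\sup_{s\le t}|U_s|^p\Big] \le C_p\Big(1+\e[|X_0|^p] + \int_0^t \e[|X_{r^-}|^p]\,dr\Big).
\end{equation*}
For $K$, the representation $K_t=\sup_{s\le t} G_0(\mu_s)$ in \eqref{K_t} together with Lemma~\ref{wiss} gives
\begin{equation*}
0\le K_t \le G_0(\delta_0) + \tfrac{M}{m}\,W_1(\mu_t,\delta_0) \le C\Big(1+\e\big[\sup_{s\le t}|U_s|\big]\Big),
\end{equation*}
so $\sup_{t\le T}|K_t|^p$ is controlled by $\e[\sup_{s\le T}|U_s|^p]$. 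Summing these estimates, I apply Gronwall's lemma to the map $t\mapsto \e[\sup_{s\le t}|X_s|^p]$ to conclude (i).

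For (ii), I condition on $\m{F}_s$ and apply the same BDG, Kunita and H\"older estimates in their conditional forms to obtain
\begin{equation*}
\e\Big[\sup_{s\le u\le t}|U_u-U_s|^p \,\Big|\,\m{F}_s\Big]\le C\int_s^t \e\big[1+|X_{r^-}|^p\,\big|\,\m{F}_s\big]\,dr.
\end{equation*}
The key observation for $K$ is that Remark~\ref{rem} expresses $K_u-K_s$ in terms of the \emph{law} of $X_s+\int_s^r\cdots$, so $K_u-K_s$ is a deterministic quantity. Using Lemma~\ref{wiss} once more together with part (i), $|K_u-K_s|$ is bounded by a constant depending only on $T,b,\sigma,F,h$. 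Writing $X_u=X_s+(U_u-U_s)+(K_u-K_s)$, raising to the $p$-th power, taking the supremum over $u\in[s,t]$ and the conditional expectation, I obtain
\begin{equation*}
\e\Big[\sup_{s\le u\le t}|X_u|^p\,\Big|\,\m{F}_s\Big]\le C_p |X_s|^p + C + C\int_s^t \e\Big[\sup_{s\le v\le r}|X_v|^p\,\Big|\,\m{F}_s\Big]\,dr,
\end{equation*}
and a conditional Gronwall argument yields (ii).

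The main obstacle I anticipate is correctly handling the $K$ term: in (i) it requires recasting the supremum $\sup_{s\le t}G_0(\mu_s)$ as a Wasserstein-based bound via Lemma~\ref{wiss}, while in (ii) it requires combining the representation in Remark~\ref{rem} with the a priori estimate from (i) to conclude that $K_u-K_s$ contributes only a constant (rather than a random quantity involving $X_s$) in the conditional estimate.
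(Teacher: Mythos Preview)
Your proposal is correct and follows essentially the same route as the paper: decompose $X=U+K$, control the stochastic integrals in $U$ via H\"older/BDG-type inequalities and the linear growth implied by \ref{lip}, control $K$ through the Lipschitz property of $G_0$ in Lemma~\ref{wiss}, and close with Gronwall. The only cosmetic differences are that the paper takes $\mu_0$ (using $G_0(\mu_0)=0$ from \ref{bilip}(ii)) rather than $\delta_0$ as the reference measure in the Wasserstein bound for $K$, and in (ii) it simply bounds $|K_u-K_s|^p\le 2|K_T|^p$ directly instead of invoking Remark~\ref{rem}; both choices lead to the same estimate.
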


\begin{remark}
\label{rem_U_1}
Under the same conditions, we conclude that $$\e\big[\sup_{t\leq T}|U_t|^p\big]\leq K_p\big(1+\e\big[|X_0|^p\big]\big).$$
\end{remark}

\begin{proof}[Proof of (i)]\renewcommand{\qedsymbol}{}
We have
\begin{equation*}
\begin{aligned}
\e\bigg[\sup_{t\leq T}|X_t|^p\bigg]&\leq 5^{p-1}\Bigg\{\e|X_0|^p+\e \sup_{t\leq T}\bigg(\int_0^t |b(X_{s^-})| ds\bigg)^p + \e \sup_{t\leq T}\bigg|\int_0^t \sigma(X_{s^-}) dB_s\bigg|^p  \\&~~+ \e \sup_{t\leq T}\bigg|\int_0^t\int_E F(X_{s^-},z) \tilde{N}(ds,dz)\bigg|^p + K_T^p\Bigg\}. 
\end{aligned}
\end{equation*}
Let us first consider the last term $K_T =\sup_{t\leq T}G_0(\mu_s)$. From the Lipschitz property of Lemma $\ref{wiss}$ of $G_0$ and the definition of the Wasserstein metric we have
$$\forall t\geq 0,~|G_0(\mu_t)|\leq \frac{M}{m}\e[|U_t-U_0|],$$
since $G_0(\mu_0)=0$ as $\e[h(X_0)]\geq 0$ and where $U$ is defined by $\eqref{U}$. Therefore
\begin{equation*}
\begin{aligned}
|K_T|^p=|\sup_{t\leq T}G_0(\mu_t)|^p&\leq 3^{p-1}\bigg(\frac{M}{m}\bigg)^p\Bigg\{\e \sup_{t\leq T}\bigg(\int_0^t |b(X_{s^-})| ds\bigg)^p + \e \sup_{t\leq T}\bigg|\int_0^t \sigma(X_{s^-}) dB_s\bigg|^p  \\&~~+ \e \sup_{t\leq T}\bigg|\int_0^t\int_E F(X_{s^-},z) \tilde{N}(ds,dz)\bigg|^p\Bigg\},
\end{aligned}
\end{equation*}
and so
\begin{equation*}
\begin{aligned}
\e\big[\sup_{t\leq T}|X_t|^p\big]&\leq C(p,M,m)\e\bigg[|X_0|^p+\sup_{t\leq T}\bigg(\int_0^t |b(X_{s^-})| ds\bigg)^p + \sup_{t\leq T}\bigg|\int_0^t \sigma(X_{s^-}) dB_s\bigg|^p  \\&~~+ \sup_{t\leq T}\bigg|\int_0^t\int_E F(X_{s^-},z) \tilde{N}(ds,dz)\bigg|^p\bigg].
\end{aligned}
\end{equation*}
Hence, using Assumption $\ref{lip}$, Cauchy-Schwartz, Doob and BDG inequality we get
\begin{equation*}
\begin{aligned}
\e\bigg[\sup_{t\leq T}|X_t|^p\bigg]&\le C\Bigg\{\e\bigg[|X_0|^p\bigg] + T^{p-1}\e\bigg[\int_0^T (1+|X_{s^-}|)^p ds\bigg] + C_1\e\bigg[\int_0^T (1+|X_{s^-}|)^2 ds\bigg]^\frac{p}{2} \\&~~+ C_2\e\bigg[\int_0^T (1+|X_{s^-}|)^p ds\bigg]\Bigg\}\\
&\le C_1\bigg(1+\e|X_0|^p\bigg)+C_2\int_{0}^{T}\e\bigg[\sup_{t\leq r}|X_t|^p\bigg]dr,
\end{aligned}
\end{equation*}
and from Gronwall's Lemma, we can conclude that  for all $p\geq 2$, there exists a positive constant $K_p$, depending on $T$, $b$, $\sigma$, $F$ and $h$ such that
$$\e\big[\sup_{t\leq T}|X_t|^p\big]\leq K_p\big(1+\e\big[|X_0|^p\big]\big).$$
\end{proof}
\begin{proof}[Proof of (ii)]
For the first part, we have
\begin{equation*}
\begin{aligned}
 X_u &=U_u+K_u\\&=X_s+(U_u-U_s)+(K_u-K_s)\\&=X_s+\int_s^u b(X_{r^-}) dr + \int_s^u \sigma(X_{r^-}) dB_r  + \int_s^u\int_E F(X_{r^-},z) \tilde{N}(dr,dz)\\&~~~~ + (K_u-K_s).
\end{aligned}
\end{equation*}
Define that $\e_s[\cdotp]=\e[\cdotp|\m{F}_s]$, then we get
\begin{equation*}
\begin{aligned}
\e_s\bigg[\sup_{s\leq u\leq t}|X_u|^p\bigg]&\leq 5^{p-1}\Bigg\{\e_s\bigg[|X_s|^p\bigg]+\e_s\bigg[ \sup_{s\leq u\leq t}\bigg|\int_s^u b(X_{r^-}) dr\bigg|^p\bigg] + \e_s \bigg[\sup_{s\leq u\leq t}\bigg|\int_s^u \sigma(X_{r^-}) dB_r\bigg|^p\bigg]  \\&~~+ \e_s \bigg[\sup_{s\leq u\leq t}\bigg|\int_s^u\int_E F(X_{r^-},z) \tilde{N}(dr,dz)\bigg|^p\bigg] +\bigg|K_t-K_s\bigg|^p\Bigg\}\\&
\leq  C\Bigg\{|X_s|^p+T^{p-1}\int_s^t \e_s\bigg[\bigg|b(X_{r^-})\bigg|^p\bigg] dr+ \int_s^t \e_s\bigg[\bigg|\sigma(X_{r^-})\bigg|^p\bigg] dr  \\&~~+ \int_s^t\int_E \e_s\bigg[\bigg|F(X_{r^-},z)\bigg|^p\bigg]\lambda(dz) dr +2\bigg|K_T\bigg|^p\Bigg\}\\& 
\leq C(T)\Bigg\{|X_s|^p+C_1\int_s^t \e_s\bigg[1+|X_{r^-}|^p\bigg]dr+2\bigg|K_T\bigg|^p\Bigg\}\\&
\leq C_1(1+|X_s|^p)+C_2\int_s^t \e_s[\sup_{s\leq u \leq r}
|X_{u^-}|^p]dr.
\end{aligned}
\end{equation*} 
Finally, from Gronwall's Lemma, we deduce that for all $0\leq s\leq t\leq T$, there exists a constant $C$, depending on $p$, $T$, $b$, $\sigma$, $F$ and $h$ such that$$\e\big[\sup_{s\leq u\leq t}|X_u|^p|\m{F}_s\big]\leq C\big(1+|X_s|^p\big).$$
\end{proof}

\begin{proposition}
\label{propriete_2}
Let $p\geq2$ and let Assumptions \ref{lip}, \ref{bilip} and \ref{int} hold. There exists a constant $C$ depending on $p$, $T$, $b$, $\sigma$, $F$ and $h$ such that
\begin{enumerate}
\item [(i)] $\forall~0\leq s<t\leq T,~~~~|K_t-K_s|\leq C|t-s|^{(1/2)}.$
\item [(ii)] $\forall~0\leq s\leq t\leq T,~~~~\e\big[|U_t-U_s|^p\big]\leq C|t-s|.$
\item [(iii)] $\forall~0\leq r<s<t\leq T,~~~~\e[|U_s-U_r|^p|U_t-U_s|^p]\leq C|t-r|^2.$
\end{enumerate}
\end{proposition}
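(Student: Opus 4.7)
The plan is to prove (ii) first, derive (i) from it via the Wasserstein estimate of Lemma~\ref{wiss}, and then combine both with a conditioning argument at time $s$ to obtain (iii).

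\textbf{Items (ii) and (i).} For (ii), I would split $U_{t}-U_{s}$ into its drift, Brownian, and compensated Poisson parts. H\"older gives $\e|\int_{s}^{t} b(X_{u^{-}})\, du|^{p} \le (t-s)^{p-1}\int_{s}^{t}\e|b(X_{u^{-}})|^{p}\, du \le C(t-s)^{p}$; the Burkholder--Davis--Gundy inequality combined with Jensen yields $\e|\int_{s}^{t}\sigma(X_{u^{-}})\, dB_{u}|^{p} \le C(t-s)^{p/2}$; and the Kunita inequality for jump martingales gives $\e|\int_{s}^{t}\int_{E} F(X_{u^{-}},z)\tilde{N}(du,dz)|^{p} \le C((t-s)^{p/2}+(t-s))$. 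Throughout, Assumption~\ref{lip} and Proposition~\ref{propriete_1}(i) bound the integrands. Adding the three contributions and using $p\ge 2$ with $|t-s|\le T$ to absorb $(t-s)^{p}$ and $(t-s)^{p/2}$ into $C(t-s)$ yields (ii). For (i), Remark~\ref{rem} identifies $K_{t}-K_{s}$ as the supremum over $r\in[s,t]$ of $G_{0}$ applied to the law of $X_{s}+U_{r}-U_{s}$. Since the mean reflection condition gives $\e[h(X_{s})]\ge 0$, $G_{0}$ applied to the law of $X_{s}$ vanishes; Lemma~\ref{wiss} and the trivial coupling $(X_{s},X_{s}+U_{r}-U_{s})$ then give $G_{0}(\cdot)\le \frac{M}{m}\e|U_{r}-U_{s}|$, and the $|r-s|^{1/2}$ rate follows from (ii) with $p=2$ and Jensen.

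\textbf{Item (iii).} Conditioning on $\mathcal{F}_{s}$ gives
\begin{equation*}
\e\bigl[|U_{s}-U_{r}|^{p}|U_{t}-U_{s}|^{p}\bigr] = \e\bigl[|U_{s}-U_{r}|^{p}\, \e[|U_{t}-U_{s}|^{p}\mid \mathcal{F}_{s}]\bigr].
\end{equation*}
Rerunning the argument of (ii) conditionally, now using Proposition~\ref{propriete_1}(ii) in place of (i), produces $\e[|U_{t}-U_{s}|^{p}\mid \mathcal{F}_{s}] \le C(t-s)(1+|X_{s}|^{p})$, so it suffices to prove $\e[|U_{s}-U_{r}|^{p}(1+|X_{s}|^{p})] \le C(s-r)$. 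I would write $|X_{s}|^{p}\le C(|X_{r}|^{p}+|X_{s}-X_{r}|^{p})$ and expand $X_{s}-X_{r}=(U_{s}-U_{r})+(K_{s}-K_{r})$, leaving three cross terms: $\e[|U_{s}-U_{r}|^{p}|X_{r}|^{p}]$, handled by a further conditioning on $\mathcal{F}_{r}$ combined with Proposition~\ref{propriete_1}(i); $\e|U_{s}-U_{r}|^{2p}$, which is $O(s-r)$ by (ii) applied with exponent $2p$; and $|K_{s}-K_{r}|^{p}\,\e|U_{s}-U_{r}|^{p}$, which is $O((s-r)^{p/2+1})$ by (i) and (ii). Combining, $\e[|U_{s}-U_{r}|^{p}|U_{t}-U_{s}|^{p}] \le C(t-s)(s-r) \le C(t-r)^{2}$. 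The hard part is exactly this step: a direct Cauchy--Schwarz on the product $|U_{s}-U_{r}|^{p}|U_{t}-U_{s}|^{p}$ only yields $|t-r|$, so one is forced to peel off the second factor by $\mathcal{F}_{s}$-conditioning and then perform a further conditioning at time $r$ after decomposing $X_{s}$ into $X_{r}+(U_{s}-U_{r})+(K_{s}-K_{r})$. What makes the two-conditioning scheme succeed is that (ii) furnishes a \emph{genuinely linear}-in-$(s-r)$ control of the $2p$-th moment---valid because the Kunita inequality contributes an $O(s-r)$ piece from $\int\int|F|^{p}\lambda\,dz\,du$---rather than the $(s-r)^{p/2}$ one would get from Brownian noise alone.
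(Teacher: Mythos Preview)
Your proof is correct and follows essentially the same route as the paper: the same moment splitting for (ii), the same use of Remark~\ref{rem} and Lemma~\ref{wiss} for (i), and the identical double-conditioning (first at $\mathcal{F}_s$, then at $\mathcal{F}_r$) together with the decomposition $X_s=X_r+(U_s-U_r)+(K_s-K_r)$ for (iii). The only differences are cosmetic---you prove (ii) before (i) and invoke it there, whereas the paper bounds $\e|U_r-U_s|$ directly inside the proof of (i)---and your closing remark is slightly off: Brownian noise alone gives $(s-r)^{p}$ (not $(s-r)^{p/2}$) for the $2p$-th moment, which is already $O(s-r)$ on bounded intervals, so the linear control in (ii) does not actually hinge on the jump contribution.
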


\begin{remark}
\label{rem_X}
Under the same conditions, we conclude that $$\forall~0\leq s\leq t\leq T,~~~~\e\big[|X_t-X_s|^p\big]\leq C|t-s|.$$
\end{remark}

\begin{proof}[Proof of (i)]\renewcommand{\qedsymbol}{}
Let us recall that $$\bar{G_0}(X)=\inf\{x\in\mathbb{R}:\e[h(x+ X)] \geq 0\},$$
$$G_0(X)=(\bar{G_0}(X))^+=\inf\{x\geq 0:\e[h(x+X)] \geq 0\},$$
for all process $X$.\\
From Remark \ref{rem}, we have
\begin{equation}
\label{Kt-Ks}
\begin{aligned}
K_t-K_s=\sup_{s\leq r\leq t}G_0(X_s+U_r-U_s).
\end{aligned}
\end{equation}
Hence, from the previous representation of $K_t-K_s$, we will deduce the $\frac{1}{2}$-Hölder property of the function $t\longmapsto K_t$. Indeed, since by definition $G_0(X_s)=0$, if $s<t$, using Lemma \ref{wiss},
\begin{equation*}
\begin{aligned}
|K_t-K_s|&=\sup_{s\leq r\leq t}G_0(X_s+U_r-U_s)\\&
=\sup_{s\leq r\leq t}[G_0(X_s+U_r-U_s)-G_0(X_s)]\\&
=\frac{M}{m}\sup_{s\leq r\leq t}\e[|U_r-U_s|],
\end{aligned}
\end{equation*}
and so
\begin{equation*}
\begin{aligned}
|K_t-K_s|&\leq C\Bigg\{\e \bigg[\sup_{s\leq r\leq t}\bigg|\int_s^r b(X_{u^-}) du\bigg|\bigg] + \bigg(\e \bigg[\sup_{s\leq r\leq t}\bigg|\int_s^r \sigma(X_{u^-}) dB_u\bigg|^2\bigg]\bigg)^{1/2}\\&~~~~ + \bigg(\e \bigg[\sup_{s\leq r\leq t}\bigg|\int_s^r\int_E F(X_{u^-},z) \tilde{N}(du,dz)\bigg|^2\bigg]\bigg)^{1/2}\Bigg\}\\&
\leq C\Bigg\{\int_s^t\e\bigg[\bigg|b(X_{u^-})\bigg|\bigg]du+\bigg(\e\bigg[\int_s^t\bigg|\sigma(X_{u^-})\bigg|^2du\bigg]\bigg)^{1/2}\\&~~~~+\bigg(\e \bigg[\int_s^t\int_E \bigg|F(X_{u^-},z) \bigg|^2\lambda(dz)du\bigg]\bigg)^{1/2}\Bigg\}\\&
\leq C\Bigg\{|t-s|\e\bigg[1+\sup_{u\leq T}|X_u|\bigg]+|t-s|^{1/2}\bigg(\e\bigg[1+\sup_{u\leq T}|X_u|^2\bigg]\bigg)^{1/2}\Bigg\}.
\end{aligned}
\end{equation*}
Therefore, if $X_0\in \lp^p$ for some $p\geq2$, it follows from Proposition \ref{propriete_1} that 
\begin{equation*}
\begin{aligned}
|K_t-K_s|\leq C|t-s|^{1/2}.
\end{aligned}
\end{equation*}

\end{proof}
\begin{proof}[Proof of (ii)]\renewcommand{\qedsymbol}{}
    \begin{equation*}
    \begin{aligned}
    \e\bigg[|U_t-U_s|^p\bigg]&\leq 4^{p-1}\e\Bigg[\bigg(\int_s^t |b(X_{r^-})| dr\bigg)^p + \bigg|\int_s^t \sigma(X_{r^-}) dB_r\bigg|^p  \\&~~+\bigg|\int_s^t\int_E F(X_{r^-},z) \tilde{N}(dr,dz)\bigg|^p \Bigg]\\
    &\leq C\sup\limits_{0\le r\le t}\e\bigg[\bigg(\int_s^r|b(X_{u^-})| du\bigg)^p + \bigg|\int_s^r \sigma(X_{u^-}) dB_u\bigg|^p  \\&~~+\bigg|\int_s^r\int_E F(X_{u^-},z) \tilde{N}(du,dz)\bigg|^p\bigg]\\
    &\leq C\Bigg\{|t-s|^{p-1}\e\bigg[\int_s^t (1+|X_{u^-}|)^p du\bigg] + C_1\e\bigg[\bigg(\int_s^t (1+|X_{u^-}|)^2 du\bigg)^{p/2}\bigg] \\&~~+ C_2\e\bigg[\int_s^t (1+|X_{u^-}|)^p du\bigg]\Bigg\}\\
    & \leq C_1\e\bigg[1+\sup_{t\leq T}|X_t|^p\bigg]|t-s|^p+C_2\e\bigg[\bigg(1+\sup_{t\leq T}|X_t|^2\bigg)^{p/2}\bigg]|t-s|^{p/2}\\&~~+C_3\e\bigg[1+\sup_{t\leq T}|X_t|^p\bigg]|t-s|.
    \end{aligned}
    \end{equation*}
    Finally, if $X_0\in \lp^p$ for some $p\geq2$, we can conclude that there exists a constant $C$, depending on $p$, $T$, $b$, $\sigma$, $F$ and $h$ such that
    $$\forall~0\leq s\leq t\leq T,~~~~\e\big[|X_t-X_s|^p\big]\leq C|t-s|.$$
\end{proof}
\begin{proof}[Proof of (iii)]
Let $0\leq r<s<t\leq T$, we have
\begin{equation*}
\begin{aligned}
\e\bigg[|U_s-U_r|^p|U_t-U_s|^p\bigg]&
\leq \e\bigg[|U_s-U_r|^p\e_s[|U_t-U_s|^p]\bigg]\\&
\leq C\e\Bigg[|U_s-U_r|^p\Bigg\{\e_s\bigg[\bigg|\int_s^t b(X_{s^-}) ds\bigg|^p\bigg]+\e_s\bigg[\bigg|\int_s^t \sigma(X_{s^-}) dB_s\bigg|^p\bigg]\\&~~~~+\e_s\bigg[\bigg|\int_s^t\int_E F(X_{s^-},z) d\tilde{N}(ds,dz)\bigg|^p\bigg]\Bigg\}\Bigg].
\end{aligned}
\end{equation*}
Then, from Burkholder-Davis-Gundy inequality, we get
\begin{equation*}
\begin{aligned}
\e\bigg[|U_s-U_r|^p|U_t-U_s|^p\bigg]&
\leq C\e\Bigg[|U_s-U_r|^p\Bigg\{\e_s\bigg[\bigg|\int_s^t b(X_{s^-}) ds\bigg|^p\bigg]+\bigg(\e_s\bigg[\int_s^t \bigg|\sigma(X_{s^-})\bigg|^2 ds\bigg]\bigg)^{p/2}\\&~~~~+\e_s\bigg[\int_s^t\int_E \bigg|F(X_{s^-},z)\bigg|^p \lambda(dz)ds\bigg]\Bigg\}\Bigg]\\&
\leq C\e\Bigg[|U_s-U_r|^p\Bigg\{|t-s|^p\bigg(1+\e_s\bigg[\sup_{s\leq u\leq t}|X_u|^p\bigg]\bigg)\\&~~~~+|t-s|^{p/2}\bigg(1+\e_s\bigg[\sup_{s\leq u\leq t}|X_u|^2\bigg]^{p/2}\bigg)+|t-s|\bigg(1+\e_s\bigg[\sup_{s\leq u\leq t}|X_u|^p\bigg]\bigg)\Bigg\}\Bigg]\\&
\leq C\e\Bigg[|U_s-U_r|^p\Bigg\{|t-s|\bigg(1+\e_s\bigg[\sup_{s\leq u\leq t}|X_u|^p\bigg]\bigg)\Bigg\}\Bigg],
\end{aligned}
\end{equation*}
thus, from (i) and Proposition \ref{propriete_1}, we obtain
\begin{equation*}
\begin{aligned}
\e\bigg[|U_s-U_r|^p|U_t-U_s|^p\bigg]&
\leq C_1|t-s|\e\Bigg[|U_s-U_r|^p\Bigg]+C_2|t-s|\e\Bigg[|U_s-U_r|^p|X_s|^p\Bigg]\\&
\leq C_1|t-s||s-r|+C_2|t-s|\e\Bigg[|U_s-U_r|^p\bigg(|X_s-X_r|^p+|X_r|^p\bigg)\Bigg]\\&
\leq C_1|t-r|^2+C_2|t-s|\e\Bigg[2^{p-1}|U_s-U_r|^{p}\bigg(|U_s-U_r|^p+|K_s-K_r|^p\bigg)\Bigg]\\&~~~~+C_3|t-s|\e\Bigg[|U_s-U_r|^p|X_r|^p\Bigg]\\&
\leq C_1|t-r|^2+C_2|t-s|\e\Bigg[|U_s-U_r|^{2p}\Bigg]+C_3|t-s||s-r|^{p/2}\e\Bigg[|U_s-U_r|^{p}\Bigg]\\&~~~~+C_4|t-s|\e\Bigg[|U_s-U_r|^p|X_r|^p\Bigg]\\&
\leq C_1|t-r|^2+C_4|t-s|\e\Bigg[|X_r|^p\e_r[|U_s-U_r|^p]\Bigg].
\end{aligned}
\end{equation*}
Following the proof of (ii), we can also get
\begin{equation*}
\begin{aligned}
\e_r[|U_s-U_r|^p]\leq C|s-r|\bigg(1+\e_r\bigg[\sup_{r\leq u\leq s}|X_u|^p\bigg]\bigg).
\end{aligned}
\end{equation*}
Then
\begin{equation*}
\begin{aligned}
\e\bigg[|U_s-U_r|^p|U_t-U_s|^p\bigg]&
\leq C_1|t-r|^2+C_2|t-s||s-r|\e\Bigg[|X_r|^p\bigg(1+\e_r\bigg[\sup_{r\leq u\leq s}|X_u|^p\bigg]\bigg)\Bigg]\\&
\leq C_1|t-r|^2+C_2|t-r|^2\e\Bigg[|X_r|^p\bigg(1+\sup_{r\leq u\leq s}|X_u|^p\bigg)\Bigg].
\end{aligned}
\end{equation*}
Under \ref{int}, we conclude that
\begin{equation*}
\e[|U_s-U_r|^p|U_t-U_s|^p]\leq C|t-r|^2,~~~~\forall~0\leq r<s<t\leq1.
\end{equation*}
\end{proof}

\subsection{Density of $K$}
Let $\m{L}$ be the linear partial operator of second order defined by
\begin{equation}
\label{L}
\m{L}f(x):=b(x)\frac{\partial}{\partial x}f(x)+\frac{1}{2}\sigma\sigma^*(x)\frac{\partial^2}{\partial x^2}f(x)+\int_E\bigg(f\big(x+F(x,z)\big)-f(x)-F(x,z)f'(x)\bigg)\lambda(dz),
\end{equation}
for any twice continuously differentiable function $f$.
\begin{proposition}
\label{propdensite}
Suppose that Assumptions $\ref{lip}$, $\ref{bilip}$ and $\ref{reg}$ hold and let $(X,K)$ denote the unique deterministic flat solution to \eqref{eq:main}. Then the Stieljes measure $dK$ is absolutely continuous with respect to the Lebesgue measure (Proposition \ref{propriete_2}) with density
\begin{equation}
\label{densite}
k:\mathbb{R}^+\ni t\longmapsto\frac{(\e[\m{L}h(X_{t^-})])^-}{\e[h'(X_{t^-})]}{\bf{1}}_{\e[h(X_t)]=0}.
\end{equation}
\end{proposition}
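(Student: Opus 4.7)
The plan is to apply Itô's formula to $h(X_{\cdot})$, take expectations so as to convert the Skorokhod condition in \eqref{eq:main} into a scalar Skorokhod problem for $\varphi(t) := \e[h(X_t)]$, and then read off the density of $dK$ from a Lebesgue density-point argument.

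First, since \ref{reg} makes $h$ twice continuously differentiable with bounded derivatives, and Proposition \ref{propriete_2} tells us that $K$ is continuous of finite variation (so the only jumps of $X$ come from the compensated Poisson integral), Itô's formula produces
\begin{equation*}
h(X_t) = h(X_0) + M_t + \int_0^t \m{L}h(X_{s^-})\,ds + \int_0^t h'(X_{s^-})\,dK_s,
\end{equation*}
with $M$ a true martingale: its quadratic variation is integrable thanks to Proposition \ref{propriete_1}, the boundedness of $h', h''$, and the linear growth of $b,\sigma, F$. Taking expectations, and using that $K$ is deterministic,
\begin{equation*}
\varphi(t) = \varphi(0) + \int_0^t \psi(s)\,ds + \int_0^t \rho(s)\,dK_s, \quad \psi(s):=\e[\m{L}h(X_{s^-})],\quad \rho(s):=\e[h'(X_{s^-})].
\end{equation*}
Assumption \ref{bilip} forces $h'\geq m$, so $\rho\geq m>0$, and a routine estimate (bounding the jump part of $\m{L}h$ by $\tfrac12\|h''\|_\infty\int_E F(x,z)^2\lambda(dz)$ via Taylor, then invoking Proposition \ref{propriete_1}) shows that $\psi$ is bounded on $[0,T]$.

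Next, I would set $Z(t):=\varphi(0)+\int_0^t\psi\,ds$ and $Y_t:=\int_0^t\rho(s)\,dK_s$, so that $\varphi = Z+Y$ with $Y$ continuous non-decreasing, $Y_0=0$, and the Skorokhod condition becomes $\int_0^T\varphi\,dY=0$. This is the standard one-sided Skorokhod problem on $[0,\infty)$, whose reflection formula yields
\begin{equation*}
Y_t=\sup_{s\leq t}(-Z_s)^+.
\end{equation*}
Because $Z$ is Lipschitz with constant $\|\psi\|_\infty$, the running maximum is Lipschitz as well, so $Y$ is Lipschitz; then $K=\int_0^{\,\cdot}\rho^{-1}dY$ is Lipschitz since $\rho\geq m$. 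In particular $dK$ is absolutely continuous with a bounded density $k$, which upgrades the Hölder-$1/2$ regularity of Proposition \ref{propriete_2}.

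Finally I would identify $k$. Differentiating $\varphi = Z+Y$ almost everywhere gives $\varphi'(t)=\psi(t)+\rho(t)k(t)$. On the open set $\{\varphi>0\}$, the minimality condition $\int\varphi\,dK=0$ forces $k=0$ a.e. On $F:=\{\varphi=0\}$, $\varphi\geq 0$ attains its minimum at every point, so at every Lebesgue density point of $F$ where $\varphi$ is differentiable one has $\varphi'(t)=0$; this yields $k(t)=-\psi(t)/\rho(t)$ a.e.\ on $F$. The constraint $k\geq 0$ then forces $\psi\leq 0$ a.e.\ on $F$, so $-\psi=\psi^-$ there, and we recover exactly \eqref{densite}. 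The main obstacle I anticipate is the passage from the scalar identity of Step~1 to absolute continuity of $K$: Proposition \ref{propriete_2} only delivers Hölder-$1/2$ regularity, and the genuine upgrade to Lipschitz uses the extra smoothness from \ref{reg} via Itô together with the Skorokhod reflection formula. Once that is done, the identification of the density is a clean density-point exercise.
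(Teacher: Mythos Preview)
Your proposal is correct and takes a genuinely different route from the paper. Both start from the same It\^o identity
\[
\varphi(t)=\varphi(0)+\int_0^t\psi(s)\,ds+\int_0^t\rho(s)\,dK_s,
\]
but then diverge. The paper argues pointwise: it fixes $t$, invokes the continuity of $s\mapsto\e[h(X_s)]$ and $s\mapsto\e[\m{L}h(X_s)]$ (Lemma~\ref{lemmacont}), and runs a three-case analysis ($\varphi(t)>0$; $\varphi(t)=0$ with $\psi(t)>0$; $\varphi(t)=0$ with $\psi(t)\le 0$) to show that on a small interval to the right of $t$ either $dK$ vanishes or $\varphi\equiv 0$, then divides \eqref{ito} by $r$ and passes to the limit. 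You instead recognise the scalar identity as a one-sided Skorokhod problem, apply the reflection formula $Y_t=\sup_{s\le t}(-Z_s)^+$ with $Z$ Lipschitz, and read off Lipschitz continuity of $K$ globally before identifying the density by a.e.\ differentiation. Your approach is cleaner on the absolute-continuity step (it yields Lipschitz regularity of $K$ in one stroke, genuinely upgrading the H\"older-$1/2$ bound of Proposition~\ref{propriete_2}) and avoids the somewhat informal ``divide by $r$ and take the limit'' passage; the paper's approach, on the other hand, never needs the explicit Skorokhod formula and makes more transparent \emph{why} the indicator $\mathbf{1}_{\varphi=0}$ and the negative part $\psi^-$ appear. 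One minor remark: your density-point comment is not actually needed---at any interior $t\in\{\varphi=0\}$ where $\varphi$ is differentiable, $\varphi(t)=0$ is a global minimum of the nonnegative function $\varphi$, so $\varphi'(t)=0$ directly.
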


Let us admit for the moment the following result that will be useful for our proof.
\begin{lemma}
\label{lemmacont}
The functions $t\longmapsto\e\left[h(X_t)\right]$ and $t\longmapsto\e \left[\m{L}h(X_t)\right]$ are continuous.
\end{lemma}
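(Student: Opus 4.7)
My plan is to combine pointwise almost-sure continuity of $s\mapsto X_s$ at each fixed time with uniform integrability so as to apply dominated convergence. Since $X$ is càdlàg, $X_s\to X_t$ a.s.\ as $s\downarrow t$; for the left limit, $X_s\to X_{t-}$ a.s.\ as $s\uparrow t$, and the jump $X_t-X_{t-}$ vanishes a.s.\ at a deterministic time $t$ because the Poisson random measure $N$ has no fixed atoms (i.e.\ $\p(N(\{t\}\times E)>0)=0$). Hence $X_s\to X_t$ a.s.\ from both sides.

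For the first function, the Lipschitz bound in \ref{bilip} gives $|h(X_s)|\le|h(0)|+M\sup_{r\le T}|X_r|$, and $\sup_{r\le T}|X_r|\in\lp^2$ by Proposition \ref{propriete_1}(i) applied with $p=2$ (which requires only $X_0\in\lp^2$, as assumed in \ref{lip}(ii)). Combined with the continuity of $h$, dominated convergence yields $\e[h(X_{s_n})]\to\e[h(X_t)]$ for every sequence $s_n\to t$, giving continuity of $t\mapsto\e[h(X_t)]$.

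For the second function, the main intermediate step is to check that $\m{L}h$ is continuous with at most quadratic growth $|\m{L}h(x)|\le C(1+x^2)$. The first two summands $b\,h'$ and $\tfrac12\sigma\sigma^\ast h''$ satisfy this thanks to the linear growth of $b,\sigma$ inherited from \ref{lip} and the boundedness of $h',h''$ from \ref{reg}. For the integral term $I(x):=\int_E\phi(x,z)\,\lambda(dz)$ with $\phi(x,z):=h(x+F(x,z))-h(x)-F(x,z)h'(x)$, Taylor's formula produces $|\phi(x,z)|\le\tfrac12\|h''\|_\infty F(x,z)^2$, and \ref{lip}(i) with $p=2$ yields $\int_E F(x,z)^2\,\lambda(dz)\le 2C_2\,x^2+2\int_E F(0,z)^2\,\lambda(dz)$, whence $|I(x)|\le C(1+x^2)$. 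Granted the continuity of $I$, the bound $|\m{L}h(X_s)|\le C(1+\sup_{r\le T}|X_r|^2)\in\lp^1$ provides the required uniform integrability, and dominated convergence concludes that $\e[\m{L}h(X_{s_n})]\to\e[\m{L}h(X_t)]$.

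The principal difficulty is the continuity of $I$, since the Lévy measure $\lambda$ may have infinite mass near $0$ and no fixed $z$-dominating function is immediately available. I would invoke Vitali's generalized dominated convergence: for $x_n\to x$, $\phi(x_n,z)\to\phi(x,z)$ pointwise in $z$ by continuity of $h,h'$ and of $F(\cdot,z)$, and $|\phi(x_n,z)|\le\tfrac12\|h''\|_\infty F(x_n,z)^2$; the identity $F(x_n,z)^2-F(x,z)^2=(F(x_n,z)-F(x,z))(F(x_n,z)+F(x,z))$ together with Cauchy--Schwarz and the $L^2(\lambda)$-Lipschitz estimate for $F$ gives $\int_E F(x_n,z)^2\,\lambda(dz)\to\int_E F(x,z)^2\,\lambda(dz)$, which upgrades the pointwise convergence of $\phi(x_n,\cdot)$ to $L^1(\lambda)$-convergence and thereby delivers continuity of $I$.
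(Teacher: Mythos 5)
Your argument is correct in outline but follows a genuinely different route from the paper. The paper first proves a general statement (Lemma \ref{lemmacont_generale}): for any continuous $\varphi$ with polynomial growth, $t\mapsto\e[\varphi(X_t)]$ is continuous. That proof approximates $\varphi$ uniformly by Lipschitz functions (after a truncation controlled by the integrability of $\sup_{t\le T}|X_t|^p$) and then uses the quantitative time-regularity $\e[|X_t-X_s|^2]\le C|t-s|$ from Proposition \ref{propriete_2} and Remark \ref{rem_X}; Lemma \ref{lemmacont} is then obtained by checking that $h$ and $\m{L}h$ are continuous with linear, resp.\ quadratic, growth. You instead argue pathwise: a.s.\ convergence $X_s\to X_t$ plus domination by $C(1+\sup_{r\le T}|X_r|^2)\in\lp^1$ and dominated convergence. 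Your route avoids the approximation step and the moment estimate in time, at the price of needing the a.s.\ continuity of $X$ at fixed times; the paper's route is quantitative and packaged as a separate lemma precisely because it is invoked for both $h$ and $\m{L}h$. Your Taylor bound $|\phi(x,z)|\le\tfrac12\|h''\|_\infty F(x,z)^2$ is also a genuine improvement on the paper's treatment of the integral term, which bounds the integrand by $C|F(x,z)|$ and therefore implicitly requires $F(0,\cdot)\in L^1(\lambda)$, whereas yours only needs $F(0,\cdot)\in L^2(\lambda)$, which is necessary anyway for the stochastic integral to be defined.

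Two places where your justification is incomplete. First, $X=U+K$, and the absence of fixed atoms of $N$ only kills the jump of $U$ at the deterministic time $t$; you also need $\Delta K_t=0$, i.e.\ the continuity of the deterministic function $K$, which is exactly Proposition \ref{propriete_2}(i) and should be cited. Second, Assumption \ref{lip}(i) is an integrated Lipschitz condition on $F$ (it says $x\mapsto F(x,\cdot)$ is Lipschitz into $L^p(\lambda)$); it does not give pointwise continuity of $F(\cdot,z)$ for fixed $z$, so ``$\phi(x_n,z)\to\phi(x,z)$ pointwise in $z$'' is not automatic. This is repaired by extracting, from the $L^2(\lambda)$-convergence $F(x_n,\cdot)\to F(x,\cdot)$, a $\lambda$-a.e.\ convergent subsequence and running your Vitali/Pratt argument along subsequences (every subsequence then has a further subsequence along which $I(x_{n})\to I(x)$, which suffices). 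To be fair, the paper asserts the continuity of $x\mapsto\m{L}h(x)$ without proof, so you are at least attempting to justify a step the paper glosses over.
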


\begin{lemma}
\label{lemmacont_generale}
If $\varphi$ is a continuous function such that, for some $C\geq 0$ and $p\geq 1$,
$$
\forall x\in\mathbb{R}, \quad |\varphi(x)|\leq C(1+|x|^p),
$$ 
then the function $t\longmapsto\e [\varphi(X_t)]$ is continuous.
\end{lemma}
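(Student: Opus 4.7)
The plan is to establish continuity of $t\mapsto\mathbb{E}[\varphi(X_t)]$ by combining almost sure convergence $X_{t_n}\to X_t$ along any sequence $t_n\to t$ with uniform integrability of the family $\{\varphi(X_{t_n})\}_n$, then applying Vitali's convergence theorem.

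First, I would argue that at any deterministic time $t\in[0,T]$, $X_{t^-}=X_t$ almost surely. The process $X$ is càdlàg, and its jumps can only come from the Poisson integral: indeed, the drift integral and $\int_0^{\cdot}\sigma(X_{s^-})\,dB_s$ have continuous trajectories, and by Proposition~\ref{propriete_2}(i) the reflection process $K$ is $1/2$-Hölder continuous, hence continuous. A jump of the Poisson integral at time $t$ requires $N(\{t\}\times E)>0$, but since the compensator $\lambda(dz)\,ds$ has no atom in the time variable, $\mathbb{P}(N(\{t\}\times E)>0)=0$. Therefore $X_{t^-}=X_t$ almost surely, and for any sequence $t_n\to t$ (from either side), the càdlàg property forces $X_{t_n}\to X_t$ a.s. By continuity of $\varphi$, this yields $\varphi(X_{t_n})\to\varphi(X_t)$ a.s.

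Next, I would establish uniform integrability. Using the polynomial bound $|\varphi(x)|\le C(1+|x|^p)$, pick any $q>p$ for which $X_0\in \mathrm{L}^q$ (this is guaranteed under the working integrability hypotheses, e.g.~\ref{int} with $q$ close to $4$). Proposition~\ref{propriete_1}(i) gives $\mathbb{E}[\sup_{s\le T}|X_s|^q]<\infty$, hence
\begin{equation*}
\sup_n \mathbb{E}\bigl[|\varphi(X_{t_n})|^{q/p}\bigr]\le C'\Bigl(1+\mathbb{E}\bigl[\sup_{s\le T}|X_s|^q\bigr]\Bigr)<\infty,
\end{equation*}
with $q/p>1$, which implies that $\{\varphi(X_{t_n})\}_n$ is uniformly integrable. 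Combining this with the a.s.\ convergence via Vitali's theorem gives $\mathbb{E}[\varphi(X_{t_n})]\to \mathbb{E}[\varphi(X_t)]$, proving continuity.

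The only subtle point is the first step: extracting almost sure (rather than merely in-probability) continuity of $X$ at deterministic times. The key ingredient is the continuity of $K$ from Proposition~\ref{propriete_2}(i), which ensures that the reflection does not introduce any jumps, so that the Poisson random measure is the sole source of discontinuity and the absence of fixed time atoms closes the argument.
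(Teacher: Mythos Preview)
Your proof is correct and takes a genuinely different route from the paper's. The paper proceeds by a two-stage analytic approximation: first it treats continuous $\varphi$ with compact support by uniformly approximating with Lipschitz functions $\varphi_n$ and invoking the increment estimate $\e[|X_t-X_s|^2]\leq C|t-s|$ from Remark~\ref{rem_X}; then it handles the polynomially growing case by truncating $\varphi$ with a smooth cutoff $\theta_n$ and controlling the tail via dominated convergence using $\e[\sup_{t\le T}|X_t|^p]<\infty$. Your argument instead exploits pathwise continuity of $X$ at deterministic times (combining the H\"older continuity of $K$ with the fact that the Poisson integral has no fixed-time jumps) together with uniform integrability and Vitali. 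Your approach is more probabilistic and avoids the double approximation layer; the paper's approach never invokes the absence of jumps at deterministic times and rests purely on $\lp^2$ increment bounds. One minor simplification: as written you need $X_0\in\lp^q$ for some $q>p$, but this is unnecessary since $|\varphi(X_{t_n})|\le C(1+\sup_{s\le T}|X_s|^p)$ is dominated by a single integrable random variable, so dominated convergence applies directly and Vitali (and the extra moment) can be dispensed with.
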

The proof of Lemma \ref{lemmacont_generale} is given in Appendix A.1. We may now proceed to the proof of Proposition \ref{propdensite}.
\begin{proof}
Let $t$ in $[0,T]$. For all positive r, we have
\begin{equation*}
\begin{aligned}
X_{t+r}  &=X_t+\int_t^{t+r} \bigg(b(X_{s^-})-\int_E F(X_{s^-},z)\lambda(dz)\bigg) ds + \int_t^{t+r} \sigma(X_{s^-}) dB_s  + \int_t^{t+r}\int_E F(X_{s^-},z) N(ds,dz) \\&~~~~+ K_{t+r}-K_t.
\end{aligned}
\end{equation*}
Under \ref{reg} and thanks to It\^o's formula we get
\begin{equation*}
\begin{aligned}
h(X_{t+r})-h(X_t)&=\int_t^{t+r}b(X_{s^-})h'(X_{s^-}) ds+\int_t^{t+r} \sigma(X_{s^-})h'(X_{s^-}) dB_s+\int_t^{t+r}\int_E F(X_{s^-},z)h'(X_{s^-}) \tilde{N}(ds,dz)\\&~~~~+\int_t^{t+r} h'(X_{s^-}) dK_s+\frac{1}{2}\int_t^{t+r} \sigma^2(X_{s^-})h''(X_{s^-}) ds\\&~~~~+\int_t^{t+r}\int_E\bigg(h\big(X_{s^-}+F(X_{s^-},z)\big)-h(X_{s^-})-F(X_{s^-},z)h'(X_{s^-})\bigg)N(ds,dz)\\&
= \int_t^{t+r}b(X_{s^-})h'(X_{s^-}) ds+\int_t^{t+r} \sigma(X_{s^-})h'(X_{s^-}) dB_s+\int_t^{t+r}\int_E F(X_{s^-},z)h'(X_{s^-}) \tilde{N}(ds.dz)\\&~~~~+\int_t^{t+r} h'(X_{s^-}) dK_s+\frac{1}{2}\int_t^{t+r} \sigma^2(X_{s^-})h''(X_{s^-}) ds\\&~~~~+\int_t^{t+r}\int_E\bigg(h\big(X_{s^-}+F(X_{s^-})\big)-h(X_{s^-})-F(X_{s^-})h'(X_{s^-})\bigg)\lambda(dz)ds\\&~~~~+\int_t^{t+r}\int_E\bigg(h\big(X_{s^-}+F(X_{s^-},z)\big)-h(X_{s^-})-F(X_{s^-},z)h'(X_{s^-})\bigg)\tilde{N}(ds,dz)\\&
= \int_t^{t+r}\m{L}h(X_{s^-}) ds+\int_t^{t+r} h'(X_{s^-}) dK_s+\int_t^{t+r} \sigma(X_{s^-})h'(X_{s^-}) dB_s\\&~~~~+\int_t^{t+r}\int_E\bigg(h\big(X_{s^-}+F(X_{s^-},z)\big)-h(X_{s^-})\bigg)\tilde{N}(ds,dz),
\end{aligned}
\end{equation*}
where $\m{L}$ is given by $\eqref{L}$. Thus, we obtain
\begin{equation}
\label{ito}
\e\bigg(\int_t^{t+r} h'(X_{s^-}) dK_s\bigg)=\e h(X_{t+r})-\e h(X_t)-\int_t^{t+r}\e\m{L}h(X_{s^-}) ds.
\end{equation}

Suppose, at the one hand, that $\e h(X_t)>0$. Then, by the continuity of $t\longmapsto\e h(X_t)$, we get that there exists a positive $\m{R}$ such that for all $r\in[0,\m{R}]$, $\e h(X_{t+r})>0$. This implies in particular, from the definition of $K$, that $dK([t,t+r])=0$ for all $r$ in $[0,\m{R}]$.

At the second hand, suppose that $\e h(X_t)=0$, then two cases arise. Let us first assume that $\e \m{L}h(X_t)>0$. Hence, we can find a positive $\m{R}'$ such that for all $r\in[0,\m{R}']$, $\e \m{L}h(X_{t+r})>0$. We thus deduce from our Assumptions and $\eqref{ito}$ that $\e h(X_{t+r})>0$ for all $r\in(0,\m{R}']$. Therefore,
$dK([t,t+r])=0$ for all $r\in[0,\m{R}']$. Suppose next that $\e \m{L}h(X_t)\leq0$. By continuity of $t\longmapsto\e \m{L}h(X_t)$, there exists a positive $\m{R}''$ such that for all $r\in[0,\m{R}'']$ it holds $\e \m{L}h(X_{t+r})\leq0$. Since $\e h(X_{t+r})$ must be positive on this set, we could have to compensate and $K_{t+r}$ is then positive for all $r\in[0,\m{R}'']$. Moreover, the compensation must be minimal i.e. such that $\e h(X_{t+r})=0$. Equation $\eqref{ito}$ becomes:
\begin{equation*}
\e\bigg(\int_t^{t+r} h'(X_{s^-}) dK_s\bigg)=-\int_t^{t+r}\e\m{L}h(X_{s^-}) ds,
\end{equation*}
on $[0,\m{R}'']$. Dividing both sides by $r$ and taking the limit $r\longrightarrow 0$ gives (by continuity):
\begin{equation*}
dK_t=-\frac{\e[\m{L}h(X_{t^-})]}{\e[h'(X_{t^-})]}dt.
\end{equation*}
Thus, $dK$ is absolutely continuous w.r.t. the Lebesgue measure with density:
\begin{equation*}
k_t=\frac{(\e[\m{L}h(X_{t^-})])^-}{\e[h'(X_{t^-})]}{\bf{1}}_{\e[h(X_t)]=0}.
\end{equation*}
\end{proof}

\begin{remark}
This justifies, at least under the smoothness Assumption $\ref{reg}$ on the constraint
function $h$, the non-negative hypothesis imposed on $h'$.
\end{remark}

\begin{proof}[Proof of Lemma \ref{lemmacont}]
Under Assumption \ref{bilip}, and by using Lemma \ref{lemmacont_generale}, we obtain the continuity of the function $t\longmapsto\e h(X_t)$.

Under the assumptions \ref{lip}, \ref{bilip} and \ref{reg}, we observe that $x\longmapsto\m{L}h(X_t)$ is a continuous function such that, for all $x\in\mathbb{R}$, there exists constants $C_1$, $C_2$ and $C_3 > 0$,
$$|b(x)h'(x)|\leq C_1(1+|x|),$$
$$|\sigma^2(x)h''(x)|\leq C_2(1+|x|^2),$$
and
\begin{equation*}
\begin{aligned}
\bigg|\int_E\bigg(h\big(x+F(x,z)\big)-h(x)-F(x,z)h'(x)\bigg)\lambda(dz)\bigg|&
\leq C_3\int_E|F(x,z)|\lambda(dz)\\&
\leq C_3\bigg(\int_E|F(x,z)-F(0,z)|\lambda(dz)+\int_E|F(0,z)|\lambda(dz)\bigg)\\&
\leq C_3\int_E|x|\lambda(dz)+C'_3\\&
\leq C_3(1+|x|).
\end{aligned}
\end{equation*}
Finally, by using Lemma \ref{lemmacont_generale}, we conclude the continuity of $t\longmapsto\e \m{L}h(X_t)$.
\end{proof}

\section{Mean reflected SDE as the limit of an interacting reflected particles system.}\label{sec:PMRSDE}

Having in mind the notations defined in the beginning of Section 2 and especially equation $\eqref{K_t}$, we can write the unique solution of the SDE \eqref{eq:main} as:
\begin{equation}
X_t  =X_0+\int_0^t b(X_{s^-}) ds + \int_0^t \sigma(X_{s^-}) dB_s  + \int_0^t\int_E F(X_{s^-},z) \tilde{N}(ds,dz)+\sup\limits_{s\le t} {G}_0(\mu_{s}),
\end{equation}
where $\mu_t$ stands for the law of
$$U_t  =X_0+\int_0^t b(X_{s^-}) ds + \int_0^t \sigma(X_{s^-}) dB_s  + \int_0^t\int_E F(X_{s^-},z) \tilde{N}(ds,dz).$$
We here are interested in the particle approximation of such a system. Our candidates are the particles, for $1\leq i\leq N$,
\begin{equation}
\label{particle_system}
X^i_t  = \bar{X}^i_0+\int_0^t b(X^i_{s^-}) ds + \int_0^t \sigma(X^i_{s^-}) dB^i_s  + \int_0^t\int_E F(X^i_{s^-},z) \tilde{N}^i(ds,dz)+\sup\limits_{s\le t} {G}_0(\mu_{s}^N),
\end{equation}
where $B^i$ are independent Brownian motions, $\tilde{N}^i$ are independent compensated Poisson measure, $\bar{X}^i_0$ are independent copies of $X_0$ and $\mu_s^N$ denotes the empirical distribution at time $s$ of the particles
\begin{equation*}
U^i_t  =\bar{X}^i_0+\int_0^t b(X^i_{s^-}) ds + \int_0^t \sigma(X^i_{s^-}) dB^i_s  + \int_0^t\int_E F(X^i_{s^-},z) \tilde{N}^i(ds,dz),\quad 1\leq i\leq N.
\end{equation*}
namely $\displaystyle\mu^N_s=\frac{1}{N}\sum^N_{i=1}\delta_{U^i_s}$.
It is worth noticing that
\begin{equation*}
G_0(\mu_s^N)=\inf\Bigg\{x\geq0:\frac{1}{N}\sum_{i=1}^{N}h(x+U^i_s) \geq 0\Bigg\}.
\end{equation*}

In order to prove that there is indeed a propagation of chaos effect, we introduce the following independent copies of $X$
\begin{equation*}
\bar X^i_t  =\bar X^i_0+\int_0^t b(\bar X^i_{s^-}) ds + \int_0^t \sigma(\bar X^i_{s^-}) dB^i_s  + \int_0^t\int_E F(\bar X^i_{s^-},z) \tilde{N}^i(ds,dz)+\sup\limits_{s\le t} {G}_0(\mu_{s}),\quad 1\leq i\leq N,
\end{equation*}
and we couple these particles with the previous ones by choosing the same Brownian motions and the same Poisson processes.\\
In order to do so, we introduce the decoupled particles $\bar U^i$, $1\leq i\leq N$: 
\begin{equation*}
\bar U^i_t  =\bar X^i_0+\int_0^t b(\bar X^i_{s^-}) ds + \int_0^t \sigma(\bar X^i_{s^-}) dB^i_s  + \int_0^t\int_E F(\bar X^i_{s^-},z) \tilde{N}^i(ds,dz).
\end{equation*}
Note that for instance the particles $\bar U^i_t$ are i.i.d. and let us denote by $\bar\mu^N$ the empirical measure associated to this system of particles.\\

\begin{remark}
\begin{itemize}
\item[(i)] Under our assumptions, we have $\e\left[h\left(\bar{X}^i_0\right)\right]=\e\left[h(X_0)\right]\geq 0$. However, there is no reason to have 
\begin{equation*}
	\frac{1}{N}\, \sum_{i=1}^N h\left(\bar{X}^i_0\right) \geq 0,
\end{equation*}
even if $N$ is large. As a consequence,
\begin{equation*}
	{G}_0(\mu_{0}^N)=\inf\left\{x\geq 0:\frac{1}{N}\sum_{i=1}^{N}h\left(x+\bar{X}^i_0\right) \geq 0\right\}
\end{equation*}
is not necessarily equal to 0. As a byproduct, we have $X^i_0 = \bar{X}^i_0 + {G}_0(\mu_{0}^N)$ and the non decreasing process $\sup_{s\le t} {G}_0(\mu_{s}^N)$ is not equal to 0 at time $t=0$. Written in this way, the particles defined by \eqref{particle_system} can not be interpreted as the solution of a reflected SDE. To view the particles as the solution of a reflected SDE, instead of \eqref{particle_system} one has to solve
\begin{gather*}
	X^i_t  = \bar{X}^i_0 + {G}_0(\mu_{0}^N) +\int_0^t b(X^i_{s^-}) ds + \int_0^t \sigma(X^i_{s^-}) dB^i_s  + \int_0^t\int_E F(X^i_{s^-},z) \tilde{N}^i(ds,dz)+ K^N_t, \\
	\frac{1}{N}\sum_{i=1}^{N}h\left(X^i_t\right) \geq 0, \qquad \int_0^t \frac{1}{N}\sum_{i=1}^{N}h\left(X^i_t\right)\, dK^N_s,
\end{gather*}
with $K^N$ non decreasing and $K^N_0=0$. Since, we do not use this point in the sequel, we will work with the form \eqref{particle_system}.

\item[(ii)] Following the same proof of Theorem $\ref{thrm_exacte}$, it is easy to demonstrate the existence and the uniqueness of a solution for the particle approximated system $\eqref{particle_system}$.
\end{itemize}
\end{remark}

We have the following result concerning the approximation \eqref{eq:main} by interacting particles system.

\begin{theorem}
\label{cv_part}
Let Assumptions \ref{lip} and \ref{bilip} hold and $T>0$.
\begin{enumerate}
\item [(i)] Under Assumption \ref{int}, there exists a constant $C$ depending on $b$, $\sigma$ and $F$ such that, for each $j\in\{1,\ldots,N\}$,
\begin{equation*}
\e\bigg[\sup_{s\leq T}|X^j_s-\bar X^j_s|^2\bigg]\leq C\exp\bigg(C\bigg(1+\frac{M^2}{m^2}\bigg)(1+T^2)\bigg)\frac{M^2}{m^2}N^{-1/2}.
\end{equation*}

\item [(ii)] If Assumption \ref{reg} is in force, then there exists a constant $C$ depending on $b$, $\sigma$ and $F$ such that, for each $j\in\{1,\ldots,N\}$,
\begin{equation*}
\e\bigg[\sup_{s\leq T}|X^j_s-\bar X^j_s|^2\bigg]\leq C\exp\bigg(C\bigg(1+\frac{M^2}{m^2}\bigg)(1+T^2)\bigg)\frac{1+T^2}{m^2}\bigg(1+\e\bigg[\sup_{s\leq T}|X_T|^2\bigg]\bigg)N^{-1}.
\end{equation*}
\end{enumerate}
\end{theorem}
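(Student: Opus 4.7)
The plan is to write the SDE satisfied by the difference $X^j_t - \bar X^j_t$. Since both particles are driven by the same Brownian motion $B^j$ and the same compensated Poisson measure $\tilde N^j$, the drift, diffusion, and jump parts produce only Lipschitz contributions in $X^j - \bar X^j$. The difference of reflection terms $\sup_{s\le t}G_0(\mu^N_s) - \sup_{s\le t}G_0(\mu_s)$ is controlled by $\sup_{s\le t}|G_0(\mu^N_s) - G_0(\mu_s)|$ using $|\sup f - \sup g|\le\sup|f-g|$. Applying Cauchy--Schwarz, Doob and BDG as in the proof of Theorem~\ref{thrm_exacte} yields
\begin{equation*}
\e\Bigl[\sup_{u\le t}|X^j_u-\bar X^j_u|^2\Bigr]
\le C\int_0^t \e\Bigl[\sup_{u\le r}|X^j_u-\bar X^j_u|^2\Bigr] dr
+ C\,\e\Bigl[\sup_{s\le t}\bigl|G_0(\mu^N_s)-G_0(\mu_s)\bigr|^2\Bigr],
\end{equation*}
so that Gronwall's lemma reduces the problem to controlling the last expectation; this is where the exponential factor $\exp(C(1+M^2/m^2)(1+T^2))$ will appear.

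To estimate the reflection error, I would insert the intermediate empirical measure $\bar\mu^N_s=\frac{1}{N}\sum_i\delta_{\bar U^i_s}$ and split
\begin{equation*}
G_0(\mu^N_s)-G_0(\mu_s)=\bigl(G_0(\mu^N_s)-G_0(\bar\mu^N_s)\bigr)+\bigl(G_0(\bar\mu^N_s)-G_0(\mu_s)\bigr).
\end{equation*}
The \emph{coupling term} is bounded using Lemma~\ref{wiss} by $\frac{M}{m}W_1(\mu^N_s,\bar\mu^N_s)\le\frac{M}{m}\frac{1}{N}\sum_i|U^i_s-\bar U^i_s|$, and by exchangeability together with the Lipschitz property of $b,\sigma,F$ this yields a term of the form $C\frac{M^2}{m^2}\int_0^s \e[\sup_{u\le r}|X^j_u-\bar X^j_u|^2]\,dr$, which is reabsorbed into the Gronwall inequality.

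The heart of the argument is the \emph{empirical term} $\e[\sup_{s\le T}|G_0(\bar\mu^N_s)-G_0(\mu_s)|^2]$. For part (i) I would apply the $W_1$ bound of Lemma~\ref{wiss}: $|G_0(\bar\mu^N_s)-G_0(\mu_s)|\le\frac{M}{m}W_1(\bar\mu^N_s,\mu_s)$. A pointwise-in-$s$ estimate $\e[W_1(\bar\mu^N_s,\mu_s)^2]\le CN^{-1/2}$ follows from the classical one-dimensional empirical Wasserstein rate (with the moment bound from Proposition~\ref{propriete_1} and Assumption~\ref{int} supplying the required integrability). Then the supremum in $s$ is handled through Kolmogorov's continuity criterion, using the time-increment estimate $\e[|U_t-U_s|^p]\le C|t-s|$ from Proposition~\ref{propriete_2}(ii) to control the modulus of continuity of $s\mapsto W_1(\bar\mu^N_s,\mu_s)$.

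For part (ii), under Assumption~\ref{reg} I would use the sharper bound from Lemma~\ref{wiss}. Since $\bar G_0(\mu_s)$ satisfies $\int h(\bar G_0(\mu_s)+u)\mu_s(du)=0$, one has
\begin{equation*}
|G_0(\bar\mu^N_s)-G_0(\mu_s)|
\le \frac{1}{m}\biggl|\frac{1}{N}\sum_{i=1}^N \Phi_s(\bar U^i_s)\biggr|,
\qquad \Phi_s(u):=h(\bar G_0(\mu_s)+u),
\end{equation*}
and for each fixed $s$ the right-hand side is $\frac{1}{m}$ times the empirical mean of $N$ i.i.d. centered random variables, whose second moment is of order $\|h'\|_\infty^2 \e[|\bar U^1_s|^2]/N\le C/N$ by Remark~\ref{rem_U_1}. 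To promote this to a uniform-in-$s$ bound, I would apply Kolmogorov's continuity criterion to the process $s\mapsto\frac{1}{N}\sum_i\Phi_s(\bar U^i_s)$: the smoothness of $h$ and Proposition~\ref{propriete_2}(iii) (which gives $\e[|U_s-U_r|^p|U_t-U_s|^p]\le C|t-r|^2$) produce the required fourth-moment increment estimate, yielding $\e[\sup_{s\le T}(\cdot)^2]\le C(1+T^2)(1+\e[\sup_{s\le T}|X_T|^2])N^{-1}$.

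The main obstacle is precisely this uniform-in-time control: the pointwise variance bounds are easy, but passing to the supremum over $s\in[0,T]$ requires a Hölder-type regularity of the empirical process in $s$, which is why Assumption~\ref{int} with $p>4$ is needed in order to apply Kolmogorov's criterion with the joint-increment estimate Proposition~\ref{propriete_2}(iii). Once both sides of the decomposition are controlled, Gronwall's lemma closes the estimate and delivers the stated rates $N^{-1/2}$ and $N^{-1}$ in parts (i) and (ii) respectively.
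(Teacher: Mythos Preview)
Your overall architecture---writing the difference, using $|\sup f-\sup g|\le\sup|f-g|$, inserting the intermediate empirical measure $\bar\mu^N$, absorbing the coupling term $G_0(\mu^N_s)-G_0(\bar\mu^N_s)$ via Lemma~\ref{wiss} and exchangeability, and closing with Gronwall---is exactly what the paper does. For part~(i) the paper also bounds the empirical term by $\frac{M^2}{m^2}\e\bigl[\sup_{s\le t}W_1^2(\bar\mu^N_s,\mu_s)\bigr]$ and then invokes a uniform-in-time empirical Wasserstein estimate, so your plan there is on target.

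The real divergence is in part~(ii). You propose to get the uniform bound on $R_N(s):=\int h(\bar G_0(\mu_s)+\cdot)(d\bar\mu^N_s-d\mu_s)$ by a pointwise $O(N^{-1})$ variance estimate promoted via Kolmogorov/Chentsov using Proposition~\ref{propriete_2}(iii). The paper instead applies It\^o's formula to $s\mapsto h(\bar G_0(\mu_s)+\bar U^i_s)$ (after showing $s\mapsto\bar G_0(\mu_s)$ is absolutely continuous), obtaining a semimartingale decomposition
\[
R_N(s)=\frac{1}{N}\sum_{i=1}^N h(V^i_0)+\int_0^s\Bigl(\frac{1}{N}\sum_{i=1}^N C^i(r)\Bigr)dr+M_N(s)+L_N(s),
\]
with $C^i$ centered i.i.d.\ and $M_N,L_N$ martingales driven by $B^i,\tilde N^i$ respectively. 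The supremum is then handled directly: the time integral by $\int_0^t|\cdot|\,dr$ and the martingales by Doob's inequality; each contributes $O(N^{-1})$ by independence.

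This difference is not cosmetic. Your Kolmogorov route relies on Proposition~\ref{propriete_2}(iii), which requires Assumption~\ref{int} ($p>4$ moments); but part~(ii) of the theorem does \emph{not} assume~\ref{int}, only the smoothness Assumption~\ref{reg}. The It\^o/Doob argument needs only $h\in C^2_b$ and square integrability, so it proves the statement as written, whereas your approach would yield a weaker theorem. There is also the technical wrinkle that $R_N$ is c\`adl\`ag (the $\bar U^i$ have jumps), so the classical Kolmogorov continuity criterion does not apply; you would need a genuine Chentsov-type argument, and extracting the clean $N^{-1}$ constant from such a chaining is considerably less transparent than reading it off the martingale decomposition.
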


\begin{proof}
Let $t>0$. We have, for $r\leq t$,
\begin{equation*}
\begin{aligned}
\big|X^j_r-\bar{X}^j_r\big|& \leq \bigg|\int_0^r b(X^j_{s^-})- b(\bar{X}^j_{{s}^-})ds\bigg| + \bigg|\int_0^r \bigg(\sigma(X^j_{s^-})-\sigma(\bar{X}^j_{{s}^-})\bigg) dB^i_s\bigg|  \\&~~
~~+ \bigg|\int_0^r\int_E\bigg(F(X^j_{s^-},z) -F(\bar{X}^j_{{s}^-},z)\bigg) \tilde{N}^j(ds,dz)\bigg| +  \big|\sup_{s\le r}{G}_0(\mu_s^N)-\sup_{s\le r}{G}_0({\mu}_{s})\big|.
\end{aligned}
\end{equation*}
Taking into account the fact that
\begin{equation*}
\begin{aligned}
\big|\sup_{s\le r}{G}_0(\mu_s^N)-\sup_{s\le r}{G}_0({\mu}_{s})\big|&
\leq \sup_{s\le r}\big|{G}_0(\mu_s^N)-{G}_0({\mu}_{s})\big| 
\leq \sup_{s\le t}\big|{G}_0(\mu_s^N)-{G}_0({\mu}_{s})\big|\\& 
\leq \sup_{s\le t}\big|{G}_0(\mu_s^N)-{G}_0({\bar\mu}^N_{s})\big|+\sup_{s\le t}\big|{G}_0(\bar\mu_s^N)-{G}_0({\mu}_{s})\big|,
\end{aligned}
\end{equation*}
we get the inequality
\begin{equation}
\label{inq1_proof_part}
\begin{aligned}
\sup_{r\le t}\big|X^j_r-\bar{X}^j_r\big|& \leq I_1+\sup_{s\le t}\big|{G}_0(\mu_s^N)-{G}_0({\bar\mu}^N_{s})\big|+\sup_{s\le t}\big|{G}_0(\bar\mu_s^N)-{G}_0({\mu}_{s})\big|,
\end{aligned}
\end{equation}
where we have set
\begin{equation*}
\begin{aligned}
I_1&=\int_0^t \big|b(X^j_{s^-})- b(\bar{X}^j_{{s}^-})\big|ds + \sup_{r\le t}\bigg|\int_0^r \bigg(\sigma(X^j_{s^-})-\sigma(\bar{X}^j_{{s}^-})\bigg) dB^i_s\bigg|\\&~~~~+ \sup_{r\le t}\bigg|\int_0^r\int_E\bigg(F(X^j_{s^-},z) -F(\bar{X}^j_{{s}^-},z)\bigg) \tilde{N}^j(ds,dz)\bigg|.
\end{aligned}
\end{equation*}

On the one hand we have, using Assumption \ref{lip}, Doob and Cauchy-Schwartz inequalities
\begin{equation*}
\begin{aligned}
\e\big[\big|I_1\big|^2\big]&
\leq C\Bigg\{\e \bigg[t\int_0^t\bigg|b(X^j_{s^-})- b(\bar{X}^j_{{s}^-}) \bigg|^2ds\bigg] + \e\bigg[\int_0^t \bigg|\sigma(X^j_{s^-})-\sigma(\bar{X}^j_{{s}^-})\bigg|^2 ds\bigg]\\&~~
~~+\e \bigg[\int_0^t\int_E\bigg|F(X^j_{s^-},z) -F(\bar{X}^j_{{s}^-},z)\bigg|^2 \lambda(dz)ds\bigg]\Bigg\} \\&
\leq C\Bigg\{tC_1\int_0^t \e\bigg[\big|X^j_s-\bar{X}^j_{{s}}\big|^2\bigg] ds+ C_1\int_0^t \e\bigg[\big|X^j_s-\bar{X}^j_{{s}}\big|^2\bigg] ds\\&~~
~~+C_1\int_0^t \e\bigg[\big|X^j_s-\bar{X}^j_{{s}}\big|^2\bigg] ds\Bigg\}\\&
\leq C(1+t)\int_0^t \e\bigg[\big|X^j_s-\bar{X}^j_{{s}}\big|^2\bigg] ds.
\end{aligned}
\end{equation*}
where $C$ depends only on $b$, $\sigma$ and $F$ and may change from line to line.

On the other hand, by using Lemma \ref{wiss},
\begin{equation*}
\sup_{s\le t}\big|{G}_0(\mu_s^N)-{G}_0({\bar\mu}^N_{s})\big|
\leq \frac{M}{m}\sup_{s\le t}\frac{1}{N}\sum_{i=1}^{N}\big|U^i_s-\bar{U}^i_{{s}}\big|
\leq \frac{M}{m}\frac{1}{N}\sum_{i=1}^{N}\sup_{s\le t}\big|U^i_s-\bar{U}^i_{{s}}\big|,
\end{equation*}
and Cauchy-Schwartz inequality gives, since the variables are exchangeable,
\begin{equation*}
\e\bigg[\sup_{s\le t}\big|{G}_0(\mu_s^N)-{G}_0({\bar\mu}^N_{s})\big|^2\bigg]
\leq \frac{M^2}{m^2}\frac{1}{N}\sum_{i=1}^{N}\e\bigg[\sup_{s\le t}\big|U^i_s-\bar{U}^i_{{s}}\big|^2\bigg]
= \frac{M^2}{m^2}\e\bigg[\sup_{s\le t}\big|U^j_s-\bar{U}^j_{{s}}\big|^2\bigg].
\end{equation*}
Since
\begin{equation*}
U^j_s-\bar{U}^j_{{s}}=\int_0^s (b(X^j_{r^-})-b(\bar X^j_{r^-})) dr + \int_0^s (\sigma(X^j_{r^-})-\sigma(\bar X^j_{r^-})) dB^j_r  + \int_0^s\int_E (F(X^j_{r^-},z)-F(\bar X^j_{r^-}),z) \tilde{N}^j(dr,dz),
\end{equation*}
the same computations as we did before lead to
\begin{equation*}
\e\bigg[\sup_{s\le t}\big|{G}_0(\mu_s^N)-{G}_0({\bar\mu}^N_{s})\big|^2\bigg]
\leq C \frac{M^2}{m^2}(1+t)\int_0^t \e\bigg[\big|X^j_s-\bar{X}^j_{{s}}\big|^2\bigg] ds.
\end{equation*}
Hence, with the previous estimates we get, coming back to $\eqref{inq1_proof_part}$,
\begin{equation*}
\begin{aligned}
\e\bigg[\sup_{r\le t}\big|X^j_r-\bar{X}^j_r\big|^2\bigg]&
\leq K\int_0^t \e\bigg[\big|X^j_s-\bar{X}^j_{{s}}\big|^2\bigg] ds+4\e\bigg[\sup_{s\le t}\big|{G}_0(\bar\mu_s^N)-{G}_0({\mu}_{s})\big|^2\bigg]\\&
\leq K\int_0^t \e\bigg[\sup_{r\le s}\big|X^j_r-\bar{X}^j_{{r}}\big|^2\bigg] ds+4\e\bigg[\sup_{s\le t}\big|{G}_0(\bar\mu_s^N)-{G}_0({\mu}_{s})\big|^2\bigg],
\end{aligned}
\end{equation*}
where $K=C(1+t)(1+M^2/m^2)$. Thanks to Gronwall's Lemma
\begin{equation*}
\begin{aligned}
\e\bigg[\sup_{r\le t}\big|X^j_r-\bar{X}^j_r\big|^2\bigg]&
\leq Ce^{Kt}\e\bigg[\sup_{s\le t}\big|{G}_0(\bar\mu_s^N)-{G}_0({\mu}_{s})\big|^2\bigg].
\end{aligned}
\end{equation*}

By Lemma \ref{wiss} we know that
\begin{equation*}
\begin{aligned}
\e\bigg[\sup_{s\le t}\big|{G}_0(\bar\mu_s^N)-{G}_0({\mu}_{s})\big|^2\bigg]
\leq {1\over{m^2}}\e\bigg[\sup_{s\le t}\left|\int h(\bar{G}_0(\mu_s)+\cdot) (d\bar\mu_s^N-d\mu_s)\right|^2\bigg],
\end{aligned}
\end{equation*}
from which we deduce that
\begin{equation}
\label{inq2_proof_part}
\begin{aligned}
\e\bigg[\sup_{r\le t}\big|X^j_r-\bar{X}^j_r\big|^2\bigg]&
\leq Ce^{Kt}{1\over{m^2}}\e\bigg[\sup_{s\le t}\left|\int h(\bar{G}_0(\mu_s)+\cdot) (d\bar\mu_s^N-d\mu_s)\right|^2\bigg].
\end{aligned}
\end{equation}
\begin{proof}[Proof of (i)]\renewcommand{\qedsymbol}{}
Since the function $h$ is, at least, a Lipschitz function, we understand that the rate of convergence follows from the convergence of empirical measure of i.i.d. diffusion processes.
The crucial point here is that we consider a uniform (in time) convergence, which may possibly damage the usual rate of convergence. We will see that however here, we are able to conserve
this optimal rate. Indeed, in full generality (i.e. if we only suppose that \ref{bilip} holds) we get that:
\begin{equation*}
\begin{aligned}
{1\over{m^2}}\e\bigg[\sup_{s\le t}\left|\int h(\bar{G}_0(\mu_s)+\cdot) (d\bar\mu_s^N-d\mu_s)\right|^2\bigg]
\leq \frac{M^2}{m^2}\e\bigg[\sup_{s\le t}W_1^2(\bar{\mu}_s^N,\mu_s)\bigg].
\end{aligned}
\end{equation*}
Thanks to the additional Assumption \ref{int} and to Remark \ref{rem_U_1} and Proposition \ref{propriete_2}, we will adapt and simplify the proof of Theorem 10.2.7 of \cite{RR98} using recent results about the control Wasserstein distance of empirical measures of i.i.d. sample to the true law by \cite{FG15}, to obtain
\begin{equation*}
\e\bigg[\sup_{s\le 1}W_1^2(\bar{\mu}_s^N,\mu_s)\bigg] \leq CN^{-1/2}.
\end{equation*}
The reader can refer to the work on (\cite{BCGL17}, Theorem 3.2, Proof of (i)) in order to find this result.
\end{proof}

\begin{proof}[Proof of (ii)]\renewcommand{\qedsymbol}{}
In the case where $h$ is a twice continuously differentiable function with bounded derivatives (i.e. under \ref{reg}), we succeed to take benefit from the fact that $\bar\mu^N$ is an empirical measure associated to i.i.d. copies of diffusion process, in particular we can get rid of the supremum in time. In view of $\eqref{inq2_proof_part}$, we need a sharp estimate of
\begin{equation}
\label{more order}
\e\bigg[\sup_{s\le t}\left|\int h(\bar{G}_0(\mu_s)+\cdot) (d\bar\mu_s^N-d\mu_s)\right|^2\bigg].
\end{equation}
Let us first observe that $s\longmapsto\bar{G}_0(\mu_{s})$ is locally Lipschitz continuous. Indeed, since by definition $H(\bar{G}_0(\mu_{t}),\mu_t)=0$, if $s<t$, using $\eqref{Hbilip}$,

\begin{equation*}
\begin{aligned}
|\bar{G}_0(\mu_{s})-\bar{G}_0(\mu_{t})|&\leq \frac{1}{m}|H(\bar{G}_0(\mu_{s}),\mu_t)-H(\bar{G}_0(\mu_{t}),\mu_t)|\\&=\frac{1}{m}|H(\bar{G}_0(\mu_{s}),\mu_t)|,\\&=\frac{1}{m}|\e[h(\bar{G}_0(\mu_{s})+U_t)]|\\&
=\frac{1}{m}\bigg|\e\bigg[h\bigg(\bar{G}_0(\mu_{s})+U_s+\int_{s}^{t}b(X_{r^-})dr+\int_{s}^{t}\sigma(X_{r^-})dB_r+\int_{s}^{t}\int_E F(X_{r^-},z)\tilde{N}(dr,dz)\bigg)\bigg]\bigg|.
\end{aligned}
\end{equation*}

We get from It\^o's formula, setting
\begin{equation*}
\bar{\m{L}}_yf(x):=b(y)\frac{\partial}{\partial x}f(x)+\frac{1}{2}\sigma\sigma^*(y)\frac{\partial^2}{\partial x^2}f(x)+\int_E\bigg(f\big(x+F(y,z)\big)-f(x)-F(y,z)f'(x)\bigg)\lambda(dz),
\end{equation*}
\begin{equation*}
\begin{aligned}
h(\bar{G}_0(\mu_{s})+U_t)&=h(\bar{G}_0(\mu_{s})+U_s)+\int_s^tb\big(X_{r^-}\big)h'\big(\bar{G}_0(\mu_{s})+U_{r^-}\big)dr+\int_s^t\sigma\big(X_{r^-}\big)h'\big(\bar{G}_0(\mu_{s})+U_{r^-}\big)dB_r\\&~~~~+\int_s^t\int_E F\big(X_{r^-},z\big)h'\big(\bar{G}_0(\mu_{s})+U_{r^-}\big)\tilde{N}(dr,dz)+\frac{1}{2}\int_s^t\sigma^2\big(X_{r^-}\big)h''\big(\bar{G}_0(\mu_{s})+U_{r^-}\big)dr\\&~~~~+\int_s^t\int_E m(r,z)\lambda(dz)dr+\int_s^t\int_E m(r,z) \tilde{N}(dr,dz),
\end{aligned}
\end{equation*}
with
$$m(r,z)=\bigg(h\big(\bar{G}_0(\mu_{s})+U_{r^-}+F(X_{r^-},z)\big)-h\big(\bar{G}_0(\mu_{s})+U_{r^-}\big)-F\big(X_{r^-},z\big)h'\big(\bar{G}_0(\mu_{s})+U_{r^-}\big)\bigg).$$
Thus, we obtain
\begin{equation*}
\begin{aligned}
h(\bar{G}_0(\mu_{s})+U_t)&
=h(\bar{G}_0(\mu_{s})+U_s)+\int_s^t\bar{\m{L}}_{X_{r^-}}h(\bar{G}_0(\mu_{s})+U_{r^-})dr+\int_s^t\sigma(X_{r^-})h'(\bar{G}_0(\mu_{s})+U_{r^-})dB_r\\&~~~~+\int_s^t\int_E\bigg(h\big(\bar{G}_0(\mu_{s})+U_{r^-}+F(X_{r^-},z)\big)-h\big(\bar{G}_0(\mu_{s})+U_{r^-}\big)\bigg)\tilde{N}(dr,dz),
\end{aligned}
\end{equation*}
and so
\begin{equation*}
\begin{aligned}
\e[h(\bar{G}_0(\mu_{s})+U_t)]&=\e[h(\bar{G}_0(\mu_{s})+U_s)]+\int_s^t\e[\bar{\m{L}}_{X_{r^-}}h(\bar{G}_0(\mu_{s})+U_{r^-})]dr\\&
=H(\bar{G}_0(\mu_{s}),\mu_s)+\int_s^t\e[\bar{\m{L}}_{X_{r^-}}h(\bar{G}_0(\mu_{s})+U_{r^-})]dr\\&
=\int_s^t\e[\bar{\m{L}}_{X_{r^-}}h(\bar{G}_0(\mu_{s})+U_{r^-})]dr.
\end{aligned}
\end{equation*}
Since $h$ has bounded derivatives and $\sup_{s\leq T}|X_s|$ is a square integrable random variable for each $T > 0$ (see Proposition \ref{propriete_1}), the result follows easily.

Now, we deal with \eqref{more order}.\\
Let us denote by $\psi$ the Radon-Nikodym derivative of $\bar{G}_0(\mu)$. By definition, we have, denoting by $V^i$ the semi-martingale $s\longmapsto \bar{G}_0(\mu_s)+\bar{U}_s^i$, since $\bar{U}^i$ are independent copies of $U$,
\begin{equation*}
\begin{aligned}
R_N(s):=\int h(\bar{G}_0(\mu_s)+\cdot) (d\bar\mu_s^N-d\mu_s)&
=\frac{1}{N}\sum_{i=1}^N h\big(\bar{G}_0(\mu_s)+\bar{U}_s^i\big)-\e\big[h\big(\bar{G}_0(\mu_s)+{U}_s\big)\big]\\&
=\frac{1}{N}\sum_{i=1}^N \big\{h\big(\bar{G}_0(\mu_s)+\bar{U}_s^i\big)-\e\big[h\big(\bar{G}_0(\mu_s)+\bar{U}_s^i\big)\big]\big\}\\&
=\frac{1}{N}\sum_{i=1}^N \big\{h\big(V^i_s\big)-\e\big[h\big(V_s^i\big)\big]\big\},
\end{aligned}
\end{equation*}
It follows from It\^o's formula
\begin{equation*}
\begin{aligned}
h\big(V^i_s\big)&=h\big(V^i_0\big)+\int_{0}^{s}h'\big(V^i_{r^-}\big)d\bar{G}_0\big(\mu_r\big)+\int_{0}^{s}b\big(\bar{X}^i_{r^-}\big)h'\big(V^i_{r^-}\big)dr+\int_{0}^{s}\sigma\big(\bar{X}^i_{r^-}\big)h'\big(V^i_{r^-}\big)dB^i_r\\&~~~~+\int_{0}^{s}\int_EF\big(\bar{X}^i_{r^-},z\big)h'\big(V^i_{r^-}\big)\tilde{N}^i(dr,dz)+\frac{1}{2}\int_{0}^{s}\sigma^2\big(\bar{X}^i_{r^-}\big)h''\big(V^i_{r^-}\big)dr\\&~~~~+\int_0^{s}\int_E\bigg(h\big(V^i_{r^-}+F(\bar{X}^i_{r^-},z)\big)-h(V^i_{r^-})-F(\bar{X}^i_{r^-},z)h'(V^i_{r^-})\bigg)\lambda(dz)dr\\&~~~~+\int_0^s\int_E\bigg(h\big(V^i_{r^-}+F(\bar{X}^i_{r^-},z)\big)-h(V^i_{r^-})-F(\bar{X}^i_{r^-},z)h'(V^i_{r^-})\bigg)\tilde{N}^i(dr,dz)\\&
=h\big(V^i_0\big)+\int_{0}^{s}h'\big(V^i_{r^-}\big)\psi_r dr+\int_{0}^{s}b\big(\bar{X}^i_{r^-}\big)h'\big(V^i_{r^-}\big)dr+\frac{1}{2}\int_{0}^{s}\sigma^2\big(\bar{X}^i_{r^-}\big)h''\big(V^i_{r^-}\big)dr\\&~~~~+\int_0^{s}\int_E\bigg(h\big(V^i_{r^-}+F(\bar{X}^i_{r^-},z)\big)-h(V^i_{r^-})-F(\bar{X}^i_{r^-},z)h'(V^i_{r^-})\bigg)\lambda(dz)dr\\&~~~~+\int_{0}^{s}\sigma\big(\bar{X}^i_{r^-}\big)h'\big(V^i_{r^-}\big)dB^i_r+\int_0^s\int_E\bigg(h\big(V^i_{r^-}+F(\bar{X}^i_{r^-},z)\big)-h(V^i_{r^-})\bigg)\tilde{N}^i(dr,dz)\\&
=h\big(V^i_0\big)+\int_{0}^{s}h'\big(V^i_{r^-}\big)\psi_r dr+\int_{0}^{s}\bar{\m{L}}_{\bar{X}^i_{r^-}}h\big(V^i_{r^-}\big)dr+\int_{0}^{s}h'\big(V^i_{r^-}\big)\sigma\big(\bar{X}^i_{r^-}\big)dB^i_r\\&~~~~+\int_{0}^{s}\int_E\bigg(h\big(V^i_{r^-}+F(\bar{X}^i_{r^-},z)\big)-h\big(V^i_{r^-}\big)\bigg)\tilde{N}^i(dr,dz)\\&
=h\big(V^i_0\big)+\int_{0}^{s}\big\{h'\big(V^i_{r^-}\big)\psi_r +\bar{\m{L}}_{\bar{X}^i_{r^-}}h\big(V^i_{r^-}\big)\big\}dr+\int_{0}^{s}h'\big(V^i_{r^-}\big)\sigma\big(\bar{X}^i_{r^-}\big)dB^i_r\\&~~~~+\int_{0}^{s}\int_E\bigg(h\big(V^i_{r^-}+F(\bar{X}^i_{r^-},z)\big)-h\big(V^i_{r^-}\big)\bigg)\tilde{N}^i(dr,dz),
\end{aligned}
\end{equation*}
Taking expectation gives
\begin{equation*}
\begin{aligned}
\e\bigg[h\big(V^i_s\big)\bigg]&
=\e\bigg[h\big(V^i_0\big)\bigg]+\int_{0}^{s}\e\big[h'\big(V^i_{r^-}\big)\psi_r +\bar{\m{L}}_{\bar{X}^i_{r^-}}h\big(V^i_{r^-}\big)\big]dr\\&
=H(\bar{G}_0(\mu_{0}),\mu_0)+\int_{0}^{s}\e\big[h'\big(V^i_{r^-}\big)\psi_r +\bar{\m{L}}_{\bar{X}^i_{r^-}}h\big(V^i_{r^-}\big)\big]dr\\&
=0+\int_{0}^{s}\e\big[h'\big(V^i_{r^-}\big)\psi_r +\bar{\m{L}}_{\bar{X}^i_{r^-}}h\big(V^i_{r^-}\big)\big]dr.
\end{aligned}
\end{equation*}
We deduce immediately that
\begin{equation*}
\begin{aligned}
R_N(s)&=\frac{1}{N}\sum_{i=1}^N h(V^i_0)+\frac{1}{N}\sum_{i=1}^N \int_{0}^{s}C^i(r) dr +M_N(s)+L_N(s)\\&
=\frac{1}{N}\sum_{i=1}^N h(V^i_0)+ \int_{0}^{s}\bigg(\frac{1}{N}\sum_{i=1}^N C^i(r)\bigg) dr +M_N(s)+L_N(s),
\end{aligned}
\end{equation*}
where we have set 
$$C^i(r)=h'\big(V^i_{r^-}\big)\psi_r +\bar{\m{L}}_{\bar{X}^i_{r^-}}h\big(V^i_{r^-}\big)-\e\big[h'\big(V^i_{r^-}\big)\psi_r +\bar{\m{L}}_{\bar{X}^i_{r^-}}h\big(V^i_{r^-}\big)\big],$$
$$M_N(s)=\frac{1}{N}\sum_{i=1}^N\int_{0}^{s}h'\big(V^i_{r^-}\big)\sigma\big(\bar{X}^i_{r^-}\big)dB^i_r,$$
$$L_N(s)=\frac{1}{N}\sum_{i=1}^N\int_{0}^{s}\int_E\Big(h\big(V^i_{r^-}+F(\bar{X}^i_{r^-},z)\big)-h\big(V^i_{r^-}\big)\Big)\tilde{N}^i(dr,dz).$$

As a byproduct,
\begin{equation*}
\begin{aligned}
\sup_{s\le t}|R_N(s)|&
\leq\bigg|\frac{1}{N}\sum_{i=1}^N h(V^i_0)\bigg|+ \sup_{s\le t}\int_{0}^{s}\bigg|\frac{1}{N}\sum_{i=1}^N C^i(r)\bigg| dr +\sup_{s\le t}|M_N(s)|+\sup_{s\le t}|L_N(s)|\\&
\leq \bigg|\frac{1}{N}\sum_{i=1}^N h(V^i_0)\bigg|+\int_{0}^{t}\bigg|\frac{1}{N}\sum_{i=1}^N C^i(r)\bigg| dr +\sup_{s\le t}|M_N(s)|+\sup_{s\le t}|L_N(s)|.
\end{aligned}
\end{equation*}

We get, using Cauchy-Schwartz inequality, since $U^i$ and $\bar{X}^i$ are i.i.d,
\begin{equation*}
\begin{aligned}
\e\bigg[\sup_{s\le t}|R_N(s)|^2\bigg]&
\leq 4\Bigg\{\mathbb{V}\bigg[\frac{1}{N}\sum_{i=1}^N h(V^i_0)\bigg]+\e\bigg[\bigg(\int_{0}^{t}\bigg|\frac{1}{N}\sum_{i=1}^N C^i(r)\bigg| dr\bigg)^2\bigg] \\&~~~~+\e\bigg[\sup_{s\le t}|M_N(s)|^2\bigg]+\e\bigg[\sup_{s\le t}|L_N(s)|^2\bigg]\Bigg\}\\&
\leq 4\Bigg\{\mathbb{V}\bigg[\frac{1}{N}\sum_{i=1}^N h(V^i_0)\bigg]+t\e\bigg[\int_{0}^{t}\bigg|\frac{1}{N}\sum_{i=1}^N C^i(r)\bigg|^2 dr\bigg] \\&~~~~+\e\bigg[\sup_{s\le t}|M_N(s)|^2\bigg]+\e\bigg[\sup_{s\le t}|L_N(s)|^2\bigg]\Bigg\}\\&
= 4\Bigg\{\mathbb{V}\bigg[\frac{1}{N}\sum_{i=1}^N h(V^i_0)\bigg]+t\int_{0}^{t}\mathbb{V}\bigg(\frac{1}{N}\sum_{i=1}^N C^i(r)\bigg) dr \\&~~~~+\e\bigg[\sup_{s\le t}|M_N(s)|^2\bigg]+\e\bigg[\sup_{s\le t}|L_N(s)|^2\bigg]\Bigg\}.
\end{aligned}
\end{equation*} 
Thus, we get
\begin{equation*}
\begin{aligned}
\e\bigg[\sup_{s\le t}|R_N(s)|^2\bigg]&
\leq \frac{4}{N} \mathbb{V}[h(V_0)]+\frac{4t}{N}\int_{0}^{t}\mathbb{V}(C(r)) dr +4\e\bigg[\sup_{s\le t}|M_N(s)|^2\bigg]+4\e\bigg[\sup_{s\le t}|L_N(s)|^2\bigg]\\&
=\frac{4}{N} \mathbb{V}[h(V_0)]+\frac{4t}{N}\int_{0}^{t}\mathbb{V}(h'\big(V_{r^-}\big)\psi_r +\bar{\m{L}}_{{X}_{r^-}}h\big(V_{r^-}\big)) dr \\&~~~~+4\e\bigg[\sup_{s\le t}|M_N(s)|^2\bigg]+4\e\bigg[\sup_{s\le t}|L_N(s)|^2\bigg].
\end{aligned}
\end{equation*} 
Since $M_N$ is a martingale with
$$\langle M_N\rangle_t=\frac{1}{N^2}\sum_{i=1}^N\int_{0}^t\big(h'(V^i_{r^-})\sigma(\bar{X}^i_{r^-})\big)^2dr,$$
Doob's inequality leads to
\begin{equation*}
\begin{aligned}
\e\bigg[\sup_{s\le t}|M_N(s)|^2\bigg]&\leq 4\e\big[|M_N(t)|^2\big]\\&=\frac{4}{N^2}\sum_{i=1}^N\int_{0}^t\e\bigg[\big(h'(V^i_{r^-})\sigma(\bar{X}^i_{r^-})\big)^2\bigg]dr\\&=\frac{4}{N}\int_{0}^t\e\bigg[\big(h'(V_{r^-})\sigma({X}_{r^-})\big)^2\bigg]dr.
\end{aligned}
\end{equation*}
Then, using Doob inequality for the last martingale $L_N$, we obtain
\begin{equation*}
\begin{aligned}
\e\bigg[\sup_{s\le t}|L_N(s)|^2\bigg]&\leq 4\e\big[|L_N(t)|^2\big]\\&=\frac{4}{N^2}\sum_{i=1}^N\e\bigg[\bigg(\int_{0}^t\int_E\Big(h\big(V^i_{r^-}+F(\bar{X}^i_{r^-},z)\big)-h\big(V^i_{r^-}\big)\Big)\tilde{N}^i(dr,dz)\bigg)^2\bigg]\\&~~~~+\frac{8}{N^2}\sum_{1\leq i<j\leq N}\e\bigg[\int_{0}^t\int_E\Big(h\big(V^i_{r^-}+F(\bar{X}^i_{r^-},z)\big)-h\big(V^i_{r^-}\big)\Big)\tilde{N}^i(dr,dz)\\&~~~~\int_{0}^t\int_E\Big(h\big(V^j_{r^-}+F(\bar{X}^j_{r^-},z)\big)-h\big(V^j_{r^-}\big)\Big)\tilde{N}^j(dr,dz)\bigg]\\&=\frac{4}{N^2}\sum_{i=1}^N\int_{0}^t\int_E\e\bigg[\Big(h\big(V^i_{r^-}+F(\bar{X}^i_{r^-},z)\big)-h\big(V^i_{r^-}\big)\Big)^2\bigg]\lambda(dz)dr+0\\&=\frac{4}{N}\int_{0}^t\int_E\e\bigg[\Big(h\big(V^i_{r^-}+F(\bar{X}^i_{r^-},z)\big)-h\big(V^i_{r^-}\big)\Big)^2\bigg]\lambda(dz)dr.
\end{aligned}
\end{equation*}
Finally, using the fact that $h$ has bounded derivatives, $b$, $\sigma$ and $F$ are Lipschitz, we get
$$\e\bigg[\sup_{s\le t}|R_N(s)|^2\bigg]\leq C(1+t^2)\bigg(1+\e\bigg[\sup_{s\le t}|X_s|^2\bigg]\bigg)N^{-1}.$$
This gives the result coming back to $\eqref{inq2_proof_part}$.
\end{proof}
\end{proof}

\section{A numerical scheme for MRSDE.}\label{sec:NSMRSDE}

We are interested in the numerical approximation of the SDE \eqref{eq:main} on $[0,T]$. Here are the main steps of the scheme. Let $0=T_0<T_1<\cdots<T_n=T$ be a subdivision of $[0,T]$. Given this subdivision, we denote by "$\_$" the mapping $s\mapsto\underline{s}=T_k$ if $s\in[T_k,T_{k+1})$, $k\in \{0,\cdots ,n-1\}$. For simplicity, we consider only the case of regular subdivisions: for a given integer $n$, $T_k=kT/n$, $k=0,\ldots,n$.\\
Let us recall that we proved in the previous section that the particles system, for $1\leq i\leq N$,
$$
X^i_t=\bar{X}^i_0+\int_0^t b(X^i_{s^-}) ds + \int_0^t \sigma(X^i_{s^-}) dB^i_s  + \int_0^t\int_E F(X^i_{s^-},z) \tilde{N}^i(ds,dz) + \sup_{s\le t} {G}_0(\mu^N_{s}),$$
where we have set 
\begin{align*}
	\mu^N_t & =\frac{1}{N}\sum^N_{i=1}\delta_{U^i_t}, \\
	U^i_t & =\bar{X}^i_0+\int_0^t b(X^i_{s^-}) ds + \int_0^t \sigma(X^i_{s^-}) dB^i_s  + \int_0^t\int_E F(X^i_{s^-},z) \tilde{N}^i(ds,dz),\quad 1 \leq i\leq N, 
\end{align*}
$B^i$ being independent Brownian motions, $N^i$ being independent Poisson processes and $\bar{X}^i_0$ being independent copies of $X_0$, converges toward the solution to \eqref{eq:main}. Thus, the numerical approximation is obtained by an Euler scheme applied to this particles system. We introduce the following discrete version of the particles system: for $1\leq i\leq N$,
$$
\tilde{X}^i_t=\bar{X}^i_0+\int_0^t b(\tilde{X}^i_{\underline{s}^-}) ds + \int_0^t \sigma(\tilde{X}^i_{\underline{s}^-}) dB^i_s  + \int_0^t\int_E F(\tilde{X}^i_{\underline{s}^-},z) \tilde{N}^i(ds,dz) + \sup_{s\le t} {G}_0(\tilde{\mu}^N_{\underline{s}}),
$$
with the notation 
\begin{align*}
	\tilde{\mu}^N_{\underline{t}} & =\frac{1}{N}\sum^N_{i=1}\delta_{\tilde{U}^i_t}, \\
	\tilde{U}^i_t & =\bar{X}^i_0+\int_0^t b(\tilde{X}^i_{\underline{s}^-}) ds + \int_0^t \sigma(\tilde{X}^i_{\underline{s}^-}) dB^i_s  + \int_0^t\int_E F(\tilde{X}^i_{\underline{s}^-},z) \tilde{N}^i(ds,dz),\quad 1 \leq i\leq N.
\end{align*}

\subsection{Scheme.}
Using the notations given above, the result on the interacting system of mean
reflected particles of the MR-SDE of Section 3 and Remark $\ref{rem}$, we deduce the following algorithm for the numerical approximation of the MR-SDE:

\begin{remark}
We emphasize that, at each step $k$ of the algorithm, we approximate the increment of the reflection process $K$ by the increment of the approximation:
\begin{equation}
\label{increment} 
\Delta_k \hat{K}^N:=\sup_{l\leq k}G_0(\tilde{\mu}^N_{T_l})-\sup_{l\leq k-1}G_0(\tilde{\mu}^N_{T_l}).
\end{equation}
\end{remark}

First, we consider the special case when the SDE is defined by
\begin{equation*}
	\begin{cases}
	\begin{split}
	& X_t  =X_0+\int_0^t b(X_{s^-}) ds + \int_0^t \sigma(X_{s^-}) dB_s  + \int_0^t F(X_{s^-}) d\tilde{N}_s + K_t,\quad t\geq 0, \\
	& \e[h(X_t)] \geq 0, \quad \int_0^t \e[h(X_s)] \, dK_s = 0, \quad t\geq 0.
	\end{split}
	\end{cases}
\end{equation*}
where $N$ is a Poisson process with intensity $\lambda$, and $\tilde{N}_t=N_t-\lambda t.$
\\
As suggested in Remark \ref{rem}, the increment \eqref{increment} can be approached by:\\
\\
$\widehat{\Delta_k K}^N:=$
\begin{equation*}
\begin {aligned}
\inf\Bigg\{x\geq 0:\frac{1}{N}\sum_{i=1}^N h\Bigg(&x+\Big(\tilde{X}_{T_{k-1}}^{\tilde{\mu}^N}\Big)^i+\frac{T}{n} \Bigg(b\bigg(\Big(\tilde{X}_{T_{k-1}}^{\tilde{\mu}^N}\Big)^i\bigg)-\lambda F\bigg(\Big(\tilde{X}_{T_{k-1}}^{\tilde{\mu}^N}\Big)^j\bigg)+\frac{\sqrt{T}}{\sqrt{n}}\sigma\bigg(\Big(\tilde{X}_{T_{k-1}}^{\tilde{\mu}^N}\Big)^i\bigg)G^i\\&+F\bigg(\Big(\tilde{X}_{T_{k-1}}^{\tilde{\mu}^N}\Big)^i\bigg)H^i\Bigg)\geq0\Bigg\},
\end {aligned}
\end{equation*}
where $G^j\sim \mathcal{N}(0,1)$ and $H^j\sim \mathcal{P}(\lambda(T/n))$ and are i.i.d.

\begin{algorithm}
\caption{Particle approximation}\label{alg_part}
\begin{algorithmic}[1]
\sFor {$1\leq j\leq N$}{
\State$\bigg(\Big(\tilde{X}_{0}^{\tilde{\mu}^N}\Big)^j,\Big(\tilde{U}_{0}^{\tilde{\mu}^N}\Big)^j,\hat{\mu}^N_0\bigg)=(x,x,\delta_x)$}
\sFor {$1\leq k\leq n$}{
\sFor {$1\leq j\leq N$}{\State $G^j\sim \mathcal{N}(0,1)$
                        \State $H^j\sim \mathcal{P}(\lambda(T/n))$
                        \State $\Big(\tilde{U}_{T_{k}}^{\tilde{\mu}^N}\Big)^j=\Big(\tilde{U}_{T_{k-1}}^{\tilde{\mu}^N}\Big)^j+(T/n)\Bigg(b\bigg(\Big(\tilde{X}_{T_{k-1}}^{\tilde{\mu}^N}\Big)^j\bigg)-\lambda F\bigg(\Big(\tilde{X}_{T_{k-1}}^{\tilde{\mu}^N}\Big)^j\bigg)\Bigg)$
                        \State ~~~~~~~~~~~~~~~$+\sqrt{(T/n)}\sigma\bigg(\Big(\tilde{X}_{T_{k-1}}^{\tilde{\mu}^N}\Big)^j\bigg)G^j+F\bigg(\Big(\tilde{X}_{T_{k-1}}^{\tilde{\mu}^N}\Big)^j\bigg)H^j$}
 \State$\tilde{\mu}^N_{T_k}=N^{-1}\sum_{j=1}^{N}\delta_{(\tilde{U}_{T_k}^{\tilde{\mu}^N})^j}$                       
 \State$\Delta_k \hat{K}^N=\sup_{l\leq k}G_0(\tilde{\mu}^N_{T_l})-\sup_{l\leq k-1}G_0(\tilde{\mu}^N_{T_l})$                       
\sFor {$1\leq j\leq N$}{\State                       $\Big(\tilde{X}_{T_{k}}^{\tilde{\mu}^N}\Big)^j=\Big(\tilde{X}_{T_{k-1}}^{\tilde{\mu}^N}\Big)^j+(T/n)\Bigg(b\bigg(\Big(\tilde{X}_{T_{k-1}}^{\tilde{\mu}^N}\Big)^j\bigg)-\lambda F\bigg(\Big(\tilde{X}_{T_{k-1}}^{\tilde{\mu}^N}\Big)^j\bigg)\Bigg)$
                        \State ~~~~~~~~~~~~~~~$+\sqrt{(T/n)}\sigma\bigg(\Big(\tilde{X}_{T_{k-1}}^{\tilde{\mu}^N}\Big)^j\bigg)G^j+F\bigg(\Big(\tilde{X}_{T_{k-1}}^{\tilde{\mu}^N}\Big)^j\bigg)H^j+\Delta_k\hat{K}^N$}
}

\end{algorithmic}
\end{algorithm}
Indeed, using the same kind of arguments as in the sketch of the proof of Theorem $\ref{thrm_exacte}$, one can show that the increments of the approximated reflection process are equals to the approximation of the increments:
\begin{equation*}
\forall k\in\{1,\cdots n\}:~~ \widehat{\Delta_k K}^N=\Delta_k\hat{K}^N.
\end{equation*}

\newpage 

Returning to the general case \eqref{eq:main}, we can see in \cite{YS12}, $N=\{N(t):=N(E\times[0,t])\}$ is a stochastic process with intensity $\lambda$ that counts the number of jumps until some given time. The Poisson random measure $N(dz,dt)$ generates a sequence of pairs $\{(\iota_i,\xi_i),i\in \{1,2,\cdots,N(T)\}\}$ for a given finite positive constant $T$ if $\lambda<\infty$. Here $\{\iota_i,i\in \{1,2,\cdots,N(T)\}\}$ is a sequence of increasing nonnegative random variables representing the jump times of a standard Poisson process with intensity $\lambda$, and $\{\xi_i,i\in \{1,2,\cdots,N(T)\}\}$ is a sequence of independent identically distributed random variables, where $\xi_i$ is distributed according to $f(z)$, where $\lambda(dz)dt=\lambda f(z)dzdt$.
Then, the numerical approximation can equivalently be the following form
\begin{equation*}
\begin{aligned}
\bar{X}^j_{T_k}&=\bar{X}^j_{T_{k-1}}+\frac{T}{n}\bigg(b(\bar{X}^j_{T_{k-1}})-\int_E\lambda F(\bar{X}^j_{T_{k-1}},z)f(z)dz\bigg)+\sqrt{\frac{T}{n}}\sigma(\bar{X}^j_{T_{k-1}})G^j\\&~~~~+\sum_{i=H^j_{T_{k-1}}+1}^{H^j_{T_{k}}}F(\bar{X}^j_{T_{k-1}},\xi_i)+\Delta_k \hat{K}^N,
\end{aligned}
\end{equation*}
$\Delta_k\hat{K}^N=\widehat{\Delta_k K}^N=$
\begin{equation*}
\begin {aligned}
\inf\Bigg\{x\geq 0:\frac{1}{N}\sum_{i=1}^N h\Bigg(&x+\bar{X}^j_{T_{k-1}}+\frac{T}{n}\bigg(b(\bar{X}^j_{T_{k-1}})-\int_E\lambda F(\bar{X}^j_{T_{k-1}},z)f(z)dz\bigg)+\sqrt{\frac{T}{n}} \sigma(\bar{X}^j_{T_{k-1}})G^j\\&~~~~+\sum_{i=H^j_{T_{k-1}}+1}^{H^j_{T_{k}}}F(\bar{X}^j_{T_{k-1}},\xi_i)\Bigg)\geq0\Bigg\},
\end {aligned}
\end{equation*}
where $G^j\sim \mathcal{N}(0,1)$ and $H^j\sim \mathcal{P}(\lambda(T/n))$ and are i.i.d.

\subsection{Scheme error.}

\begin{proposition}
\label{cv_num}
\begin{enumerate}
\item[(i)] Let $T>0$, $N$ and $n$ be two non-negative integers and let Assumptions $\ref{lip}$, $\ref{bilip}$ and $\ref{int}$ hold. There exists a constant $C$, depending on $T$, $b$, $\sigma$, $F$, $h$ and $X_0$ but independent of $N$, such that: for all $i=1,\ldots,N$
$$\e\bigg[\sup_{s\le t}\big|X^i_s-\tilde{X}^i_s\big|^2\bigg]\leq C\bigg(n^{-1}+N^{-1/2}\bigg).$$
\item[(ii)] Moreover, if Assumption \ref{reg} is in force, there exists a constant $C$, depending on $T$, $b$, $\sigma$, $F$, $h$ and $X_0$ but independent of $N$, such that: for all $i=1,\ldots,N$
$$\e\bigg[\sup_{s\le t}\big|X^i_s-\tilde{X}^i_s\big|^2\bigg]\leq C\bigg(n^{-1}+N^{-1}\bigg).$$
\end{enumerate}
\end{proposition}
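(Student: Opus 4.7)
The plan is to bound $\e[\sup_{s\le t}|X^i_s-\tilde X^i_s|^2]$ by a Gronwall-type argument, treating the Euler-type errors on the drift, diffusion and jump pieces in a standard way and handling the reflection term through a triangle decomposition that isolates a time-discretization piece from a particle-convergence piece.

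\textbf{Step 1 (Decomposition and Lipschitz terms).} Subtracting the equations defining $X^i$ and $\tilde X^i$, I would write
\[
X^i_t - \tilde X^i_t = \int_0^t \bigl(b(X^i_{s^-})-b(\tilde X^i_{\underline s^-})\bigr)ds + \int_0^t \bigl(\sigma(X^i_{s^-})-\sigma(\tilde X^i_{\underline s^-})\bigr) dB^i_s + \int_0^t\int_E \bigl(F(X^i_{s^-},z)-F(\tilde X^i_{\underline s^-},z)\bigr)\tilde N^i(ds,dz) + R_t,
\]
where $R_t:=\sup_{s\le t}G_0(\mu^N_s)-\sup_{s\le t}G_0(\tilde\mu^N_{\underline s})$. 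After squaring, taking the supremum in time and using Doob/BDG as in the proof of Theorem~\ref{thrm_exacte}, Assumption~\ref{lip} gives a control of the form $C(1+T)\int_0^t \e[|X^i_s-\tilde X^i_{\underline s}|^2]ds + \e[\sup_{s\le t}|R_s|^2]$. I would then split $|X^i_s-\tilde X^i_{\underline s}|^2\le 2|X^i_s-X^i_{\underline s}|^2 + 2|X^i_{\underline s}-\tilde X^i_{\underline s}|^2$ and bound the first piece by $C n^{-1}$ using Remark~\ref{rem_X} (which holds for the particle system by the same proof, since $X^i$ enjoys the same $L^p$ regularity as $X$ by Proposition~\ref{propriete_1} applied particle-wise).

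\textbf{Step 2 (Reflection term through the true law).} Using $|\sup_s G_0(\mu^N_s)-\sup_s G_0(\tilde\mu^N_{\underline s})|\le \sup_s|G_0(\mu^N_s)-G_0(\tilde\mu^N_{\underline s})|$, I would insert the true marginal law $\mu_s$ twice:
\[
|G_0(\mu^N_s)-G_0(\tilde\mu^N_{\underline s})| \le |G_0(\mu^N_s)-G_0(\mu_s)| + |G_0(\mu_s)-G_0(\mu_{\underline s})| + |G_0(\mu_{\underline s})-G_0(\tilde\mu^N_{\underline s})|.
\]
The first term, taken with supremum in $s$ and squared in expectation, is exactly what Theorem~\ref{cv_part}(i) bounds by $C N^{-1/2}$ (and by $C N^{-1}$ under \ref{reg}, giving part (ii)); this is where the $N^{-1/2}$ (resp.\ $N^{-1}$) enters. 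The middle term is $O(n^{-1/2})$ pointwise in $L^2$ by the $1/2$-Hölder continuity of $t\mapsto G_0(\mu_t)$ inherited from Proposition~\ref{propriete_2}(i). For the third term, Lemma~\ref{wiss} and the natural coupling give $|G_0(\mu_{\underline s})-G_0(\tilde\mu^N_{\underline s})|\le \tfrac{M}{m}W_1(\mu_{\underline s},\tilde\mu^N_{\underline s})\le \tfrac{M}{m}\bigl(W_1(\mu_{\underline s},\bar\mu^N_{\underline s})+\tfrac{1}{N}\sum_i|\bar U^i_{\underline s}-\tilde U^i_{\underline s}|\bigr)$; the first contribution is again an empirical-law error controlled by Theorem~\ref{cv_part}, and the second is an Euler-type error for the decoupled particles that, after exchangeability and Cauchy--Schwarz, reduces to $\e[\sup_{s\le t}|\bar U^1_{\underline s}-\tilde U^1_{\underline s}|^2]$ and ultimately feeds back into the Gronwall inequality.

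\textbf{Step 3 (Gronwall and conclusion).} Combining Steps 1--2 yields
\[
\e\bigl[\sup_{s\le t}|X^i_s-\tilde X^i_s|^2\bigr] \le C\int_0^t \e\bigl[\sup_{r\le s}|X^i_r-\tilde X^i_r|^2\bigr] ds + C\bigl(n^{-1}+N^{-1/2}\bigr),
\]
and Gronwall's lemma closes the estimate for (i). For (ii), the same argument with the sharper Theorem~\ref{cv_part}(ii) replacing Theorem~\ref{cv_part}(i) at every appearance of a particle-to-true-law comparison yields the $n^{-1}+N^{-1}$ rate.

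\textbf{Main obstacle.} The delicate point is handling the uniform-in-time supremum inside the reflection term, in particular the piece $\e[\sup_{s\le t}|G_0(\mu^N_s)-G_0(\mu_s)|^2]$: one cannot afford to lose any power of $N$ when passing to the supremum. Fortunately, this uniform-in-time control is precisely what Theorem~\ref{cv_part} has already established, so the present proof can invoke it as a black box. The remaining bookkeeping — verifying that $X^i$ and $U^i$ satisfy the same regularity estimates as $X$ and $U$, and combining the various Euler/Hölder/exchangeability bounds — is routine.
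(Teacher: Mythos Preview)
Your approach is correct and arrives at the same conclusion, but you organize the two key splittings differently from the paper, and one of your shortcuts needs a bit more justification.

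\textbf{Where you diverge from the paper.} In Step~1 you split $|X^i_s-\tilde X^i_{\underline s}|^2$ through $X^i_{\underline s}$; the paper instead splits through $\tilde X^i_s$, i.e.\ $|X^i_s-\tilde X^i_{\underline s}|^2\le 2|X^i_s-\tilde X^i_s|^2+2|\tilde X^i_s-\tilde X^i_{\underline s}|^2$. The paper's choice is neater because $\tilde X^i_s-\tilde X^i_{\underline s}=\tilde U^i_s-\tilde U^i_{\underline s}$ exactly (the discrete reflection $\sup_{r\le\cdot}G_0(\tilde\mu^N_{\underline r})$ is constant on $[\underline s,s)$), so the $O(n^{-1})$ bound follows from a direct Euler estimate with no reflection term to handle. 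In Step~2 you triangulate the reflection term through the true law $\mu_s$ and then $\mu_{\underline s}$, whereas the paper triangulates through the \emph{particle} marginal laws $\mu^i_s$ (law of $U^i_s$) and $\tilde\mu^i_{\underline s}$ (law of $\tilde U^i_s$). With the paper's intermediates the middle piece is $|G_0(\mu^i_s)-G_0(\tilde\mu^i_{\underline s})|\le \tfrac{M}{m}\e[|U^i_s-\tilde U^i_{\underline s}|]$, which after splitting $U^i_s-\tilde U^i_{\underline s}=(U^i_s-\tilde U^i_s)+(\tilde U^i_s-\tilde U^i_{\underline s})$ feeds straight into the Gronwall term and an $O(n^{-1})$ term; the two outer pieces $|G_0(\mu^N_s)-G_0(\mu^i_s)|$ and $|G_0(\tilde\mu^i_{\underline s})-G_0(\tilde\mu^N_{\underline s})|$ are handled by the same empirical--to--law argument as in Theorem~\ref{cv_part}. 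Your route via $\mu_s$ works too and has the advantage of invoking Theorem~\ref{cv_part} as a genuine black box, at the price of an extra layer of decomposition through $\bar\mu^N_{\underline s}$.

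\textbf{A loose end.} Your appeal to ``Remark~\ref{rem_X} applied particle-wise'' for $\e[|X^i_s-X^i_{\underline s}|^2]\le Cn^{-1}$ is not quite immediate: Remark~\ref{rem_X} (via Proposition~\ref{propriete_2}) uses that $K$ is \emph{deterministic} and $1/2$-H\"older, whereas the particle ``reflection'' $K^N_t=\sup_{r\le t}G_0(\mu^N_r)$ is random. The bound still holds, but you need the short argument $0\le K^N_s-K^N_{\underline s}\le \sup_{\underline s\le r\le s}|G_0(\mu^N_r)-G_0(\mu^N_{\underline s})|\le \tfrac{M}{m}\tfrac1N\sum_j\sup_{\underline s\le r\le s}|U^j_r-U^j_{\underline s}|$ together with exchangeability and the usual Doob/BDG estimate on $\e[\sup_{\underline s\le r\le s}|U^1_r-U^1_{\underline s}|^2]\le C|s-\underline s|$. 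The paper's choice of splitting via $\tilde X^i_s$ sidesteps this entirely.
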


\begin{proof}
Let us fix $i\in {1,\ldots,N}$ and $T>0$. We have, for $t\leq T$,
\begin{equation*}
\begin{aligned}
\bigg|X^i_s-\tilde{X}^i_s\bigg|& \leq \bigg|\int_0^s b(X^i_{r^-})- b(\tilde{X}^i_{\underline{r}^-})dr \bigg| + \bigg|\int_0^s \bigg(\sigma(X^i_{r^-})-\sigma(\tilde{X}^i_{\underline{r}^-})\bigg) dB^i_r\bigg|  \\&~~
~~+ \bigg|\int_0^s\int_E\bigg(F(X^i_{r^-},z) -F(\tilde{X}^i_{\underline{r}^-},z)\bigg) \tilde{N}^i(dr,dz)\bigg| + \sup_{r\le s} \big|{G}_0(\mu_r^N)-{G}_0(\tilde{\mu}^N_{\underline{r}})\big|.
\end{aligned}
\end{equation*}
Hence, using Assumption \ref{lip}, Cauchy-Schwartz, Doob and BDG inequalities  we get:
\begin{equation}
\label{X1}
\begin{aligned}
\e\bigg[\sup_{s\le t}\big|X^i_s-\tilde{X}^i_s\big|^2\bigg] & \leq 4\e\bigg[\sup_{s\le t}\Bigg\{\bigg|\int_0^s\bigg(b(X^i_{r^-})- b(\tilde{X}^i_{\underline{r}^-})\bigg) dr\bigg|^2 + \bigg|\int_0^s \bigg(\sigma(X^i_{r^-})-\sigma(\tilde{X}^i_{\underline{r}^-})\bigg) dB^i_r\bigg|^2  \\&~~
~~+ \bigg|\int_0^s\int_E\bigg(F(X^i_{r^-},z) -F(\tilde{X}^i_{\underline{r}^-},z)\bigg) \tilde{N}^i(dr,dz)\bigg|^2 + \sup_{r\le s} \big|{G}_0(\mu_r^N)-{G}_0(\tilde{\mu}^N_{\underline{r}})\big|^2\Bigg\}\bigg] \\&
\leq C\Bigg\{\e \bigg[t\int_0^t\bigg|b(X^i_{s^-})- b(\tilde{X}^i_{\underline{s}^-}) \bigg|^2ds\bigg] + \e\bigg[\int_0^t \bigg|\sigma(X^i_{s^-})-\sigma(\tilde{X}^i_{\underline{s}^-})\bigg|^2 ds\bigg]\\&~~
~~+\e \bigg[\int_0^t\int_E\bigg|F(X^i_{s^-},z) -F(\tilde{X}^i_{\underline{s}^-},z)\bigg|^2 \lambda(dz)ds\bigg] + \e\bigg[\sup_{s\le t} \big|{G}_0(\mu_s^N)-{G}_0(\tilde{\mu}^N_{\underline{s}})\big|^2\bigg]\Bigg\} \\&
\leq C\Bigg\{TC_1\int_0^t \e\bigg[\big|X^i_s-\tilde{X}^i_{\underline{s}}\big|^2\bigg] ds+ C_1\int_0^t \e\bigg[\big|X^i_s-\tilde{X}^i_{\underline{s}}\big|^2\bigg] ds\\&~~
~~+C_1\int_0^t \e\bigg[\big|X^i_s-\tilde{X}^i_{\underline{s}}\big|^2\bigg] ds+ \e\bigg[\sup_{s\le t} \big|{G}_0(\mu_s^N)-{G}_0(\tilde{\mu}^N_{\underline{s}})\big|^2\bigg]\Bigg\}\\&
\leq C\int_0^t \e\bigg[\big|X^i_s-\tilde{X}^i_{\underline{s}}\big|^2\bigg] ds +4\e\bigg[\sup_{s\le t} \big|{G}_0(\mu_s^N)-{G}_0(\tilde{\mu}^N_{\underline{s}})\big|^2\bigg].
\end{aligned}
\end{equation}
Denoting by  $(\mu^i_{t})_{0\le t\le T}$ the family of marginal laws of $(U^i_{t})_{0\le t\le T}$ and $(\tilde{\mu}^i_{\underline{t}})_{0\le t\le T}$ the family of marginal laws of $(\tilde{U}^i_{t})_{0\le t\le T}$, we have
\begin{equation*}
\begin{aligned}
\e\bigg[\sup_{s\le t} \big|{G}_0(\mu_s^N)-{G}_0(\tilde{\mu}^N_{\underline{s}})\big|^2\bigg]&
\leq3\Bigg\{\e\bigg[\sup_{s\le t} \big|{G}_0(\mu_s^N)-{G}_0(\mu_s^i)\big|^2\bigg]
+\sup_{s\le t} \big|{G}_0(\mu_s^i)-{G}_0(\tilde{\mu}^i_{\underline{s}})\big|^2\\&~~
~~+\e\bigg[\sup_{s\le t} \big|{G}_0(\tilde{\mu}^i_{\underline{s}})-{G}_0(\tilde{\mu}^N_{\underline{s}})\big|^2\bigg]\Bigg\},
\end{aligned}
\end{equation*}
and from Lemma $\ref{wiss}$,
\begin{equation*}
\begin{aligned}
~~~~~~~~~~~~~~~~~~~~~~~~~~~~~~~&
\leq3\Bigg\{{1\over{m^2}}\e\bigg[\sup_{s\le t}\left|\int h(\bar{G}_0(\mu^i_s)+\cdot) (d\mu_s^N-d\mu_s^i)\right|^2\bigg]
+\bigg(\frac{M}{m}\bigg)^2\sup_{s\le t} W_1^2(\mu_s^i,\tilde{\mu}^i_{\underline{s}})\\&~~
~~+{1\over{m^2}}\e\bigg[\sup_{s\le t}\left|\int h(\bar{G}_0(\tilde{\mu}^i_{\underline{s}})+\cdot) (d\tilde{\mu}^N_{\underline{s}}-d\tilde{\mu}^i_{\underline{s}})\right|^2\bigg]\Bigg\}\\&
\leq C\Bigg\{\e\bigg[\sup_{s\le t}\left|\int h(\bar{G}_0(\mu^i_s)+\cdot) (d\mu_s^N-d\mu_s^i)\right|^2\bigg]
+\sup_{s\le t} \e\bigg[\bigg|U_s^i-\tilde{U}^i_{\underline{s}}\bigg|^2\bigg]\\&~~
~~+\e\bigg[\sup_{s\le t}\left|\int h(\bar{G}_0(\tilde{\mu}^i_{\underline{s}})+\cdot) (d\tilde{\mu}^N_{\underline{s}}-d\tilde{\mu}^i_{\underline{s}})\right|^2\bigg]\Bigg\}.
\end{aligned}
\end{equation*}
\begin{proof}[Proof of (i)]\renewcommand{\qedsymbol}{}
Following the Proof of (i) in Theorem \ref{cv_part}, we obtain
$$\e\bigg[\sup_{s\le t}\left|\int h(\bar{G}_0(\mu^i_s)+\cdot) (d\mu_s^N-d\mu_s^i)\right|^2\bigg]\leq C\e\bigg[\sup_{s\le t} W_1^2(\mu_s^N,\mu_s^i)\bigg]\leq CN^{-1/2},$$
$$\e\bigg[\sup_{s\le t}\left|\int h(\bar{G}_0(\tilde{\mu}^i_{\underline{s}})+\cdot) (d\tilde{\mu}^N_{\underline{s}}-d\tilde{\mu}^i_{\underline{s}})\right|^2\bigg]\leq C\e\bigg[\sup_{s\le t} W_1^2(\tilde{\mu}^i_{\underline{s}},\tilde{\mu}^N_{\underline{s}})\bigg]\leq CN^{-1/2}.$$
From which, we can derive the inequality
\begin{equation*}
\begin{aligned}
\e\bigg[\sup_{s\le t} \big|{G}_0(\mu_s^N)-{G}_0(\tilde{\mu}^N_{\underline{s}})\big|^2\bigg]&
\leq C_1\sup_{s\le t} \e\bigg[\bigg|U_s^i-\tilde{U}^i_{\underline{s}}\bigg|^2\bigg]+C_2N^{-1/2}\\&
\leq C_1\bigg\{\sup_{s\le t} \e\bigg[\bigg|U_s^i-\tilde{U}^i_{s}\bigg|^2\bigg]+\sup_{s\le t} \e\bigg[\bigg|\tilde{U}^i_{s}-\tilde{U}^i_{\underline{s}}\bigg|^2\bigg]\bigg\}
+C_2N^{-1/2}.
\end{aligned}
\end{equation*}
For the first term of the right hand side, we can observe that,
\begin{equation*}
\begin{aligned}
\sup_{s\le t}\e\bigg[\bigg|U_s^i-\tilde{U}^i_{s}\bigg|^2\bigg]\bigg]&
\leq \e\bigg[\sup_{s\le t}\bigg|U_s^i-\tilde{U}^i_{s}\bigg|^2\bigg]\bigg]\\&
\leq 3\e\bigg[\sup_{s\le t}\Bigg\{\bigg|\int_0^s\bigg(b(X^i_{r^-})- b(\tilde{X}^i_{\underline{r}^-})\bigg) dr\bigg|^2 + \bigg|\int_0^s \bigg(\sigma(X^i_{r^-})-\sigma(\tilde{X}^i_{\underline{r}^-})\bigg) dB^i_r\bigg|^2  \\&~~+ \bigg|\int_0^s\int_E\bigg(F(X^i_{r^-},z) -F(\tilde{X}^i_{\underline{r}^-},z)\bigg) \tilde{N}^i(dr,dz)\bigg|^2\bigg\}\bigg] \\&
\leq C\Bigg\{\e \bigg[t\int_0^t\bigg|b(X^i_{s^-})- b(\tilde{X}^i_{\underline{s}^-}) \bigg|^2ds\bigg] + \e\bigg[\int_0^t \bigg|\sigma(X^i_{s^-})-\sigma(\tilde{X}^i_{\underline{s}^-})\bigg|^2 ds\bigg]\\&~~
~~+\e \bigg[\int_0^t\int_E\bigg|F(X^i_{s^-},z) -F(\tilde{X}^i_{\underline{s}^-},z)\bigg|^2 \lambda(dz)ds\bigg]\Bigg\} \\&
\leq C\Bigg\{TC_1\int_0^t \e\bigg[\big|X^i_s-\tilde{X}^i_{\underline{s}}\big|^2\bigg] dr+ 2C_1\int_0^t \e\bigg[\big|X^i_s-\tilde{X}^i_{\underline{s}}\big|^2\bigg] dr\Bigg\}\\&
\leq C\int_0^t \e\bigg[\big|X^i_s-\tilde{X}^i_{\underline{s}}\big|^2\bigg] ds.
\end{aligned}
\end{equation*}
Using Assumption \ref{lip}, the second term $\sup_{s\le t} \e\bigg[\bigg|\tilde{U}^i_{s}-\tilde{U}^i_{\underline{s}}\bigg|^2\bigg]$ becomes
\begin{equation*}
\begin{aligned}
\sup_{s\le t} \e\bigg[\bigg|\tilde{U}^i_{s}-\tilde{U}^i_{\underline{s}}\bigg|^2\bigg]&
\leq 3\sup_{s\le t}\Bigg\{\e\bigg[\bigg|\int_{\underline{s}}^s b(\tilde{X}^i_{\underline{r}^-}) dr\bigg|^2 
+ \bigg|\int_{\underline{s}}^s \sigma(\tilde{X}^i_{\underline{r}^-}) dB^i_r\bigg|^2 
+ \bigg|\int_{\underline{s}}^s \int_E F(\tilde{X}^i_{\underline{r}^-},z) \tilde{N}^i(dr,dz)\bigg|^2\bigg]\Bigg\}\\&
\leq 3\sup_{s\le t}\Bigg\{\e\bigg[\bigg| b(\tilde{X}^i_{\underline{s}}) \bigg|^2 \big|s-\underline{s}\big|^2
+ \bigg|\sigma(\tilde{X}^i_{\underline{s}}) \bigg|^2 \big|B^i_s-B^i_{\underline{s}}\big|^2 
+ \int_{\underline{s}}^s \int_E \bigg|F(\tilde{X}^i_{\underline{r}^-},z)\bigg|^2 \lambda(dz)dr\bigg]\Bigg\}\\&
\leq 3\sup_{s\le t}\Bigg\{\e\bigg[\bigg| b(\tilde{X}^i_{\underline{s}}) \bigg|^2 \big|s-\underline{s}\big|^2
+ \bigg|\sigma(\tilde{X}^i_{\underline{s}}) \bigg|^2 \big|B^i_s-B^i_{\underline{s}}\big|^2 
+  C\int_{\underline{s}}^s(1+|\tilde{X}^i_{\underline{r}^-}|^2) dr\bigg]\Bigg\}\\&
\leq 3\sup_{s\le t}\Bigg\{\bigg(\frac{T}{n}\bigg)^2\e\bigg[\big|\sup_{\underline{s}\leq r\leq s} b(\tilde{X}^i_{\underline{r}^-}) \big|^2\bigg]
+  \e\bigg[\big|B^i_s-B^i_{\underline{s}}\big|^2\bigg] \e\bigg[\big|\sigma(\tilde{X}^i_{\underline{s}}) \big|^2\bigg]
\\&~~~~+C\bigg(\frac{T}{n}\bigg)\e\bigg[\sup_{\underline{s}\leq r\leq s} (1+|\tilde{X}^i_r|^2)\bigg]\Bigg\}\\&
\leq C_1\bigg(\frac{T}{n}\bigg)^2\e\bigg[\sup_{s\le T} \big|b(\tilde{X}^i_s) \big|^2\bigg]
+ C_2 \sup_{s\le t}\e\bigg[\big|B^i_s-B^i_{\underline{s}}\big|^2\bigg] \e\bigg[\sup_{s\le T}\big|\sigma(\tilde{X}^i_s) \big|^2\bigg]\\&~~~~
+ C_3 \bigg(\frac{T}{n}\bigg)\e\bigg[\sup_{s\le T} (1+|\tilde{X}^i_s|^2)\bigg]\\&
\leq C_1\bigg(\frac{T}{n}\bigg)^2\bigg(1+\e\bigg[\sup_{s\le T} \big|\tilde{X}^i_s \big|^2\bigg]\bigg)
+ C_2 \sup_{s\le t}\e\bigg[\big|B^i_s-B^i_{\underline{s}}\big|^2\bigg] \bigg(1+\e\bigg[\sup_{s\le T} \big|\tilde{X}^i_s \big|^2\bigg]\bigg)\\&~~~~
+ C_3  \bigg(\frac{T}{n}\bigg)\bigg(1+\e\bigg[\sup_{s\le T} \big|\tilde{X}^i_s \big|^2\bigg]\bigg),
\end{aligned}
\end{equation*}
and from Proposition $\ref{propriete_1}$, we get
\begin{equation*}
\begin{aligned}
\sup_{s\le t} \e\bigg[\bigg|\tilde{U}^i_{s}-\tilde{U}^i_{\underline{s}}\bigg|^2\bigg]&
\leq C_1\bigg(\frac{T}{n}\bigg)+ C_2 \sup_{s\le t}\e\bigg[\big|B^i_s-B^i_{\underline{s}}\big|^2\bigg].
\end{aligned}
\end{equation*}
Then, by using BDG inequality, we obtain
\begin{equation*}
\begin{aligned}
\sup_{s\le t}\e\bigg[\big|B^i_s-B^i_{\underline{s}}\big|^2\bigg]
=\sup_{s\le t}\e\bigg[\bigg(\int_{\underline{s}}^s dB^i_u\bigg)^2\bigg]
\leq\sup_{s\le t}|s-\underline{s}|
\leq \frac{T}{n}.
\end{aligned}
\end{equation*}
Therefore, we conclude 
\begin{equation}
\label{U}
\begin{aligned}
\sup_{s\le t} \e\bigg[\bigg|\tilde{U}^i_{s}-\tilde{U}^i_{\underline{s}}\bigg|^2\bigg]&
\leq C_1n^{-1}+ C_2 n^{-1}\\&
\leq Cn^{-1},
\end{aligned}
\end{equation}
from which we derive the inequality
\begin{equation}
\label{G}
\begin{aligned}
\e\bigg[\sup_{s\le t} \big|{G}_0(\mu_s^N)-{G}_0(\tilde{\mu}^N_{\underline{s}})\big|^2\bigg]&
\leq C\Bigg\{n^{-1}+N^{-1/2}+\int_0^t \e\bigg[\big|X^i_s-\tilde{X}^i_{\underline{s}}\big|^2\bigg] ds\Bigg\},
\end{aligned}
\end{equation}
and taking into account $\eqref{X1}$ we get
\begin{equation}
\label{X2}
\begin{aligned}
\e\bigg[\sup_{s\le t}\big|X^i_s-\tilde{X}^i_s\big|^2\bigg] & \leq C\Bigg\{n^{-1}+N^{-1/2}+\int_0^t \e\bigg[\big|X^i_s-\tilde{X}^i_{\underline{s}}\big|^2\bigg] ds\Bigg\}.
\end{aligned}
\end{equation}
Since 
\begin{equation*}
\begin{aligned}
\e\bigg[\big|X^i_s-\tilde{X}^i_{\underline{s}}\big|^2\bigg]&
\leq 2\e\bigg[\big|X^i_s-\tilde{X}^i_{s}\big|^2\bigg]+2\e\bigg[\big|\tilde{X}^i_s-\tilde{X}^i_{\underline{s}}\big|^2\bigg]\\&
=2\e\bigg[\big|X^i_s-\tilde{X}^i_{s}\big|^2\bigg]+2\e\bigg[\big|\tilde{U}^i_s-\tilde{U}^i_{\underline{s}}\big|^2\bigg],
\end{aligned}
\end{equation*}
it follows from $\eqref{U}$ and $\eqref{X2}$ that
\begin{equation*}
\begin{aligned}
\e\bigg[\sup_{s\le t}\big|X^i_s-\tilde{X}^i_s\big|^2\bigg] & \leq C\Bigg\{n^{-1}+N^{-1/2}+\int_0^t \e\bigg[\big|X^i_s-\tilde{X}^i_s\big|^2\bigg] ds\Bigg\}.
\end{aligned}
\end{equation*}
and finally, we conclude the proof of (i) with Gronwall's Lemma.
\end{proof}
\begin{proof}[Proof of (ii)]\renewcommand{\qedsymbol}{}
Following the proof of (ii) in Theorem \ref{cv_part}, we obtain
$$\e\bigg[\sup_{s\le t}\left|\int h(\bar{G}_0(\mu^i_s)+\cdot) (d\mu_s^N-d\mu_s^i)\right|^2\bigg]\leq CN^{-1},$$
$$\e\bigg[\sup_{s\le t}\left|\int h(\bar{G}_0(\tilde{\mu}^i_{\underline{s}})+\cdot) (d\tilde{\mu}^N_{\underline{s}}-d\tilde{\mu}^i_{\underline{s}})\right|^2\bigg]\leq CN^{-1}.$$ 
According to the same strategy applied to proof of (i) in Theorem \ref{cv_num}, the result follows easily:
$$\e\bigg[\sup_{s\le t}\big|X^i_s-\tilde{X}^i_s\big|^2\bigg]\leq C\bigg(n^{-1}+N^{-1}\bigg).$$
\end{proof}
\end{proof}

\begin{theorem}
\label{cv}
Let $T > 0$, $N$ and $n$ be two non-negative integers. Let assumptions $\ref{lip}$, $\ref{bilip}$ and $\ref{int}$ hold.
\begin{enumerate}
\item[(i)] There exists a constant $C$, depending on $T$, $b$, $\sigma$, $F$, $h$ and $X_0$ but independent of $N$, such that: for all $i=1,\ldots,N$,
$$\e\bigg[\sup_{t\le T}\big|\bar{X}^i_t-\tilde{X}^i_t\big|^2\bigg]\leq C\bigg(n^{-1}+N^{-1/2}\bigg).$$
\item[(ii)] If in addition $\ref{reg}$ holds, there exists a positive constant $C$, depending on $T$, $b$, $\sigma$, $F$, $h$ and $X_0$ but independent of $N$, such that: for all $i=1,\ldots,N$,
$$\e\bigg[\sup_{t\le T}\big|\bar{X}^i_t-\tilde{X}^i_t\big|^2\bigg]\leq C\bigg(n^{-1}+N^{-1}\bigg).$$
\end{enumerate}
\end{theorem}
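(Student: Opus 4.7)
The statement is a direct synthesis of the two preceding results, so the plan is to set up a triangle inequality and combine them. For any $i \in \{1,\dots,N\}$, I would split
\begin{equation*}
\bigl|\bar{X}^i_t-\tilde{X}^i_t\bigr|\;\le\;\bigl|\bar{X}^i_t-X^i_t\bigr|+\bigl|X^i_t-\tilde{X}^i_t\bigr|,
\end{equation*}
where $X^i$ denotes the continuous-time interacting particle defined by \eqref{particle_system}. Passing to the supremum over $t\le T$ and using $(a+b)^2\le 2a^2+2b^2$ gives
\begin{equation*}
\e\Bigl[\sup_{t\le T}\bigl|\bar{X}^i_t-\tilde{X}^i_t\bigr|^2\Bigr]
\;\le\;2\,\e\Bigl[\sup_{t\le T}\bigl|\bar{X}^i_t-X^i_t\bigr|^2\Bigr]
+2\,\e\Bigl[\sup_{t\le T}\bigl|X^i_t-\tilde{X}^i_t\bigr|^2\Bigr].
\end{equation*}

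For (i), under \ref{lip}, \ref{bilip} and \ref{int}, Theorem \ref{cv_part}(i) controls the first term by a constant (depending on $T$, $b$, $\sigma$, $F$, $h$ and $X_0$) times $N^{-1/2}$, while Proposition \ref{cv_num}(i) controls the second term by a constant times $n^{-1}+N^{-1/2}$. Adding the two bounds absorbs the $N^{-1/2}$ terms into a single constant and produces the claimed $C(n^{-1}+N^{-1/2})$ estimate. For (ii), under the additional smoothness \ref{reg}, Theorem \ref{cv_part}(ii) upgrades the first bound to order $N^{-1}$ and Proposition \ref{cv_num}(ii) upgrades the second to $n^{-1}+N^{-1}$, yielding $C(n^{-1}+N^{-1})$ by the same summation.

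There is no real obstacle here: all the heavy lifting (the Gronwall/Lipschitz contraction arguments, the Wasserstein rates $W_1^2(\bar\mu^N_s,\mu_s)$ in the Lipschitz-$h$ case, and the finer martingale/Itô decomposition $R_N(s)$ yielding the $N^{-1}$ rate in the $\m{C}^2$ case) has already been done in the proofs of Theorem \ref{cv_part} and Proposition \ref{cv_num}. The only minor point to note is that the constants in the two invoked results depend on the same parameters $(T,b,\sigma,F,h,X_0)$ and are both independent of $N$ and $i$ (by exchangeability of the particles and the decoupled copies), so the final constant $C$ inherits these dependencies without introducing any new ones. Consequently, the proof reduces to the two-line triangle-inequality argument above.
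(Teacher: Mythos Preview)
Your proposal is correct and follows exactly the same approach as the paper: the paper's proof consists of the single triangle inequality $\big|\bar{X}^i_t-\tilde{X}^i_t\big|\leq \big|\bar{X}^i_t-{X}^i_t\big|+\big|{X}^i_t-\tilde{X}^i_t\big|$ followed by an appeal to Theorem~\ref{cv_part} and Proposition~\ref{cv_num}. You have simply written out the squaring step and the dependence of the constants more explicitly than the paper does.
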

\begin{proof}
The proof is straightforward writing
$$\big|\bar{X}^i_t-\tilde{X}^i_t\big|\leq \big|\bar{X}^i_t-{X}^i_t\big|+\big|{X}^i_t-\tilde{X}^i_t\big|,$$
and using Theorem \ref{cv_part} and Proposition \ref{cv_num}.
\end{proof}

\section{Numerical illustrations.}\label{sec:NI}
Throughout this section, we consider, on $[0,T]$ the following sort of processes:
\begin{equation}
	\label{eq-exacte}
	\begin{cases}
	\begin{split}
	& X_t  =X_0-\int_0^t (\beta_s+a_s X_{s^-}) ds + \int_0^t (\sigma_s+\gamma_s X_{s^-}) dB_s  + \int_0^t \int_E c(z)(\eta_s+\theta_s X_{s^-}) \tilde{N}(ds,dz) + K_t,\quad t\geq 0, \\
	& \e[h(X_t)] \geq 0, \quad \int_0^t \e[h(X_s)] \, dK_s = 0, \quad t\geq 0.
	\end{split}
	\end{cases}
\end{equation} 

where $(\beta_t)_{t\geq0}$, $(a_t)_{t\geq0}$, $(\sigma_t)_{t\geq0}$, $(\gamma_t)_{t\geq0}$, $(\eta_t)_{t\geq0}$ and $(\theta_t)_{t\geq0}$ are bounded adapted processes. This sort of processes allow us to make some explicit computations leading us to illustrate the algorithm. Our results are presented for different diffusions and functions $h$ that are summarized below.

\textbf{Linear constraint.}
We first consider cases where $h:\mathbb{R}\ni x \longmapsto x-p\in\mathbb{R}.$

\begin{itemize}
\item [Case (i)] Drifted Brownian motion and compensated Poisson process: $\beta_t=\beta>0$, $a_t=\gamma_t=\theta_t=0$, $\sigma_t=\sigma>0$, $\eta_t=\eta>0$, $X_0=x_0\geq p$, $c(z)=z$ and $$f(z)=\frac{1}{\sqrt{2\pi z}}\exp\bigg(-\frac{(\ln z)^2}{2}\bigg)\bf{1}_{\{0<z\}}.$$
We have $$K_t=(p+\beta t-x_0)^+,$$
and $$X_t=X_0-(\beta+\lambda\sqrt{e})t+\sigma B_t+\sum_{i=0}^{N_t}\eta \xi_i+K_t,$$ 
where $N_t\sim \mathcal{P}(\lambda t)$ and $\xi_i\sim lognormal(0,1).$

\item [Case (ii)] Black and Scholes process: $\beta_t=\sigma_t=\eta_t=0$, $a_t=a>0$, $\gamma_t=\gamma>0$, $\theta_t=\theta>0$, $c(z)=\delta_1(z)$. Then\\
\\ $K_t=ap(t-t^*)1_{t\geq t^*}$, where $t^*=\frac{1}{a}(\ln(x_0)-\ln(p))$,\\
and
$$X_t=Y_t+Y_t\int_0^tY_s^{-1}dK_s,$$
where $Y$ is the process defined by:
$$Y_t=X_0\exp\Big(-(a+\gamma^2/2+\lambda\theta)t+\gamma B_t\Big)(1+\theta)^{N_t}.$$
\end{itemize}

\textbf{Nonlinear constraint.}
Secondly, we illustrate the case of non-linear function $h$:
$$h:\mathbb{R}\ni x\longmapsto x+\alpha \sin(x)-p\in\mathbb{R},~~-1<\alpha<1,$$
and we illustrate this case with 
\begin{itemize}
\item [Case (iii)] Ornstein Uhlenbeck process: $\beta_t=\beta>0$, $a_t=a>0$, $\gamma_t=\theta_t=0$, $\sigma_t=\sigma>0$, $\eta_t=\eta>0$, $X_0=x_0$ with $x_0>|\alpha|+p$, $c(z)=\delta_1(z)$. We obtain
$$\mathrm{d}K_t=e^{-at}\mathrm{d}\sup_{s\leq t}(F^{-1}_s(0))^+,$$
where for all $t$ in $[0,T],$
\begin{equation*}
\begin{aligned}
F_t:\mathbb{R}\ni x\longmapsto &\Bigg\{e^{-at}\bigg(x_0-\beta\bigg(\frac{e^{at}-1}{a}\bigg)+x\bigg)
+\alpha\exp\bigg(-e^{-at}\frac{\sigma^2}{2a}\sinh(at)\bigg)
\\&\times\Bigg[\frac{1}{2}\bigg(\exp(\lambda t(e^{i\eta}-1))+\exp(\lambda t(e^{-i\eta}-1))\bigg)\sin\bigg(e^{-at}\bigg(x_0-(\beta+\lambda\eta)\bigg(\frac{e^{-at}-1}{a}\bigg)+x\bigg)\bigg)
\\&+\frac{1}{2i}\bigg(\exp(\lambda t(e^{i\eta}-1))-\exp(\lambda t(e^{-i\eta}-1))\bigg)\cos\bigg(e^{-at}\bigg(x_0-(\beta+\lambda\eta)\bigg(\frac{e^{-at}-1}{a}\bigg)+x\bigg)\bigg)\Bigg]
\\&-p\Bigg\}
\end{aligned}
\end{equation*}
\end{itemize}

\begin{remark}
These examples have been chosen in such a way that we are able to give an analytic form of the reflecting process $K$. This enables us to compare numerically the “true” process $K$ and its empirical approximation $\hat{K}$ . When an exact simulation of the underlying process is available, we
compute the approximation rate of our algorithm.
\end{remark}

\subsection{ Proofs of the numerical illustrations.}
In order to have closed, or almost closed, expression for the compensator $K$ we introduce the process $Y$ solution to the non-reflected SDE
\begin{equation*}
Y_t  =X_0-\int_0^t (\beta_s+a_s Y_{s^-}) ds + \int_0^t (\sigma_s+\gamma_s Y_{s^-}) dB_s  + \int_0^t\int_E c(z)(\eta_s+\theta_s Y_{s^-}) \tilde{N}(ds,dz).
\end{equation*}
By letting $A_s=\int_0^ta_sds$ and applying Itô’s formula on $e^{A_t}X_t$ and $e^{A_t}Y_t$, we get
\begin{equation*}
\begin{aligned}
e^{A_t}X_t&=X_0+\int_0^t e^{A_s}X_s a_s ds+\int_0^t e^{A_s}(-\beta_s-a_s X_{s^-}) ds + \int_0^t e^{A_s}(\sigma_s+\gamma_s X_{s^-}) dB_s  \\&~~~~+ \int_0^t\int_E e^{A_s} c(z)(\eta_s+\theta_s X_{s^-}) \tilde{N}(ds,dz)+\int_0^te^{A_s}dK_s\\&
=X_0-\int_0^t e^{A_s}\beta_s ds + \int_0^t e^{A_s}(\sigma_s+\gamma_s X_{s^-}) dB_s  + \int_0^t\int_E e^{A_s} c(z)(\eta_s+\theta_s X_{s^-}) \tilde{N}(ds,dz)+\int_0^te^{A_s}dK_s,
\end{aligned}
\end{equation*}
in the same way,
\begin{equation*}
\begin{aligned}
e^{A_t}Y_t=X_0-\int_0^t e^{A_s}\beta_s ds + \int_0^t e^{A_s}(\sigma_s+\gamma_s Y_{s^-}) dB_s  + \int_0^t\int_E e^{A_s} c(z)(\eta_s+\theta_s Y_{s^-}) \tilde{N}(ds,dz),
\end{aligned}
\end{equation*}
and so
\begin{equation*}
\begin{aligned}
X_t=Y_t+e^{-A_t}\int_0^te^{A_s}dK_s+ e^{-A_t}\int_0^t e^{A_s}\gamma_s (X_{s^-}+Y_{s^-}) dB_s  +e^{-A_t} \int_0^t\int_E e^{A_s} c(z)\theta_s(X_{s^-}+Y_{s^-}) \tilde{N}(ds,dz).
\end{aligned}
\end{equation*}

\begin{remark}
\label{E(Y)}
In all of cases, we have $a_t=a$ i.e. $A_t=at$, so we get
\begin{equation*}
\begin{aligned}
\e[Y_t]&=\e\bigg[e^{-at}\bigg(x_0-\int_0^t e^{as}\beta ds+ \int_0^t e^{as}(\sigma_s+\gamma_s Y_{s^-}) dB_s  + \int_0^t\int_E e^{as} c(z)(\eta_s+\theta_s Y_{s^-}) \tilde{N}(ds,dz)\bigg)\bigg]\\&
=e^{-at}\bigg(x_0-\int_0^t e^{as}\beta ds\bigg)\\&
=e^{-at}\bigg(x_0-\beta \bigg(\frac{e^{at}-1}{a}\bigg)\bigg).
\end{aligned}
\end{equation*}
\end{remark}

\begin{proof}[Proof of assertions (i)]
From Proposition $\ref{propdensite}$ and Remark $\ref{E(Y)}$, we have
\begin{equation*}
\begin{aligned}
k_t&=\beta \bf{1}_{\e(X_t)=p}\\&
=\beta \bf{1}_{\e(Y_t)+K_t-p=0}\\&
=\beta \bf{1}_{x_0-\beta t+K_t-p=0},
\end{aligned}
\end{equation*}
so, we obtain that
\begin{equation*}
\begin{aligned}
K_t&=\int_0^t k_t dt\\&
=\int_0^t \beta \bf{1}_{K_t=p+\beta t-x_0} dt,
\end{aligned}
\end{equation*}
and as $K_t\geq 0$, we conclude that
$$K_t=(p+\beta t-x_0)^+.$$
Next, we have 
$$f(z)=\frac{1}{\sqrt{2\pi z}}\exp\bigg(-\frac{(\ln z)^2}{2}\bigg),$$
the density function of a lognormal random variable,
so we can obtain 
$$\int_E \eta z \lambda(dz)=\lambda\eta\int_E z f(z) dz=\lambda\eta \e(\xi)$$
where $\xi\sim lognormal(0,1)$,
and we conclude that 
$$\int_E \eta z \lambda(dz)=\lambda\eta\sqrt{e}.$$
Finally, we deduce the exact solution
$$X_t=X_0-(\beta+\lambda\sqrt{e}) t+\sigma B_t+\sum_{i=0}^{N_t}\eta \xi_i+K_t,$$
where $N_t\sim \mathcal{P}(\lambda t)$ and $\xi_i\sim lognormal(0,1).$
\end{proof}

\begin{proof}[Proof of assertions (ii)]
In this case, and using the same Proposition and Remark, we have
\begin{equation*}
\begin{aligned}
k_t&=(\e(-aX_t))^- \bf{1}_{\e(X_t)=p},
\end{aligned}
\end{equation*}
which
\begin{equation*}
\begin{aligned}
\e(X_t)=p&\Longleftrightarrow \e(Y_t)-p+e^{-at}\int_0^t e^{as}dK_s=0\\&
\Longleftrightarrow -x_0e^{-at}+p=e^{-at}\int_0^t e^{as}dK_s\\&
\Longleftrightarrow K_s=ap,
\end{aligned}
\end{equation*}
and 
\begin{equation*}
\begin{aligned}
K_t\geq0&\Longleftrightarrow -x_0e^{-at}+p\geq0\\&
\Longleftrightarrow e^{-at}\leq\frac{p}{x_0}\\&
\Longleftrightarrow t\geq\frac{1}{a}(\ln(x_0)-\ln(p)):=t^*.
\end{aligned}
\end{equation*}
So, we conclude that
$K_t=ap(t-t^*)1_{t\geq t^*}$, where $t^*=\frac{1}{a}(\ln(x_0)-\ln(p))$.\\
Next, by the definition of the process $Y_t$ in this case,
\begin{equation*}
dY_t  =-a Y_{t^-} dt + \gamma Y_{t^-} dB_s  + \theta Y_{t^-} d\tilde{N}_t,
\end{equation*}
we have 
$$Y_t=X_0\exp\Big(-(a+\gamma^2/2+\lambda\theta)t+\gamma B_t\Big)(1+\theta)^{N_t}.$$
Thanks to It\^o formula we get
\begin{equation*}
\begin{aligned}
d\bigg(\frac{1}{Y_t}\bigg)&=-\frac{1}{Y_t^2}dY_t+\frac{1}{2}\bigg(\frac{2}{Y_t^3}\bigg)\gamma^2Y_t^2dt+d\sum_{s\leq t}\bigg(\frac{1}{Y_{s^-}+\Delta Y_s}-\frac{1}{Y_{s^-}}+\frac{1}{Y_{s^-}^2}\Delta Y_s\bigg)\\&
=\frac{a}{Y_t}dt-\frac{\gamma}{Y_t}dB_t-\frac{\theta}{Y_{t^-}}d\tilde{N}_t+\frac{\gamma^2}{Y_t}dt+d\sum_{s\leq t}\bigg(\frac{1}{(1+\theta)Y_{s^-}}-\frac{1}{Y_{s^-}}+\frac{\theta}{Y_{s^-}}\bigg),
\end{aligned}
\end{equation*}
and so
\begin{equation*}
\begin{aligned}
dY_t^{-1}&
=(a+\gamma^2)Y_t^{-1}dt-\gamma Y_t^{-1}dB_t-\theta Y_{t^-}^{-1}d\tilde{N}_t+\bigg(\frac{\theta^2}{1+\theta}\bigg)d\sum_{s\leq t}Y_{s^-}^{-1}\\&
=\bigg(a+\gamma^2+\frac{\lambda\theta^2}{1+\theta}\bigg)Y_t^{-1}dt-\gamma Y_t^{-1}dB_t-\bigg(\frac{\theta}{1+\theta}\bigg) Y_{t^-}^{-1}d\tilde{N}_t.
\end{aligned}
\end{equation*}
Then, using integration by parts formula, we obtain
\begin{equation*}
\begin{aligned}
d(X_t Y_t^{-1})&
=X_{t^-} dY_t^{-1}+Y_{t^-}^{-1}dX_t+d[X,Y^{-1}]_t \\&
=(a+\gamma^2)X_tY_t^{-1}dt-\gamma X_tY_t^{-1}dB_t-\theta X_{t^-}Y_{t^-}^{-1}d\tilde{N}_t+\bigg(\frac{\theta^2}{1+\theta}\bigg)d\sum_{s\leq t}X_{s^-}Y_{s^-}^{-1}\\&~~~~-aX_t Y_t^{-1}dt+\gamma X_t Y_t^{-1}dB_t+\theta X_{t^-} Y_{t^-}^{-1}d\tilde{N}_t+Y_t^{-1}dK_t\\&~~~~-\gamma^2X_tY_t^{-1}dt-\bigg(\frac{\theta^2}{1+\theta}\bigg)d\sum_{s\leq t}X_{s^-}Y_{s^-}^{-1}\\&
=Y_t^{-1}dK_t.
\end{aligned}
\end{equation*}
Finally, we deduce that 
$$X_t=Y_t+Y_t\int_0^tY_s^{-1}dK_s.$$
\end{proof}

\begin{proof}[Proof of assertions (iii)]
In that case, we have
\begin{equation*}
\begin{aligned}
Y_t&=e^{-at}\bigg(x_0-\beta \bigg(\frac{e^{at}-1}{a}\bigg)\bigg)+ \sigma_s e^{-at}\int_0^t e^{as}dB_s  + e^{-at}\int_0^t \eta_s e^{as}  d\tilde{N}_s
\\&=e^{-at}\bigg(x_0-(\beta+\lambda\eta) \bigg(\frac{e^{at}-1}{a}\bigg)\bigg)+ \sigma_s e^{-at}\int_0^t e^{as}dB_s  + e^{-at}\int_0^t \eta_s e^{as}  dN_s
\\&:=f_t+G_t+F_t,
\end{aligned}
\end{equation*}
and
$$X_t=Y_t+e^{-at}\bar{K}_t,~~~~\bar{K}_t=\int_0^te^{as}dK_s.$$
Hence
\begin{equation*}
\begin{aligned}
h(X_t)&=Y_t+e^{-at}\bar{K}_t+\alpha\sin(Y_t+e^{-at}\bar{K}_t)-p\\&
=Y_t+e^{-at}\bar{K}_t+\alpha\Big(\sin(Y_t)\cos(e^{-at}\bar{K}_t)+\cos(Y_t)\sin(e^{-at}\bar{K}_t)\Big)-p\\&
=Y_t+e^{-at}\bar{K}_t+\alpha\Big[\cos(e^{-at}\bar{K}_t)\Big\{\sin(f_t)\cos(G_t)\cos(F_t)+\cos(f_t)\sin(G_t)\cos(F_t)\\&~~~~+\cos(f_t)\cos(G_t)\sin(F_t)-\sin(f_t)\sin(G_t)\sin(F_t)\Big\}+\sin(e^{-at}\bar{K}_t)\Big\{\cos(f_t)\cos(G_t)\cos(F_t)\\&~~~~-\sin(f_t)\sin(G_t)\sin(F_t)-\sin(f_t)\cos(G_t)\sin(F_t)-\cos(f_t)\sin(G_t)\sin(F_t)\Big\}\Big]-p.\\&
\end{aligned}
\end{equation*}
On one side, since $G_t$ is a centered gaussian random variable with variance $V$ given by
\begin{equation*}
	V=\sigma^2\frac{1-e^{-2at}}{2a}=\sigma^2e^{-at}\frac{\sinh(at)}{a},
\end{equation*} 
we obtain that $$\e[e^{iG_t}]=e^{-V/2},$$
$$\e[\sin(G_t)]=\e\bigg[\frac{e^{iG_t}-e^{-iG_t}}{2i}\bigg]=0,$$
and
$$\e[\cos(G_t)]=\e\bigg[\frac{e^{iG_t}+e^{-iG_t}}{2}\bigg]=\e(e^{iG_t})=\exp\bigg(-e^{-at}\frac{\sigma^2}{2a}\sinh(at)\bigg)=:g(t).$$

On the other side, 
\begin{equation*}
\begin{aligned}
\e[e^{iF_t}]=\e\bigg[\exp\bigg(i\eta e^{-at}\int_0^t e^{as}dN_s\bigg)\bigg],
\end{aligned}
\end{equation*}
by taking \textquoteleft $a$\textquoteright small, we get
\begin{equation*}
\begin{aligned}
\e[e^{iF_t}]&\approx\e\bigg[\exp\bigg(i\eta \int_0^t dN_s\bigg)\bigg]\\&
\approx\e\Big[\exp\Big(i\eta N_t\Big)\Big]\\&
\approx\exp\Big(\lambda  t(e^{i\eta}-1)\Big),
\end{aligned}
\end{equation*}
and so
\begin{equation*}
\e[\sin(F_t)]\approx\frac{\exp\Big(\lambda  t(e^{i\eta}-1)\Big)-\exp\Big(\lambda  t(e^{-i\eta}-1)\Big)}{2i}=:m(t),
\end{equation*}

\begin{equation*}
\e[\cos(F_t)]\approx\frac{\exp\Big(\lambda  t(e^{i\eta}-1)\Big)+\exp\Big(\lambda  t(e^{-i\eta}-1)\Big)}{2}=:n(t).
\end{equation*}

Using Remark \ref{E(Y)}, we conclude that, for small \textquoteleft $a$\textquoteright,
\begin{equation*}
\begin{aligned}
\e[h(X_t)]&\approx\e[Y_t]+e^{-at}\bar{K}_t+\alpha\Big(g(t)m(t)\cos(f_t+e^{-at}\bar{K}_t)+g(t)n(t)\sin(f_t+e^{-at}\bar{K}_t)\Big)-p\\&:=F_t(\bar{K}_t).
\end{aligned}
\end{equation*}
 
Therefore,
$$\bar{K}_t=\sup_{s\leq t}\Big(F_s^{-1}(0)\Big)^+~~~~and~~~~ dK_t=e^{-at}d\sup_{s\leq t}\Big(F_s^{-1}(0)\Big)^+.$$
\end{proof}

\subsection{Illustrations.}

This computation works as follows. Let $0 = T_0 < T_1 <\cdots< T_n = T $ be
a subdivision of $[0, T ]$ of step size $T/n$, $n$ being a positive integer, let $X$ be the unique solution of the MRSDE \eqref{eq-exacte} and let, for a given $i$, $(\tilde{X}^i_{T_k})_{0\leq k\leq n}$ be its numerical approximation given by Algorithm 1. For a given integer $L$, we draw $(\bar{X}^l)_{0\leq l\leq L}$ and $(\tilde{X}^{i,l})_{0\leq l\leq L}$, $L$ independent copies of $X$ and $\tilde{X}^i$. We then approximate the $\mathbb{L}^2$-error of Theorem~\ref{cv} by:
\begin{equation*}
\hat{E}=\frac{1}{L}\sum_{l=1}^{L}\max_{0\leq k\leq n}\left|\bar{X}^l_{T_k}-\tilde{X}^{i,l}_{T_k}\right|^2.
\end{equation*}

\begin{figure}[h!]
  \centering
	\includegraphics[scale=0.5]{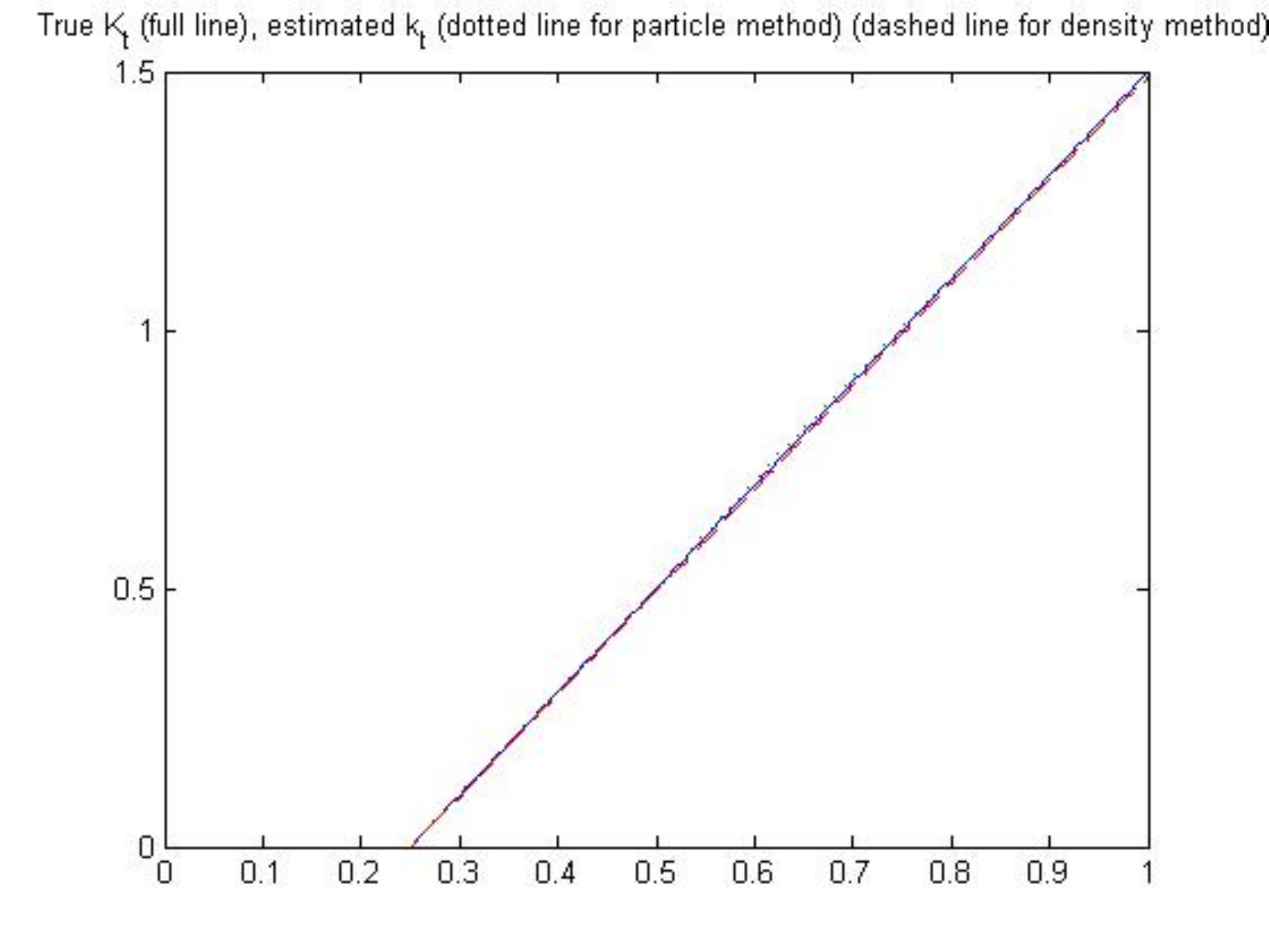}
	\caption{Case (i). $n = 500$, $N = 100000$, $T = 1$, $\beta = 2$, $\sigma = 1$, $\lambda=5$, $x_0 =1$, $p = 1/2$.}
	\label{fig:un}
	
\end{figure}

%

\begin{figure}[h!]
	\label{fig:deux}
  \centering
  \includegraphics[scale=0.5]{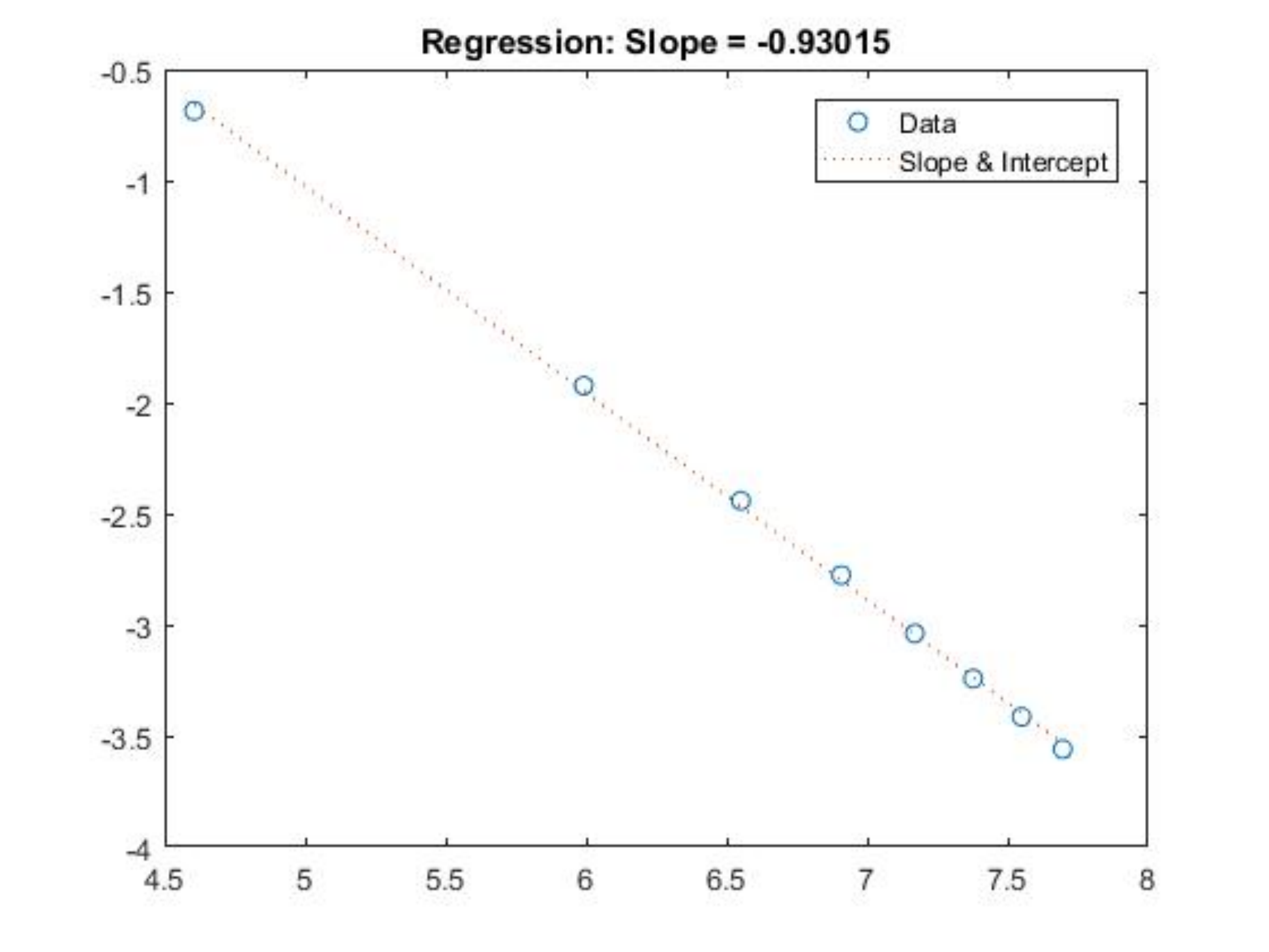}
  \caption{Case (i). Regression of $\log(\hat{E})$ w.r.t. $\log(N)$. Data: $\hat{E}$ when $N$ varies from $100$ to $2200$ with step size $300$. Parameters: $n = 100$, $T = 1$, $\beta =2$, $\sigma= 1$, $\lambda=5$, $x_0 = 1$, $p = 1/2$, $L = 1000$.}
\end{figure}

\begin{figure}[h!]
	 \label{fig:trois}
	\centering
	\includegraphics[scale=0.5]{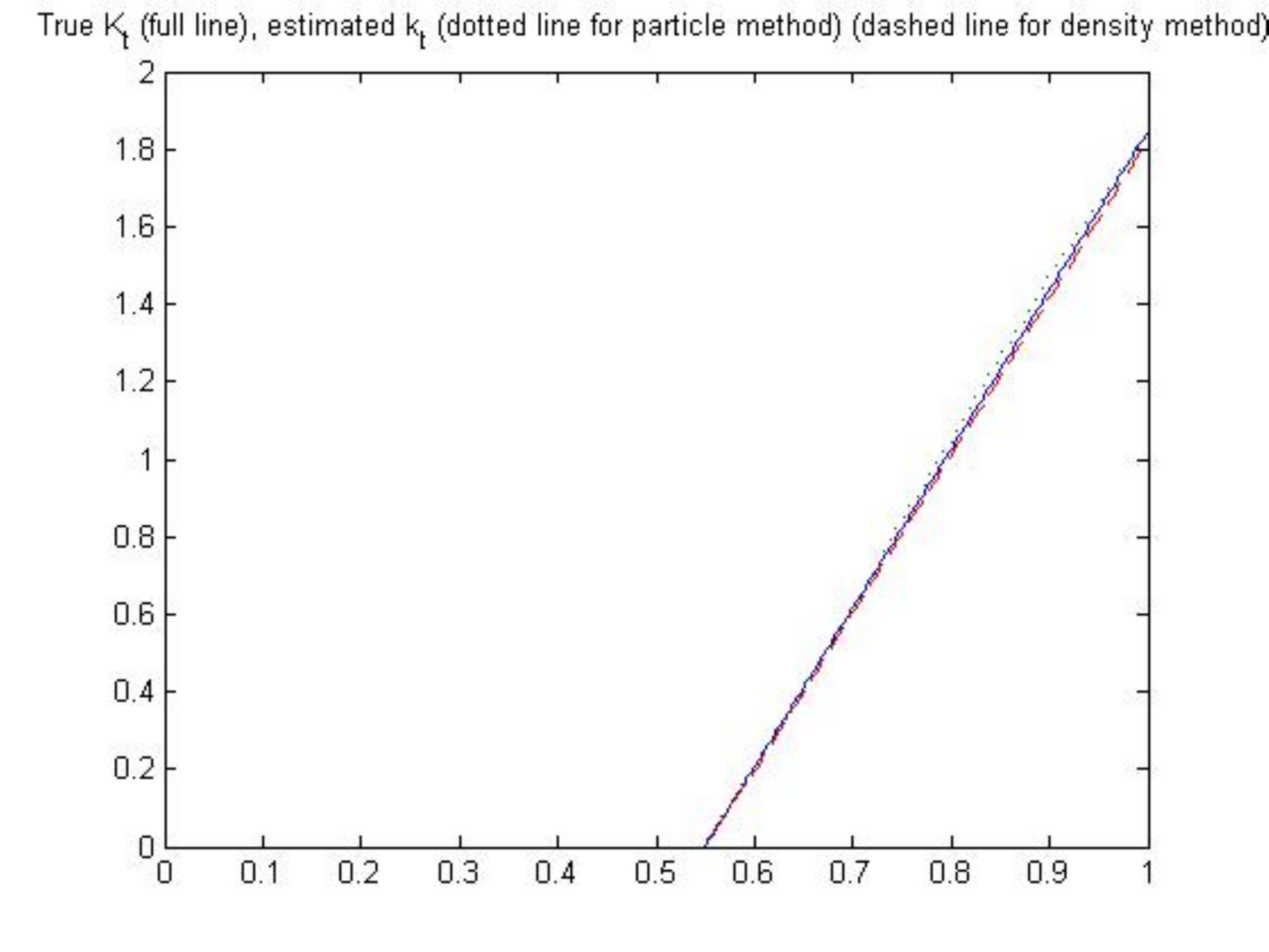}
	\caption{Case (ii). Parameters: $n = 500$, $N = 10000$, $T = 1$, $\beta=0$, $a = 3$, $\gamma=1$, $\eta =1$, $\lambda=2$, $x_0 = 4$, $p = 1$.}
\end{figure}

\begin{figure}[h!]
	\label{fig:quatre}
  \centering
  \includegraphics[scale=0.5]{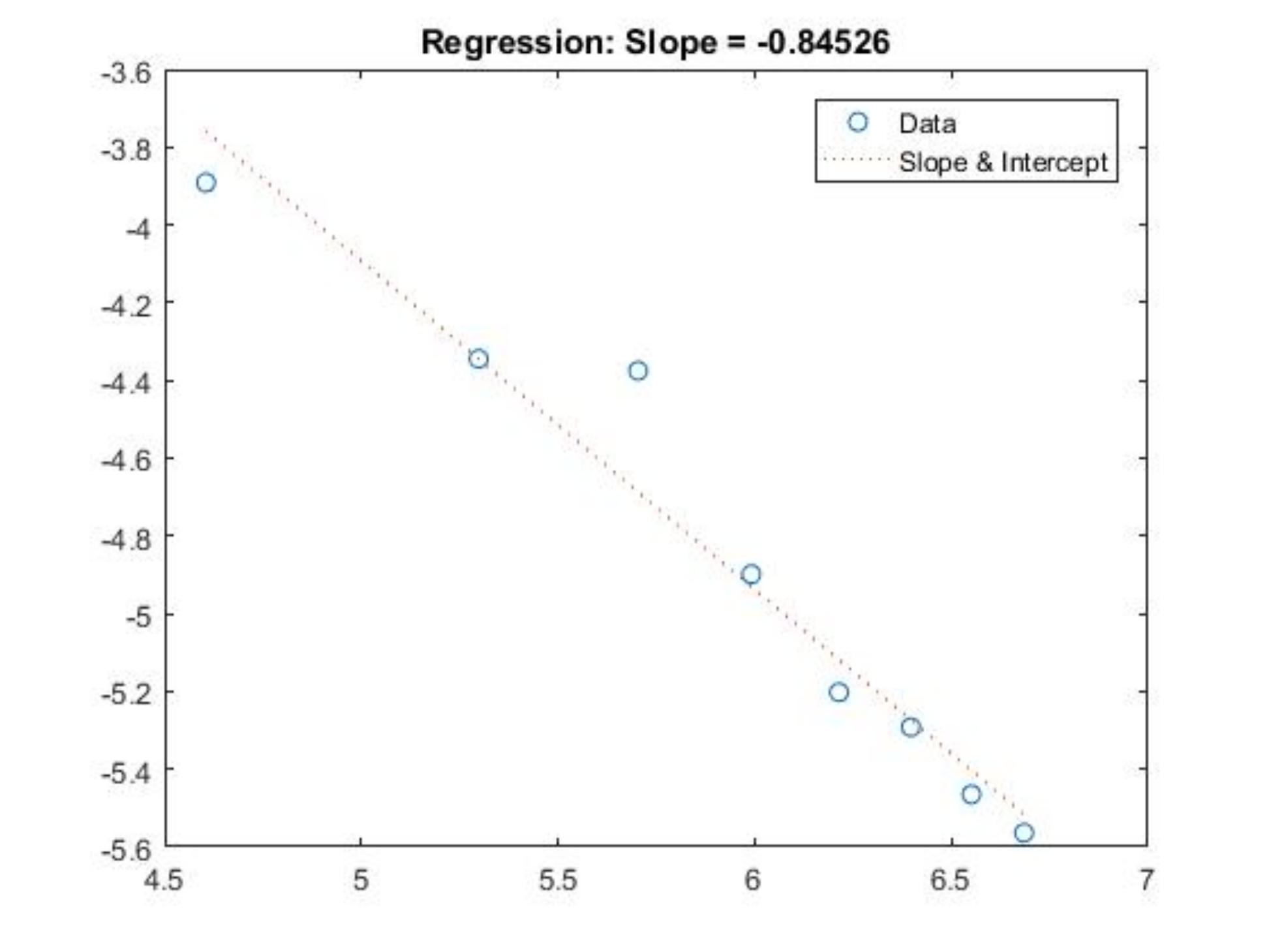}
  \caption{Case (ii). Regression of $\log(\hat{E})$ w.r.t. $\log(N)$. Data: $\hat{E}$ when $N$ varies from $100$ to $800$ with step size $100$. Parameters: $n = 1000$, $T = 1$, $\beta =0$, $a=3$, $\gamma=1$, $\eta= 1$, $\lambda=2$, $x_0 = 4$, $p = 1$, $L = 1000$.}
\end{figure}

\begin{figure}[h!]
	\label{fig:cinq}
	\centering
	\includegraphics[scale=0.5]{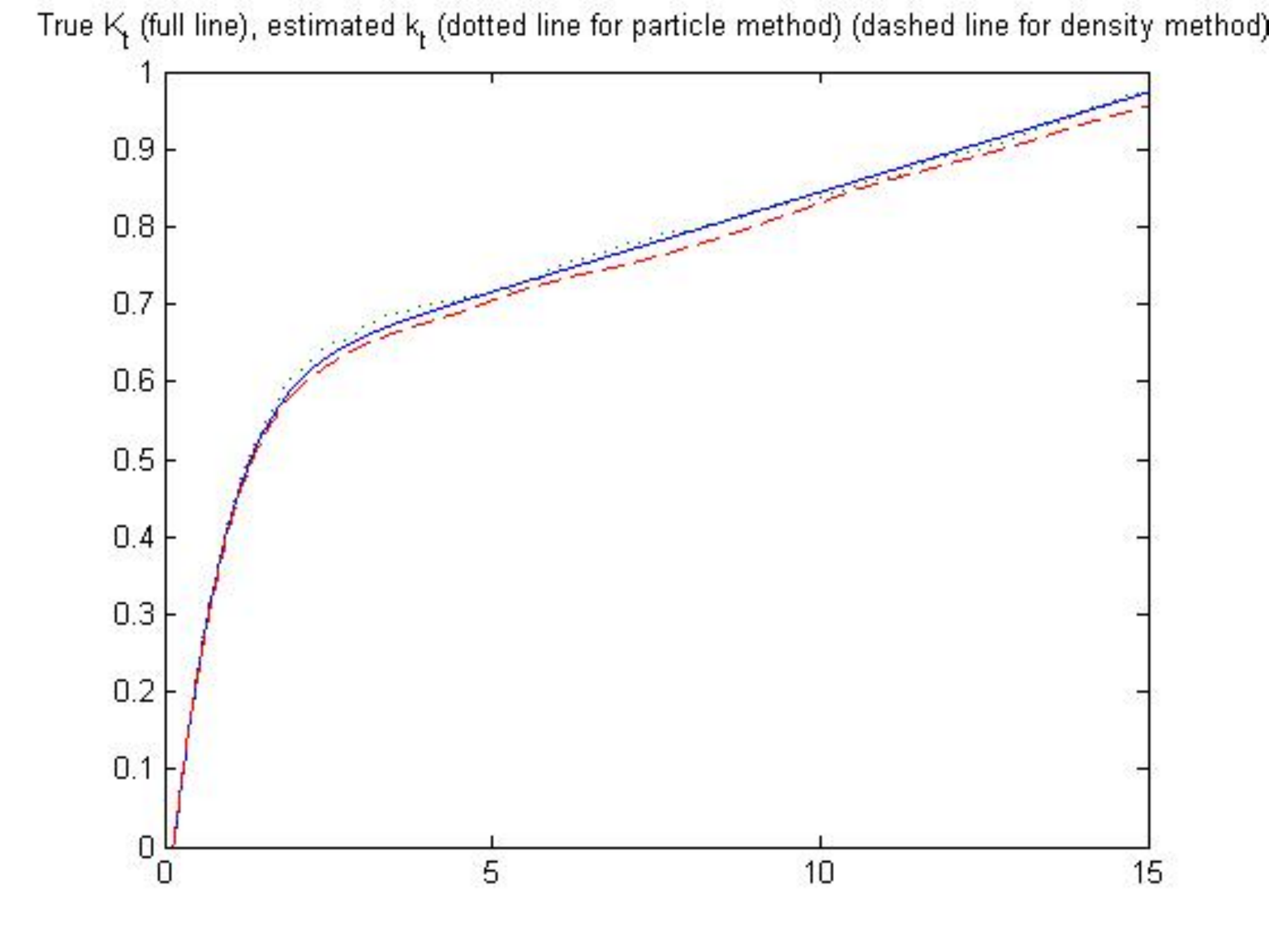}
	\caption{Case (iii). Parameters: $n = 1000$, $N = 100000$, $T = 15$, $\beta =10^{-2}$, $\sigma = 1$, $p =\pi/2$, $\alpha = 0.9$, $a=10^{-2}$,  $x_0$ is the unique solution of $x+\alpha\sin (x)-p=0$ plus $10^{-1}$.}
\end{figure}

\appendix

\section{Appendices}
\subsection{Proof of Lemma \ref{lemmacont_generale}}
Let $s$ and $t$ in $[0,T]$ such that $s\leq t$.\\
Firstly, we suppose that $\varphi$ is a continuous function with compact support. In this case, there exists a sequence of Lipschitz continuous functions $\varphi_n$ with compact support which converges uniformly to $\varphi$.
Therefore, by using Proposition \ref{propriete_2}, we get
\begin{equation*}
\begin{aligned}
|\e[\varphi(X_t)]-\e[\varphi(X_s)]|&\leq |\e[\varphi(X_t)]-\e[\varphi_n(X_t)]|+|\e[\varphi_n(X_t)]-\e[\varphi_n(X_s)]|+|\e[\varphi_n(X_s)]-\e[\varphi(X_s)]|\\&
\leq \e[|(\varphi-\varphi_n)(X_t)|]+C_n\e[|X_t-X_s|]+|\e[(\varphi_n-\varphi)(X_s)]|\\&
\leq 2\e[\parallel\varphi_n-\varphi\parallel_\infty]+C_n(\e[|X_t-X_s|^2])^{1/2}\\&
\leq 2\e[\parallel\varphi_n-\varphi\parallel_\infty]+C_n|t-s|^{1/2}.
\end{aligned}
\end{equation*}
Thus, we obtain that
$$\limsup_{t\rightarrow s}|\e[\varphi(X_t)]-\e[\varphi(X_s)]|\leq 2\e[\parallel\varphi_n-\varphi\parallel_\infty].$$
This result is true for all $n\geq1$, so we deduce that
$$\limsup_{t\rightarrow s}|\e[\varphi(X_t)]-\e[\varphi(X_s)]|=0,$$
then we conclude the continuity of the function $t\longmapsto\e [\varphi(X_t)]$.\\
\\
Secondly, we consider the case that $\varphi$ is a continuous function such that $$\forall x\in\mathbb{R}, \exists C\in\mathbb{R},~~\varphi(x)\leq C(1+|x|^p).$$
We define a sequence of functions $\varphi_n$, such that for all $n\geq1$ and $x\in\mathbb{R}$, 
$$
\varphi_n(x)=\varphi(x)\theta_n(x)
$$ 
with $\theta_n$ smooth such that 
\begin{equation*}
    \theta_n(x)=
    \begin{cases}
      1 & \text{if}\ |x|\leq n \\
      0 & \text{if}\ |x|> n+1
    \end{cases}
  \end{equation*}
 Based on this definition, $\varphi_n$ is a continuous function with compact support. Then we get 
 \begin{align*}
 |\e[\varphi(X_t)]-\e[\varphi(X_s)]| & 
 \leq \e\left[\left|\varphi-\varphi_n\right|(X_t)\right]  + \left|\e[\varphi_n(X_t)]-\e[\varphi_n(X_s)]\right|  + \e\left[\left|\varphi-\varphi_n\right|(X_s)\right]\\
 & \leq \e\left[2|\varphi(X_t)|\ind_{|X_t|> n}\right]+\left|\e[\varphi_n(X_t)]-\e[\varphi_n(X_s)]\right|+ \e\left[ 2|\varphi(X_s)|\ind_{|X_s|> n}\right] \\
 & \leq C\e\left[ \left(1+\sup_{t\leq T}|X_t|^p\right)\ind_{\sup_{t\leq T}|X_t|> n}\right]+\left|\e[\varphi_n(X_t)]-\e[\varphi_n(X_s)]\right|.
 \end{align*}
  Thus, by using the first part of this Lemma, we obtain that
  $$
	\limsup_{t\rightarrow s}|\e[\varphi(X_t)]-\e[\varphi(X_s)]|\leq C\e\left[\left(1+\sup_{t\leq T}|X_t|^p\right)\ind_{\sup_{t\leq T}|X_t|> n}\right].
	$$
  This result is true for all $n\geq1$, then by using the dominated convergence theorem, we deduce that
  $$
	\limsup_{t\rightarrow s}|\e[\varphi(X_t)]-\e[\varphi(X_s)]|=0,
	$$
  and we conclude the continuity of the function $t\longmapsto\e [\varphi(X_t)]$.


\begin{thebibliography}{BCdRGL16}

\bibitem[ADEH99]{ADEH99}
Philippe Artzner, Freddy Delbaen, Jean-Marc Eber, and David Heath.
\newblock Coherent measures of risk.
\newblock {\em Math. Finance}, 9(3):203--228, 1999.

\bibitem[BCdRGL16]{BCGL17}
Philippe Briand, Paul-Eric Chaudru~de Raynal, Arnaud Guillin, and Céline
  Labart.
\newblock Particles systems and numerical schemes for mean reflected stochastic
  differential equations.
\newblock {\em arXiv:1612.06886}, 2016.
\newblock Submitted.

\bibitem[BEH18]{BEH16}
Philippe Briand, Romuald Elie, and Ying Hu.
\newblock {BSDE}s with mean reflexion.
\newblock {\em Ann. Appl. Probab.}, 28(1):482--510, 2018.

\bibitem[CD18a]{CD18a}
R.~Carmona and F.~Delarue.
\newblock {\em Probabilistic Theory of Mean Field Games with Applications I}.
\newblock Springer, 2018.

\bibitem[CD18b]{CD18b}
R.~Carmona and F.~Delarue.
\newblock {\em Probabilistic Theory of Mean Field Games with Applications II}.
\newblock Springer, 2018.

\bibitem[CM08]{CM08}
St{\'e}phane Cr{\'e}pey and Anis Matoussi.
\newblock Reflected and doubly reflected {BSDE}s with jumps: a priori estimates
  and comparison.
\newblock {\em Ann. Appl. Probab.}, 18(5):2041--2069, 2008.

\bibitem[DL16a]{DL16a}
R.~Dumitrescu and C.~Labart.
\newblock Numerical approximation of doubly reflected bsdes with jumps and rcll
  obstacles.
\newblock {\em Journal of Math. Anal. and Appl.}, 442(1):206--243, 2016.

\bibitem[DL16b]{DL16b}
R.~Dumitrescu and C.~Labart.
\newblock Reflected scheme for doubly reflected bsdes with jumps and rcll
  obstacles.
\newblock {\em Journal of Comput. and Appl. Math.}, 296:827--839, 2016.

\bibitem[EHO05]{EHO05}
E.H. Essaky, N.~Harraj, and Y.~Ouknine.
\newblock Backward stochastic differential equation with two reflecting
  barriers and jumps.
\newblock {\em Stochastic Analysis and Applications}, 23:921--938, 2005.

\bibitem[Ess08]{Ess08}
E.H. Essaky.
\newblock Reflected backward stochastic differential equation with jumps and
  {RCLL} obstacle.
\newblock {\em Bulletin des Sciences Mathématiques}, (132):690--710, 2008.

\bibitem[FG15]{FG15}
Nicolas Fournier and Arnaud Guillin.
\newblock On the rate of convergence in wasserstein distance of the empirical
  measure.
\newblock {\em Probab. Theory Related Fields}, (162(3-4)):707–--738, 2015.

\bibitem[FS02]{FS02}
Hans F{\"o}llmer and Alexander Schied.
\newblock Convex measures of risk and trading constraints.
\newblock {\em Finance Stoch.}, 6(4):429--447, 2002.

\bibitem[HH06]{HH06}
S.~Hamad\`ene and M.~Hassani.
\newblock {BSDE}s with two reacting barriers driven by a {B}rownian motion and
  an independent {P}oisson noise and related {D}ynkin game.
\newblock {\em Electronic Journal of Probability}, 11:121--145, 2006.

\bibitem[HO03]{HO03}
S.~Hamad\`ene and Y.~Ouknine.
\newblock Reflected backward stochastic differential equation with jumps and
  random obstable.
\newblock {\em Elec. Journ. of Prob.}, 8:1--20, 2003.

\bibitem[KH92]{KH92}
Arturo Kohatsu-Higa.
\newblock Reflecting stochastic differential equations with jumps.
\newblock Technical report, 1992.

\bibitem[Lep95]{Lep95}
D.~Lepingle.
\newblock Euler scheme for reflected stochastic differential equations.
\newblock {\em Math. Comput. Simulation}, 38:119--126, 1995.

\bibitem[LL06a]{LL06b}
J.-M. Lasry and P.-L. Lions.
\newblock Jeux à champ moyen. i. le cas stationnaire.
\newblock {\em C.R. Math. Acad. Sci. Paris}, 343(9):619--625, 2006.

\bibitem[LL06b]{LL06a}
J.-M. Lasry and P.-L. Lions.
\newblock Jeux à champ moyen. ii. horizon fini et contrôle optimal.
\newblock {\em C.R. Math. Acad. Sci. Paris}, 343(10):679--684, 2006.

\bibitem[LL07a]{LL07a}
J.-M. Lasry and P.-L. Lions.
\newblock Large investor trading impacts on volatility.
\newblock {\em Ann. Inst. H. Poincaré, Anal. Non Linéaire}, 24(2):311--323,
  2007.

\bibitem[LL07b]{LL07b}
J.-M. Lasry and P.-L. Lions.
\newblock Mean field games.
\newblock {\em Japanese Journal Math.}, 2:229--260, 2007.

\bibitem[MR85]{MR85}
José-Luis Menaldi and Maurice Robin.
\newblock Reflected diffusion processes with jumps.
\newblock {\em The Annals of Probability}, 13(2):319--341, 1985.

\bibitem[Pet95]{Pet95}
R.~Petterson.
\newblock Approximations for stochastic differential equations with reflecting
  convex boundaries.
\newblock {\em Stoc. Proc. Appl}, 59(2):295--308, 1995.

\bibitem[Pet97]{Pet97}
R.~Petterson.
\newblock Penalization schemes for reflecting stochastic differential
  equations.
\newblock {\em Bernoulli}, 3(4):403--414, 1997.

\bibitem[QS14]{QS14}
M.C. Quenez and A.~Sulem.
\newblock Reflected {BSDE}s and robust optimal stopping for dynamic risk
  measures with jumps.
\newblock {\em Stoc. Proc. Appl.}, (124(9)):3031--3054, 2014.

\bibitem[Sko61]{Sko61}
A.V. Skorokhod.
\newblock Stochastic equations for diffusion processes in a bounded region.
\newblock {\em Theory Probab. Appl}, 6:264--274, 1961.

\bibitem[Slo94]{Slo94}
L.~Slominski.
\newblock On approximation of solutions of multidimensional sdes with
  reflecting boundary conditions.
\newblock {\em Stoc. Proc. Appl}, 50(2):197--219, 1994.

\bibitem[Slo01]{Slo01}
L.~Slominski.
\newblock Euler's approximations of solutions of sdes with reflecting boundary.
\newblock {\em Stoc. Proc. Appl}, 92(2):317--337, 2001.

\bibitem[STR98]{RR98}
Ludger~Rüschendorf S.~T.~Rachev.
\newblock Mass transportation problems. {Vol}. 1: {Theory}. {Vol}. 2:
  {Applications}.
\newblock 1998.

\bibitem[YS12]{YS12}
Hui Yu and Minghui Song.
\newblock Numerical solutions of stochastic differential equations driven by
  poisson random measure with non-lipschitz coefficients.
\newblock {\em Journal of Applied Mathematics}, 2012, 2012.

\end{thebibliography}

\end{document}